\DeclareFontFamily{U}{wncyr}{}
\DeclareFontShape{U}{wncyr}{m}{n}{<->wncyr10}{}
\DeclareFontShape{U}{wncyr}{m}{it}{<->wncyi10}{}
\DeclareFontShape{U}{wncyr}{m}{sc}{<->wncysc10}{}
\DeclareFontShape{U}{wncyr}{b}{n}{<->wncyb10}{}
\DeclareTextCommand{\guillemotleft}{T1}{%
  {\fontencoding{U}\fontfamily{wncyr}\selectfont\symbol{"3C}}%
}
\DeclareTextCommand{\guillemotright}{T1}{%
  {\fontencoding{U}\fontfamily{wncyr}\selectfont\symbol{"3E}}%
}
\newtheorem{theorem}{Theorem}[section]
\newtheorem{proposition}[theorem]{Proposition}
\newtheorem{corollary}[theorem]{Corollary}
\newtheorem{lemma}[theorem]{Lemma}
\theoremstyle{definition}
\newtheorem{definition}[theorem]{Definition}
\theoremstyle{remark}
\newtheorem{remark}[theorem]{Remark}
\numberwithin{equation}{section}
\DeclareRobustCommand{\SkipTocEntry}[5]{}
\def\longrightharpoonup{-\negthickspace \! \rightharpoonup}
\begin{document}

\title[Almost Sure Existence of Global Solutions for Wave equations]{Almost Sure Existence of Global Solutions for Supercritical Semilinear Wave Equations}

\author{Mickaël Latocca}
\address{Département de Mathématiques et Applications, Ecole Normale Supérieure, 45 rue d'Ulm 75005 Paris, France}
\email{mickael.latocca@ens.fr}
\subjclass[2010]{Primary 35L05, 35L15, 35L71}

\date{\today}

\maketitle

\begin{abstract}
\begin{sloppypar}We prove that for almost every initial data $(u_0,u_1) \in H^s \times H^{s-1}$ with $s > \frac{p-3}{p-1}$ there exists a global weak solution to the supercritical semilinear wave equation ${\partial _t^2u - \Delta u +|u|^{p-1}u=0}$ where $p>5$, in both $\mathbb{R}^3$ and $\mathbb{T}^3$. This improves in a probabilistic framework the classical result of Strauss~\cite{strauss} who proved global existence of weak solutions associated to $H^1 \times L^2$ initial data. The proof relies on techniques introduced by Oh and Pocovnicu in \cite{ohPocovnicu} based on the pioneer work of Burq and Tzvetkov in \cite{burqTzvetkov2}. We also improve the global well-posedness result in \cite{sunXia} for the subcritical regime $p<5$ to the endpoint $s=\frac{p-3}{p-1}$. 
\end{sloppypar}
\end{abstract}
\tableofcontents 
\section{Introduction}

\subsection{Supercritical semilinear wave equations} We consider the Cauchy problem for the energy-supercritical defocusing semilinear equation in dimension $3$, that is for $p > 5$ and $s \geqslant 0$:
\begin{equation}
\tag{SLW$_p$}
\label{1SLW}
    \left \{
    \begin{array}{cc}
         \partial_t^2 u - \Delta u +|u|^{p-1}u=0 \\
         (u(0), \partial _t u (0))=(u_0,u_1) \in H^s(U) \times H^{s-1}(U) \,, 
    \end{array}
    \right.
\end{equation}
where $u(t)$ is a real-valued function defined on $U = \mathbb{R}^{3}$ or $\mathbb{T}^3$. We also consider the associate linear wave equation: 
\begin{equation}
\tag{LW}
\label{1LW}
    \left \{
    \begin{array}{cc}
         \partial_t^2 z - \Delta z = 0 \\
         (z(0), \partial _t z (0))=(z_0,z_1) \in H^s(U) \times H^{s-1}(U)\,.  
    \end{array}
    \right.
\end{equation}
The formal conserved energy for a solution $u$ to \eqref{1SLW} is \[E(u(t)) := \int_{U} \left( \frac{|\partial _t u (t,x)|^2}{2} + \frac{|\nabla u (t,x)|^2}{2} + \frac{|u(t,x)|^{p+1}}{p+1}\right) \, \mathrm{d}x \,.\]
Moreover~\eqref{1SLW} is known to be invariant under the dilation symmetry \[u(t,x) \mapsto u_{\lambda} (t,x) = \lambda ^{\frac{2}{p-1}}u(\lambda t,\lambda x) \,.\] A necessary condition for a function $u$ to belong to the \textit{energy space}, \textit{\textit{i.e.}} $E(u(t))< \infty$ is that $(u(0),\partial _t u (0)) \in \dot{H}^1(U) \times L^2(U)$. We observe that 
\begin{align*}
     \|(u_{\lambda}(0),\partial _t u_{\lambda} (0))\|_{\dot{H}^1(U) \times L^2(U)} &= \lambda^{\frac{2}{p-1}-\frac{1}{2}} \|(u(0),\partial _t u (0))\|_{\dot{H}^1(U) \times L^2(U)}\\
    & \underset{\lambda \to 0}{\gg} \|(u(0),\partial _t u (0))\|_{\dot{H}^1(U) \times L^2(U)}\,,
\end{align*}
since $p>5$, which explains why for such $p$, \eqref{1SLW} is called \textit{energy-supercritical}. 

We first recall the classical result of existence of \textit{weak solutions} to~\eqref{1SLW}. 

\begin{theorem}[Strauss, 1970, \cite{strauss}]\label{1straussTheo} Let $f$ be a real smooth function and $F$ be an antiderivative of $f$. Assume that $F(v) \gtrsim - |v|^2$ and  \[\frac{|F(u)|}{|f(u)|} \to \infty \text{ as } |u| \to \infty \,.\] Let $(u_0,u_1) \in H^1 (\mathbb{R}^3) \times L^2(\mathbb{R}^3)$ satisfying \[E(u_0,u_1) := \int_{\mathbb{R}^3} \left( \frac{|u_1|^2}{2} + \frac{|\nabla u_0 |^2}{2} + F(u_0)\right) \, \mathrm{d}x < \infty\,.\]
Then the equation 
\begin{equation}
    \label{1generalWaveEquation}
    \left\{ \begin{array}{ccc}
    \partial_t^2 u - \Delta u + f(u) &=&0  \\
     (u(0),\partial _t u(0)) &=&(u_0,u_1) \in H^1 \times L^2 \,,
\end{array}\right.
\end{equation}
admits a weak solution, that is a distributional solution $u : \mathbb{R} \to H^1(\mathbb{R}^3) \times L^2(\mathbb{R}^3)$ which is weakly continuous in time and such that \[ E(u(t),\partial _t u(t)) \leqslant E(u_0,u_1) \text{ for every } t \in \mathbb{R}\,.\] 
\end{theorem}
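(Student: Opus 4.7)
The plan is an approximation/compactness scheme in the spirit of classical nonlinear evolution theory. First, I would truncate the nonlinearity: set $f_n(u) = f(u)\chi(|u|/n)$ for a smooth cutoff $\chi$, so that $f_n$ is globally Lipschitz. For each $n$, a Picard fixed-point argument in $C_t(H^1_x)\cap C^1_t(L^2_x)$ (possibly after mollifying the data) yields a global smooth solution $u_n$ of $\partial_t^2 u_n - \Delta u_n + f_n(u_n) = 0$. Since the associated antiderivative $F_n$ still satisfies $F_n(v) \gtrsim -|v|^2$ uniformly in $n$, energy conservation for the approximate equation gives a uniform bound on $(u_n, \partial_t u_n)$ in $L^\infty_t(H^1\times L^2)$ and on $F_n(u_n)$ in $L^\infty_t L^1_x$.

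Second, I would extract a weak-$*$ subsequential limit $(u, \partial_t u) \in L^\infty(\mathbb{R}; H^1\times L^2)$. The linear part $\partial_t^2 u - \Delta u$ passes to the limit directly in $\mathcal{D}'$. The crux is identifying the weak limit of $f_n(u_n)$ with $f(u)$. An Aubin--Lions (or Rellich on each slab $[-T,T]\times B_R$) step gives $u_n \to u$ strongly in $L^2_{\mathrm{loc}}$, hence along a further subsequence pointwise a.e., so $f_n(u_n) \to f(u)$ pointwise a.e.

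Third, to upgrade a.e.\ convergence to $L^1_{\mathrm{loc}}$ convergence — which is what the distributional formulation of the equation requires — I would invoke the growth hypothesis $|F(u)|/|f(u)| \to \infty$. This is exactly the de la Vallée--Poussin criterion: the uniform $L^1_{\mathrm{loc}}$ bound on $F_n(u_n)$ together with the hypothesis forces equi-integrability of $\{f_n(u_n)\}$ on bounded sets. Vitali's convergence theorem then yields $f_n(u_n) \to f(u)$ in $L^1_{\mathrm{loc}}$, and I can pass to the limit in the equation.

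Finally, the energy inequality $E(u(t))\leqslant E(u_0,u_1)$ follows from weak lower semicontinuity of the quadratic terms $\tfrac12\|\partial_t u\|_{L^2}^2 + \tfrac12\|\nabla u\|_{L^2}^2$ and from Fatou applied to $F(u_n) + C|u_n|^2 \geqslant 0$, using the a.e.\ convergence and the uniform $L^2$ bound. Weak continuity in time of $u$ into $H^1$ is then standard: from $u\in L^\infty(H^1)$ and $\partial_t u\in L^\infty(L^2)$ one gets continuity into $L^2$, which combined with the $H^1$ bound upgrades to weak continuity into $H^1$. The step I expect to be the main obstacle is the equi-integrability argument in the third paragraph: the growth hypothesis is stated precisely in the form $|F|/|f|\to\infty$ so that de la Vallée--Poussin applies, but care is needed to treat both the truncation parameter $n$ and the pointwise limit simultaneously, which is the only place where the specific structural hypothesis on $F$ enters.
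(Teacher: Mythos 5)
The paper does not actually prove Theorem~\ref{1straussTheo}; it simply recalls Strauss's 1970 result and cites \cite{strauss}. So the comparison can only be against the classical argument, and your reconstruction does follow it in outline: truncate, solve globally via Picard for each truncation, use energy conservation of the approximate system for uniform $L^\infty_t(H^1\times L^2)$ and $L^\infty_t L^1_x(F_n(u_n))$ bounds, extract weak limits, upgrade to a.e.\ convergence via Rellich on slabs, identify the limit of the nonlinearity via the Strauss/de la Vall\'ee--Poussin/Vitali mechanism (which is precisely what the hypothesis $|F|/|f|\to\infty$ feeds), and close with lower semicontinuity for the energy inequality. That is the right skeleton.

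There are, however, two places where the details as you state them do not quite close. First, the specific truncation $f_n(u)=f(u)\chi(|u|/n)$ does \emph{not} obviously preserve the uniform lower bound $F_n(v)\gtrsim -|v|^2$: once $|v|>n$, $F_n(v)=F_n(\operatorname{sign}(v)\cdot 2n)$ is a constant, and to bound it below by $-Cn^2$ with $C$ independent of $n$ you need to control $\int_n^{2n}|f|$, which the hypotheses on $f$ do not directly provide. Strauss's approximation is built more carefully so that the truncated $F_n$ keeps both the lower bound and the growth property $|f_n|\leqslant \varepsilon|F_n|+C_\varepsilon$ \emph{uniformly in $n$}, which is exactly what lets you run the equi-integrability argument and keeps the energy of $u_n$ bounded below; as written, your $\chi$-cutoff does not guarantee this, so the ``uniform bound on $(u_n,\partial_tu_n)$ in $L^\infty_t(H^1\times L^2)$'' is not secured. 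Second, the Fatou step for the energy inequality on $\mathbb{R}^3$ is more delicate than you indicate: Fatou on $F_n(u_n)+C|u_n|^2\geqslant 0$ combined with weak lower semicontinuity of the gradient and time-derivative terms leaves you needing to control $C\|u_n(t)\|_{L^2}^2$ against $C\|u(t)\|_{L^2}^2$, but weak convergence only gives $\|u(t)\|_{L^2}\leqslant\liminf\|u_n(t)\|_{L^2}$, which points the wrong way (mass can escape to spatial infinity). One has to either exploit finite speed of propagation and a density argument by compactly supported data, or re-arrange the energy so the $|u|^2$-correction never appears with the dangerous sign; as stated, the ``Fatou + uniform $L^2$ bound'' sentence is not a proof of the inequality. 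Neither issue is fatal to the strategy, but both require arguments you have not supplied.
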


In the following we will seek for global solutions to \eqref{1SLW} in the following sense. 

\begin{definition}[Weak solutions for \eqref{1SLW}] \label{1weakSolution} A function $u : \mathbb{R} \times U \to \mathbb{R}$ is said to be a \textit{weak solution} to \eqref{1SLW} with initial data $(u_0,u_1)\in H^s(U) \times H^{s-1}(U)$ if and only if one can write $u=z+v$ where $z$ is a strong solution to \eqref{1LW} with initial data $(u_0,u_1)$ and $v$ is such that $(v(0),\partial _t v(0))=(0,0)$ and satisfies for every $T >0$, $v\in H^1((-T,T) \times U) \cap L^{p+1}((-T,T) \times U)$, and for every compactly supported $\varphi \in \mathcal{C}^2(\mathbb{R} \times U)$:
\[ \int_{\mathbb{R}} \int_{U} \left( \partial _t v(t) \partial _t \varphi (t) - \nabla v(t) \cdot \nabla \varphi (t) - (z(t)+v(t)) |z(t)+v(t)|^{p-1} \varphi(t)\right)  \, \mathrm{d}x \, \mathrm{d}t = 0\,.\]
\end{definition}

\begin{remark} Note that this definition differs from the definition of weak solutions in Theorem~\ref{1straussTheo}. In our setting we ask for the solution $u$ to be written in the form $u=z+v$, where $z$ solves the associate linear problem~\eqref{1LW}: the reason why we ask for such a decomposition will appear clearly in the proof of the main result, Theorem~\ref{1mainTheorem}. Note that these weak solutions are \textit{a fortiori} weak solution as in Theorem~\ref{1straussTheo}.  
\end{remark}

In contrast, we recall the notion of strong solution that we will use, slightly different than usual definitions, adapted to a decomposition $u=z+v$. More precisely, we have the following. 

\begin{definition}[Strong solutions for~\eqref{1SLW}]\label{1defStrong} Let $T>0$. A function $u : (-T,T) \times U \to \mathbb{R}$ is said to be a \textit{strong solution} to~\eqref{1SLW} on the time interval $(-T,T)$, with initial data $(u_0,u_1) \in H^s(U) \times H^{s-1}(U)$ if there is a decomposition $u=z+v$ where $(z,\partial _tz) \in \mathcal{C}^0((-T,T),H^s(U) \times H^{s-1}(U))$ is a solution to~\eqref{1LW} associated to $(u_0,u_1)$ and $(v,\partial_t v)$ belongs to $\mathcal{C}^0(\mathbb{R},H^s(U) \cap L^{p+1}(U)) \times \mathcal{C}^0(\mathbb{R},H^{s-1}(U))$ and is a solution to 
\[\partial_t^2v-\Delta v + |z+v|^{p-1}(z+v)=0\,,\]
with initial condition $(v_0,v_1)=(0,0)$. Such a solution is \textit{global} if one can take $T=\infty$. 
\end{definition}

\begin{remark} We remark that these \textit{strong} solutions are such that \[(u,\partial_tu) \in \mathcal{C}^0((-T,T),H^s) \times \mathcal{C}^0((-T,T),H^{s-1})\,,\]
which can be considered to be the usual space for \textit{strong} solutions. We will see that, however, differences appear when one considers the uniqueness of such solutions, see Theorem~\ref{1th2}. 
\end{remark}

\subsection{Previous works on probabilistic well-posedness for wave equations} Our purpose is to construct solutions to~\eqref{1SLW}, in the sense of Definition~\ref{1weakSolution} using a probabilistic method when initial data are below the energy space. Indeed, up to the knowledge of the author, no global existence result is known for~\eqref{1SLW}, $p>5$ and initial data $(u_0,u_1) \in H^s \times H^{s-1}$ with~$s<1$.

Probabilistic methods have been implemented in order to construct solutions to~\eqref{1SLW} associated to initial data below the energy space. As a result, the local and global well-posedness theory have been widely improved. We briefly recall the existing results in the context of semilinear wave equations, our list being not exhaustive.  

The probabilistic well-posedness theory goes back to J. Bourgain who proved global existence for the two-dimensional nonlinear Schrödinger equation in~\cite{bourgain}. Building on Bourgain's ideas, N. Burq and N. Tzvetkov published a series of two articles~\cite{burqTzvetkov2,burqTzvetkov3},  introducing a randomization procedure that allows to choose random initial data in Lebesgue and Sobolev spaces. They developed the local and global probabilistic well-posedness theory. They later considered the global well-posedness of~\eqref{1SLW} for $p=3$ in~\cite{burqTzvetkov} proving that, although for $s<\frac12$, \eqref{1SLW} is ill-posed, there exist unique global solutions for almost every initial data in $H^s(\mathbb{T}^3) \times H^{s-1}(\mathbb{T}^3)$ as soon as $s \geqslant 0$. The proof relies on a probabilistic improvement of the Strichartz estimates. In~\cite{burq}, Burq-Thomann-Tzvetkov considered~\eqref{1SLW} in higher dimensions and proved the almost sure existence of global infinite energy solutions of~\eqref{1SLW}, for $p=3$ in $\mathbb{T}^d$, $d \geqslant 3$. Their argument use compactness techniques just like the ones presented in this article. We mention that in the context of the Navier-Stokes equation, A.R. Nahmod, N. Pavlovi\'c  and G. Staffilani proved existence of global weak solutions almost surely in~\cite{nahmodPavlovicStaffilani}.

The work of Lührman-Mendelson in~\cite{lurmanMendelson,lurmannMenselsonBis} deals with the global well-posedness theory for~\eqref{1SLW} in the case $3<p<5$. They prove an almost-sure global well-posedness result associated to initial data \[(u_0,u_1) \in H^s(\mathbb{R}^3) \times H^{s-1}(\mathbb{R}^3) \text{ as long as } s > \frac{p^3+5p^2-11p-3}{9p^2-6p-3}\] which improves the deterministic theory when $\frac{1}{4}(7+\sqrt{73}) \simeq 3.88 < p < 5$. In~\cite{lurmannMenselsonBis} they improved their result to $\frac{p-1}{p+1}<s<1$ using Oh-Pocovnicu's ideas from \cite{ohPocovnicu}. 

In~\cite{pocovnicu} O. Pocovnicu proved almost-sure global well-posedness for the energy critical wave equation \eqref{1SLW}, that is $p=5$, in the euclidean space $\mathbb{R}^d$ of dimension $d=4,5$. The proof relies on the deterministic perturbation theory for critical dispersive equations as well as the probabilistic improvements of the Strichartz estimates coming from the work of Burq-Tzvetkov. With some more efforts in the domains $\mathbb{R}^3$ and $\mathbb{T}^3$ the global well-posedness theory for~\eqref{1SLW}, $p=5$ has been treated in the joint work of Oh-Pocovnicu in~\cite{ohPocovnicu,ohPocovnicu2}. In their proof they used a new energy estimate and a new probabilistic Strichartz estimate. Their result shows that almost-sure global well-posedness in known to hold for initial data in $H^s \times H^{s-1}$, $s>\frac{1}{2}$. 

The global well-posedness theory for~\eqref{1SLW} and when $3<p<5$ was then studied in the work of Sun-Xia, in~\cite{sunXia}. They proved global existence and uniqueness for $s > \frac{p-3}{p-1}$ interpolating between the results of Oh-Pocovnicu \cite{ohPocovnicu} and Burq-Tzvetkov \cite{burqTzvetkov}.

Finally it should be mentioned that in the context of the semilinear Shrödinger equation, similar probabilistic well-posedness have been investigated, we refer to \cite{benyiOhPocovnicu2,okamotoOhPocovnicu} for the euclidean setting and \cite{bourgainBulut1,bourgainBulut2} for the setting of the unit ball. 

\subsection{Main results and notations}

\subsubsection{Statement of the main results} Let $(\Omega, \mathcal{F},\mathbb{P})$ be a probability space, and a randomization map $(u_0,u_1)\longmapsto (u_0^{\omega},u_1^{\omega})$ that will be described in Section~\ref{1sousSectionProba}, see \eqref{1rand1} and \eqref{1rand2}. Let $U$ be an open set. The measure $\mu_{(u_0,u_1)}$ is defined as the pushforward probability measure of $\mathbb{P}$ by the above randomization map. We define

\[\mathcal{M}^s(U):=\bigcup_{(u_0,u_1)\in H^s(U) \times H^{s-1}(U)} \left\{\mu_{(u_0,u_1)}\right\}\,.\] 

We now state our results. The first is an existence result in the supercritical case. 

\begin{theorem}\label{1mainTheorem} Let $p > 5$, $s > \frac{p-3}{p-1}$ and $U$ which stands for $\mathbb{R}^3$ or $\mathbb{T}^3$. Let $\mu \in \mathcal{M}^s(U)$. Then for $\mu$ almost every $(u_0,u_1) \in H^s(U) \times H^{s-1}(U)$ there exists a global weak solution $u$ to \eqref{1SLW}, in the sense of Definition~\ref{1weakSolution}. 
\end{theorem}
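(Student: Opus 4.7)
\medskip

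\noindent\textbf{Proof plan for Theorem~\ref{1mainTheorem}.}
The plan is to follow the compactness scheme of Burq--Tzvetkov \cite{burqTzvetkov} and Burq--Thomann--Tzvetkov \cite{burq}, with the probabilistic Strichartz input refined as in Oh--Pocovnicu \cite{ohPocovnicu}. Write the sought solution as $u=z+v$, where $z$ is the free evolution of the random data $(u_0^\omega,u_1^\omega)$, and where the nonlinear remainder $v$ must satisfy
\begin{equation*}
\partial_t^2 v - \Delta v + |z+v|^{p-1}(z+v) = 0, \qquad (v(0),\partial_t v(0))=(0,0).
\end{equation*}
Because the initial data for $v$ vanish, $v$ should \emph{a priori} have finite energy, which is precisely what is needed for Definition~\ref{1weakSolution}. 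The whole difficulty is thus transferred to constructing a global finite-energy solution to this forced equation whose forcing term $z$ is only in $H^s\times H^{s-1}$ with $s<1$, but has improved integrability thanks to the randomization.

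The first step is to establish the probabilistic Strichartz estimates that will be used throughout. For $\mu$ almost every $\omega$ and every $T>0$, the free solution $z$ belongs to $L^q([-T,T];L^r(U))$ for a range of pairs $(q,r)$ going strictly beyond what the deterministic Strichartz theory allows at regularity $s$; these gains come from the fact that the randomization preserves the $H^s$-norm while decorrelating frequencies, so that unit-scale $L^r$ norms can be estimated by $H^s$ norms via Khintchine's inequality. The precise exponents needed are dictated by the next step.

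The core of the argument is the construction of $v$ via a smooth approximation $v_N$, obtained for instance by replacing $z$ by its frequency truncation $z_N=P_{\leqslant N}z$ and solving
\begin{equation*}
\partial_t^2 v_N - \Delta v_N + |z_N+v_N|^{p-1}(z_N+v_N) = 0, \qquad (v_N(0),\partial_t v_N(0))=(0,0).
\end{equation*}
For fixed $N$, $z_N$ is smooth, so classical Strauss-type theory yields a smooth global-in-time $v_N$. The essential work is to prove a \emph{uniform} energy bound on $v_N$ on every compact time interval $[-T,T]$, almost surely in $\omega$. Differentiating the energy of $v_N$ gives
\begin{equation*}
\frac{d}{dt}E(v_N)(t) = -\int_U \partial_t v_N \cdot \bigl(|z_N+v_N|^{p-1}(z_N+v_N)-|v_N|^{p-1}v_N\bigr)\, \mathrm{d}x,
\end{equation*}
and one must control the right-hand side by a power of $E(v_N)$ times an almost-surely finite function of $z_N$. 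Expanding $|z_N+v_N|^{p-1}(z_N+v_N)-|v_N|^{p-1}v_N$ produces terms of the schematic form $z_N^a v_N^{p-a}$ for $1\leqslant a\leqslant p$. These are handled by Hölder, Sobolev and the probabilistic Strichartz bounds on $z_N$, together with the energy control $\|\partial_t v_N\|_{L^2}+\|v_N\|_{L^{p+1}}\lesssim E(v_N)^{1/2}+E(v_N)^{1/(p+1)}$. Gronwall then yields a bound on $E(v_N)$ uniform in $N$ on $[-T,T]$. This is the main technical obstacle: in the supercritical range $p>5$, the exponents in the resulting estimates are tight, and it is precisely the threshold $s>\frac{p-3}{p-1}$ that makes the Hölder book-keeping close; this is the same arithmetic as in \cite{sunXia,ohPocovnicu}, extended here to all $p>5$.

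With the uniform energy bound in hand, extract a subsequence such that $v_N \longrightharpoonup v$ weakly-$\star$ in $L^\infty([-T,T]; H^1(U))$ and $\partial_t v_N \longrightharpoonup \partial_t v$ weakly-$\star$ in $L^\infty([-T,T]; L^2(U))$, uniformly in $T$. Rellich--Aubin--Lions compactness then provides strong convergence $v_N\to v$ in $L^{p+1}_{\mathrm{loc}}$, and $z_N\to z$ strongly in the relevant Strichartz spaces. These are enough to pass to the limit in the nonlinear term against a test function $\varphi\in\mathcal C^2_c(\mathbb R\times U)$ and obtain the weak formulation of Definition~\ref{1weakSolution}. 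A diagonal extraction in $T$ produces a global weak solution on $\mathbb R\times U$ for $\mu$-almost every initial datum, completing the proof.
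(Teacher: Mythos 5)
Your overall scheme (decompose $u=z+v$, construct global approximate solutions $v_N$, prove uniform energy bounds, pass to the limit by Rellich--Aubin--Lions and a diagonal extraction in $T$) is the right one and matches the paper. But there are two genuine gaps, and the first one is fatal as written.

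\textbf{Construction of the approximants.} You keep the full nonlinear wave operator and only smooth the forcing, solving $\partial_t^2 v_N - \Delta v_N + |z_N+v_N|^{p-1}(z_N+v_N)=0$, and claim that ``classical Strauss-type theory yields a smooth global-in-time $v_N$.'' This is false for $p>5$: the equation is energy-supercritical, and even with smooth, compactly supported data, global existence of \emph{smooth} solutions is a famous open problem. Strauss's theorem (Theorem~\ref{1straussTheo}) produces only global weak solutions with an energy \emph{inequality}, not smooth solutions with an energy identity; you therefore cannot differentiate $E(v_N)$ in time nor perform the subsequent integrations by parts. The paper resolves this by projecting the \emph{entire} equation onto low frequencies, $\partial_t^2 u_n - \mathbf{P}_n\Delta u_n + \mathbf{P}_n f(u_n)=0$ with band-limited data $(\mathbf{P}_n u_0, \mathbf{P}_n u_1)$. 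Because the solution lives in $L^2_n := \{f\in L^2: \mathbf{P}_n f=f\}$, Bernstein gives $L^2_n\hookrightarrow L^\infty$, the vector field is locally Lipschitz, Picard--Lindelöf applies, and energy conservation (not merely inequality) propagates it globally. This genuine regularization of the PDE is what makes all the downstream energy manipulations rigorous.

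\textbf{The uniform energy bound.} You describe the key estimate as ``Hölder, Sobolev and probabilistic Strichartz bounds $\ldots$ Gronwall then yields a bound $\ldots$ uniform in $N$.'' This glosses over the essential mechanism. A direct Hölder on the worst term $\int \partial_t v_N \cdot v_N^{p-1} z_N \,\mathrm{d}x$ gives $E' \lesssim E^{\frac{1}{2}+\frac{p-1}{p+1}}$, which is superlinear already for $p>3$ and does not close. The whole point of the Oh--Pocovnicu argument (used in the paper's Lemma~\ref{1lemmeTechnique}) is to integrate by parts \emph{in time}, converting $\partial_t v_N v_N^{p-1}$ into $\partial_t(v_N^p)$, then use $\partial_t z_N = \langle\nabla\rangle\tilde z_N$ to trade a time derivative for a space derivative, and finally integrate by parts in space with $\nabla^{1-s_p}$ so that Gagliardo--Nirenberg with exponent exactly $\alpha=s_p=\frac{p-3}{p-1}$ makes the estimate \emph{linear} in $E$. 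Without naming this maneuver, ``the Hölder book-keeping closes at $s>\frac{p-3}{p-1}$'' is an assertion, not an argument; it is precisely this integration-by-parts trick (together with a Taylor expansion of $f$ to order $\alpha_p=\lceil\frac{p-3}{2}\rceil$ and Besov-space chain-rule estimates) that produces the threshold.

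The compactness and diagonal argument at the end are correct and mirror the paper.
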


\begin{remark} Note that no information is given concerning the uniqueness of the solutions constructed. 
\end{remark}

The solutions constructed in Theorem~\ref{1mainTheorem} enjoy additional properties:

\begin{corollary}\label{1coroAdditional} Under the hypothesis of Theorem~\ref{1mainTheorem}, and given $\mu \in \mathcal{M}^s(U)$, there exists a set ${\Sigma \subset H^s(U) \times H^{s-1}(U)}$ of full $\mu$-measure which is invariant under the flow of~\eqref{1SLW}. If~$s<1$ then for every initial data $(u_0,u_1) \in \Sigma$, the solutions $u$ constructed by Theorem\ref{1mainTheorem} satisfy $(u,\partial _t u) \in \mathcal{C}^0(\mathbb{R},H^s(U)\times H^{s-1}(U))$
\end{corollary}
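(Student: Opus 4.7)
The candidate for $\Sigma$ is the subset of $H^s(U)\times H^{s-1}(U)$ consisting of initial data for which the construction of Theorem~\ref{1mainTheorem} succeeds globally in time, namely those $(u_0,u_1)$ whose linear evolution $z$ enjoys, on every compact time interval, the probabilistic Strichartz and $L^{p+1}_{t,x}$-type bounds needed to control the nonlinear remainder $v$. By construction and a Borel--Cantelli argument across a dyadic exhaustion of $\mathbb{R}$ in time, $\Sigma$ has full $\mu$-measure.

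To prove flow-invariance, I would fix $(u_0,u_1)\in\Sigma$, $t_0\in\mathbb{R}$, and decompose the associated solution as $u=z+v$ with $(v,\partial_t v)\in H^1\times L^2$. Denoting by $S(\tau)$ the linear wave propagator, linearity yields
\[ \tilde z(\tau) := S(\tau)(u(t_0),\partial_t u(t_0)) = z(\tau+t_0)+w(\tau), \]
with $w(\tau) := S(\tau)(v(t_0),\partial_t v(t_0))$. The first summand inherits the bounds defining $\Sigma$ by time-translation of $z$; the second summand, being the linear evolution of energy-space data, satisfies the deterministic Strichartz estimates at regularity $1$, which in turn imply the (weaker) probabilistic bounds at regularity $s<1$ defining $\Sigma$. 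Hence $\tilde z$ satisfies them, and setting $\tilde v(\tau):=v(\tau+t_0)-w(\tau)$ gives a decomposition of $u(\cdot+t_0)$ in the sense of Definition~\ref{1weakSolution}, proving $(u(t_0),\partial_t u(t_0))\in\Sigma$.

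For the continuity statement when $s<1$, I would treat the two summands of $u=z+v$ separately. The linear theory for \eqref{1LW} gives $(z,\partial_t z)\in\mathcal{C}^0(\mathbb{R}, H^s\times H^{s-1})$ from $(u_0,u_1)\in H^s\times H^{s-1}$. For $v$, the energy estimate obtained during the construction of Theorem~\ref{1mainTheorem}, together with a classical argument turning weak convergence combined with convergence of energies into strong convergence, upgrades the regularity of Definition~\ref{1weakSolution} to $(v,\partial_t v)\in\mathcal{C}^0(\mathbb{R}, H^1\times L^2)$. The embedding $H^1\times L^2\hookrightarrow H^s\times H^{s-1}$, valid for $s\leqslant 1$, then produces $(v,\partial_t v)\in\mathcal{C}^0(\mathbb{R},H^s\times H^{s-1})$, and summing with $z$ concludes.

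The main obstacle I anticipate is precisely this last upgrade: Definition~\ref{1weakSolution} only provides $v\in H^1((-T,T)\times U)$, which a priori yields only weak continuity in $H^1\times L^2$. Upgrading to strong continuity requires returning to the approximation scheme producing $v$, extracting a sharp energy identity at the approximate level, and passing to the limit to show that $t\mapsto\|(v(t),\partial_t v(t))\|_{H^1\times L^2}$ is continuous; this, combined with the weak continuity already at hand, forces strong continuity, as in the classical Strauss argument.
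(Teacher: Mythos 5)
Your construction and verification of the invariant set $\Sigma$ matches the paper's in essence: the paper takes $\Sigma=\Theta+\mathcal{H}^1$ where $\Theta$ is the set of data whose free evolution has the required Strichartz-type bounds locally in time, and the observation that $S(t)(\Theta)=\Theta$ together with the $\mathcal{H}^1$-valued nonlinear remainder is exactly what makes the translation/superposition argument you spell out go through. That part is fine.

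The continuity argument, however, takes a route that has a genuine gap, which you yourself partially identify. You aim to show strong continuity $(v,\partial_t v)\in\mathcal{C}^0(\mathbb{R},H^1\times L^2)$ and then embed into $H^s\times H^{s-1}$. But the weak solution $v$ is obtained by a compactness limit of the approximate solutions $v_n$, and such limits typically only come with an energy \emph{inequality}, not an energy identity — exactly as in the Strauss construction, where $E(u(t))\leqslant E(u_0,u_1)$ is all one gets. Weak convergence plus convergence of norms does indeed upgrade to strong convergence at each fixed $t$, but this requires $t\mapsto E(t)$ to be a continuous function of the limit $v$, and passing the approximate energy identity to the limit to obtain this is precisely what is unavailable here. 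In other words, the statement you want to establish is stronger than what the corollary claims, and also stronger than what the approximation scheme delivers; there is no reason to expect $\mathcal{C}^0_t(H^1\times L^2)$ continuity for these weak supercritical solutions.

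The paper's route sidesteps this obstruction entirely by never asking for continuity in the energy space. It applies the Aubin--Lions theorem with $X_0=H^1$, $X_1=L^2$, $X=H^s$, $p=q=\infty$: the uniform bounds $\sup_n\|v_n\|_{L^\infty_T H^1}<\infty$ and $\sup_n\|\partial_t v_n\|_{L^\infty_T L^2}<\infty$ from Proposition~\ref{1propProbabilistic}, combined with the local compactness of $H^1\hookrightarrow H^s$ for $s<1$, yield strong convergence $v_n\to v$ in $\mathcal{C}^0([-T,T],H^s)$; a second application, using the equation to bound $\partial_t^2 v_n$ in $L^\infty_T H^{-3/2}$, gives $\partial_t v\in\mathcal{C}^0([-T,T],H^{s-1})$. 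The strict inequality $s<1$ is essential — it is what provides the compactness that Aubin--Lions requires — and this is precisely why the corollary's continuity statement is restricted to $s<1$. So rather than attempting to "extract a sharp energy identity and pass to the limit", you should run the Aubin--Lions argument directly at the intermediate regularity; the energy-space continuity you were aiming for is not needed and probably not true.
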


\begin{corollary}\label{1finiteSpeed} For $U=\mathbb{R}^3$, the solutions constructed by Theorem~\ref{1mainTheorem} enjoy the finite speed of propagation property with speed at most $1$.
\end{corollary}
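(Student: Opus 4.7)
The plan is to exploit the decomposition $u = z + v$ from Definition~\ref{1weakSolution} and to establish finite speed separately for the linear part $z$ (classical) and for the nonlinear remainder $v$ (via a cone energy inequality). Fix $x_0 \in \mathbb{R}^3$ and $R>0$ and suppose that $(u_0,u_1)$ vanishes on $B(x_0,R)$; by the time-reversal symmetry of \eqref{1SLW} it is enough to prove that $u\equiv 0$ on the forward truncated cone
\[
K := \{(t,x)\in[0,R)\times\mathbb{R}^3 : |x-x_0| < R - t\}\,.
\]
For the linear piece, classical finite speed of propagation for \eqref{1LW} on $\mathbb{R}^3$ (via the Kirchhoff representation, or equivalently a Friedrichs local energy identity) immediately gives $z\equiv 0$ on $K$.

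For the nonlinear remainder, Definition~\ref{1weakSolution} provides $v\in H^1((-T,T)\times\mathbb{R}^3) \cap L^{p+1}((-T,T)\times\mathbb{R}^3)$ for every $T>0$, with $(v(0),\partial_t v(0))=(0,0)$ and satisfying $\partial_t^2 v - \Delta v + |z+v|^{p-1}(z+v) = 0$ distributionally. Since $z\equiv 0$ on $K$, the equation restricts on $K$ to the defocusing semilinear wave equation
\[
\partial_t^2 v - \Delta v + |v|^{p-1}v = 0\,,
\]
with vanishing Cauchy data on the base $\{0\}\times B(x_0,R)$; the task reduces to ruling out a nontrivial weak $H^1\cap L^{p+1}$ solution with zero data. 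To this end I would introduce the local energy
\[
e(t) := \int_{B(x_0,R-t)} \left(\frac{|\partial_t v(t,x)|^2+|\nabla v(t,x)|^2}{2} + \frac{|v(t,x)|^{p+1}}{p+1}\right)\mathrm{d}x\,,
\]
defined for almost every $t\in[0,R)$. Multiplying the equation formally by $\partial_t v$ and integrating over the truncated cone $\{(s,x): 0\leqslant s\leqslant t,\ |x-x_0|\leqslant R-s\}$, the divergence theorem yields $e(t)-e(0)$ plus a nonnegative flux across the lateral boundary (a complete square in $\partial_t v$ and the normal component of $\nabla v$, plus the defocusing potential, all divided by $\sqrt{2}$). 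Hence $e$ is non-increasing; combined with $e(0)=0$ this forces $v\equiv 0$ on $K$, and therefore $u\equiv 0$ on $K$ as claimed.

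The main technical point to address is to justify this integration by parts at the weak regularity level of $v$. I would proceed as in the Shatah-Struwe approach to weak solutions of semilinear wave equations, by replacing the sharp characteristic function of the truncated cone by a smooth cutoff $\chi_\varepsilon$, testing the distributional equation against $\chi_\varepsilon\,\partial_t v_\delta$ where $v_\delta$ is a space-time mollification of $v$, then sending first $\delta\to 0$ (exploiting $v\in H^1\cap L^{p+1}$ locally to pass the mollifier through each term, including the nonlinearity) and then $\varepsilon\to 0$. Alternatively, since the compactness scheme underlying Theorem~\ref{1mainTheorem} produces $v$ as a limit of smooth approximations $v_N$ that satisfy a cone energy identity up to a source contribution tending to zero on $K$ as $N\to\infty$, one could transfer the inequality directly through the limit. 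Either route is standard for defocusing semilinear wave equations, and I expect the weak-regularity justification to be the only nontrivial, though entirely technical, step.
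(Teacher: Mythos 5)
Your route is genuinely different from the paper's. The paper's proof is very short: it asserts that finite speed of propagation holds for the regularized equations \eqref{1rSLW}, hence that each approximate solution $u_n(t)$ is supported in $B(0,R+t)$, and then passes to the limit using the almost-everywhere pointwise convergence $u_n(t) \to u(t)$ along the extracted subsequence from Lemma~\ref{1linearLemma} and Lemma~\ref{1nonLinear}. You instead work directly at the level of the weak solution $u=z+v$: finite speed for the linear flow $z$ is classical, and on the truncated cone $K$ where $z\equiv 0$ you reduce to showing that the defocusing equation $\partial_t^2 v - \Delta v + |v|^{p-1}v=0$ with vanishing Cauchy data has only the trivial weak solution, which you attack via a cone energy inequality.

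Two caveats, one on each side. The paper's assertion that \eqref{1rSLW} enjoys finite speed is delicate, because the system carries the Fourier projector $\mathbf{P}_n$ both in the initial data $\mathbf{P}_n(u_0,u_1)$ and in the evolution $\mathbf{P}_n\Delta$; these are non-local operators, $\mathbf{P}_n u_0$ is not compactly supported even when $u_0$ is, and $\mathbf{P}_n\Delta$ propagates information instantaneously, so the claim is not available for \eqref{1rSLW} in the usual sense. Your second proposed alternative (transferring the cone energy identity through the approximants $v_N$) inherits exactly this obstruction and would not go through as written. Your first route avoids the regularized system entirely, which is cleaner — but the step you call ``entirely technical'' is in fact the crux, and it is open at this regularity. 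The cone energy inequality for a weak solution $v\in H^1\cap L^{p+1}$ requires making rigorous the chain rule $\partial_t v\,|v|^{p-1}v = \partial_t\bigl(\tfrac{|v|^{p+1}}{p+1}\bigr)$, yet the product $\partial_t v\,|v|^{p-1}v$ is not even evidently locally integrable from $\partial_t v\in L^2$ and $v\in L^{p+1}$, since $\frac12+\frac{p}{p+1}>1$ for every $p>1$. The Shatah--Struwe machinery you invoke was developed precisely for the critical exponent $p=5$ in dimension $3$, where additional Strichartz integrability of the solution makes this product tractable; none of that is available in the supercritical range $p>5$. So the key step of your argument is a genuine gap, not a formality, and your sketch does not close it.
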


The next result is an extension of Theorem~1.2 from~\cite{sunXia} to the endpoint $s=\frac{p-3}{p-1}$. 

\begin{theorem}\label{1th2} Let $p < 5$, $s:=\frac{p-3}{p-1}$ and $U$ which stands for $\mathbb{R}^3$ or $\mathbb{T}^3$. Let $\mu \in \mathcal{M}^s$. Then for almost every $(u_0,u_1) \in H^s(U) \times H^{s-1}(U)$ there exists a unique global \emph{strong} solution to~\eqref{1SLW} in the sense of Definition~\ref{1defStrong}, where uniqueness for $(u, \partial _tu)$ holds in the space
\[S(\cdot)(u_0,u_1)+ \mathcal{C}^0(\mathbb{R},H^s(U) \cap L^{p+1}(U)) \times \mathcal{C}^0(\mathbb{R},H^{s-1}(U))\,,\]
with $S(\cdot)(u_0,u_1)$ standing for the solution to~\eqref{1LW} associated to $(u_0,u_1)$. 
\end{theorem}

Again a consequence of the proof of Theorem~\ref{1th2} is the following: 

\begin{corollary}\label{1coroth2} Under the hypothesis of Theorem~\ref{1th2}, and given $\mu \in \mathcal{M}^s$, there exists a set $\Sigma \subset H^s(U) \times H^{s-1}(U)$ of full $\mu$-measure which is invariant under the flow of~\eqref{1SLW}.
\end{corollary}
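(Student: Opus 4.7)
The plan is to take $\Sigma$ to be the set of initial data $(u_0,u_1) \in H^s(U) \times H^{s-1}(U)$ for which Theorem~\ref{1th2} produces a unique global strong solution to \eqref{1SLW}. By the very statement of Theorem~\ref{1th2}, this set has full $\mu$-measure. The only nontrivial task is to verify that $\Sigma$ is stable under the map $\Phi_{t_0} : (u_0,u_1) \mapsto (u(t_0), \partial_t u(t_0))$ for every $t_0 \in \mathbb{R}$.

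Given $(u_0,u_1) \in \Sigma$, the strong solution comes with a decomposition $u = z + v$ where $z$ is the linear evolution of $(u_0,u_1)$ and the nonlinear part $v$ lives in the energy space with $(v(0), \partial_t v(0)) = (0,0)$. To recognize $\tilde{u}(t) := u(t+t_0)$ as a strong solution in the same class, I would introduce $\tilde{z}$, the linear evolution of $(u(t_0), \partial_t u(t_0))$, and exploit the linearity of \eqref{1LW} to split it as
\[
\tilde{z}(t) = z(t+t_0) + w(t),
\]
where $w$ is the linear evolution of $(v(t_0), \partial_t v(t_0)) \in H^1(U) \times L^2(U)$. Then $\tilde{v}(t) := v(t+t_0) - w(t)$ satisfies $(\tilde{v}(0), \partial_t \tilde{v}(0)) = (0,0)$, stays in the energy space for all time, and a direct computation using the equations satisfied by $v$ and $w$ shows that $\tilde{v}$ solves the expected forced nonlinear wave equation with source $\tilde{z}$. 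Uniqueness of $\tilde{u}$ within the same class is transported from uniqueness of $u$ via the time translation $t \mapsto t - t_0$. Consequently $(u(t_0), \partial_t u(t_0)) \in \Sigma$, which gives invariance.

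The main subtle point is to check that the decomposition $\tilde{u} = \tilde{z} + \tilde{v}$ is genuinely of the form produced by the fixed-point argument underlying Theorem~\ref{1th2}. This reduces to observing that adding the $H^1$-regular piece $w$ to the rough linear evolution does not spoil the probabilistic Strichartz bounds exploited in that proof, because such bounds are only needed for the truly rough contribution $z(\cdot + t_0)$, while the correction $w$ and the remainder $\tilde{v}$ live in the standard deterministic energy framework. All remaining manipulations are routine time-translation bookkeeping.
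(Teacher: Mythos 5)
Your key insight -- decomposing the shifted linear evolution as $\tilde z(t)=z(t+t_0)+w(t)$ with $w$ the free evolution of the finite-energy piece $(v(t_0),\partial_t v(t_0))\in\mathcal{H}^1$ -- is exactly the structural reason invariance holds, and it is the same mechanism the paper exploits. The route you take is slightly different, though, and there is a soft spot worth flagging.

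You define $\Sigma$ abstractly as ``the set of data for which Theorem~\ref{1th2} produces a unique global strong solution.'' To close the invariance argument you then need to know that, for any $(u_0,u_1)\in\Sigma$, the rough linear flow $S(t)(u_0,u_1)$ satisfies all of the quantitative probabilistic Strichartz bounds on \emph{every} time interval and for \emph{every} Littlewood--Paley truncation, because these bounds are what makes the Yudovich bootstrap in Proposition~\ref{1propProbabilistic5} and the perturbed local theory run with the shifted source $z(\cdot+t_0)$. Membership in your abstractly defined $\Sigma$ does not, a priori, encode that: it only says a global strong solution exists, which is a possibly larger set. The paper sidesteps this by defining $\Sigma$ the other way around: it first fixes an explicit set $\Theta$ (in the notation of the proof, $\bigcap_{k,q}\Theta_k^q$) of data whose linear evolution satisfies the bounds used in the proof, notes $S(t_0)\Theta=\Theta$ by inspection, and then takes $\Sigma:=\Theta+\mathcal{H}^1$. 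Invariance is then immediate: $(u(t_0),\partial_t u(t_0))=S(t_0)(u_0,u_1)+(v(t_0),\partial_t v(t_0))$, and the first summand stays in $\Theta$ while the second stays in $\mathcal{H}^1$. Full measure follows because $\Theta$ has full measure by the Borel--Cantelli construction. In short: your decomposition is the right one, but you should define $\Sigma$ via the explicit linear-flow bounds rather than via the conclusion of the theorem, so that the invariance of $\Theta$ under $S(t_0)$ is transparent rather than something to be re-derived.
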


\begin{remark} One can also prove probabilistic continuous dependence of the flow in the sense of \cite{burqTzvetkov}, using the proof given in \cite{pocovnicu}, Theorem~1.4 and Remark~1.6~(iii) but this paper does not focus on that matter. 
\end{remark}

\begin{remark} Combining these results with the existing results in the case $p \leqslant 5$, see \cite{burqTzvetkov,ohPocovnicu,ohPocovnicu2,sunXia} yields the following classification: 
\begin{enumerate}[label=(\textit{\roman*})]
    \item In the \textit{energy sub-critical} setting \textit{i.e} $p \in [3,5)$, there exists a unique strong solution to \eqref{1SLW} for amlost every initial data in $H^s(U)\times H^{s-1}(U)$ as soon as $s \geqslant \frac{p-3}{p-1}$. The case $p=3$ is treated entirely in \cite{burqTzvetkov} in the case of the torus, but similar arguments work in the euclidean setting. The case $p\in(3,5)$ and $s>\frac{p-3}{p-1}$ is treated in \cite{sunXia}, and the case $p\in(3,5)$, $s=\frac{p-3}{p-1}$ is Theorem~\ref{1th2}. In \cite{burqTzvetkov}, additional results of continuous dependence and flow-invariant set are proven in the case $p=3, s \geqslant 0$ which remain valid in the case $p \in (3,5)$, $s\geqslant \frac{p-3}{p-1}$. 
    \item In the \textit{energy critical} setting $p=5$, there exists a unique strong solution to \eqref{1SLW} with initial data in $H^s(U)\times H^{s-1}(U)$ as soon as $s > \frac{1}{2}$. This result is proven in \cite{ohPocovnicu,ohPocovnicu2} both in $\mathbb{R}^3$ and $\mathbb{T}^3$ and comes with a continuous dependence result and flow-invariant set construction.
    \item In the \textit{energy super-critical} setting $p>5$ there exists a global strong solution as long as $s \in (\frac{p-3}{p-1},1)$ : this is Theorem~\ref{1mainTheorem}. For $s>1$ there still exists a weak solution, in the sense of Definition~\ref{1weakSolution}. In both cases such solutions can be constructed in a flow-invariant way, this is Corollary~\ref{1coroAdditional}.
\end{enumerate}
\end{remark}

\begin{remark} One can wonder if it is possible, for $p > 5$ (resp. $p=5$), to extend the result of Theorem~\ref{1mainTheorem} (resp. Theorem~\ref{1th2}) to the endpoint $s=\frac{p-3}{p-1}$. As it will be explained in Remark~\ref{1remImpossible}, this endpoint remains out of reach of the techniques presented on this article. Thus the extension to the endpoint proven in Theorem~\ref{1th2} may be viewed as subcritical. 
\end{remark}

\subsubsection{General notation} $(\Omega, \mathcal{F}, \mathbb{P})$ is called a probability space if $\Omega$ is a set and $\mathcal{F}$ is a $\sigma$-algebra, endowed with a probability measure $\mathbb{P}$. The expectation of a random variable $X$ will be denoted by $\mathbb{E}[X]$. 

The notation $A \lesssim B$ means that there exists a constant $C$ such that $A \leqslant CB$. The notation~$A \lesssim_{x} B$ is used to specify that the constant $C$ depends on $x$. 

For a real number $x$, $\lfloor x \rfloor$ (resp. $\lceil x \rceil$) denotes the lower integer part (resp. upper integral part).

We adopt widely used notations for functional spaces: $\mathcal{C}^k$ denotes the set of $k$ differentiable functions with continuous derivatives up to order $k$, $L^p$ stands for the Lebesgue spaces and $W^{s,p}:=\{u, (\operatorname{Id}-\Delta)^{s/2} u \in L^p\}$ (resp. $\dot W^{s,p}$) denotes the usual nonhomogeneous Sobolev spaces (resp. homogeneous spaces). The hilbertian Sobolev spaces are denoted $H^s:=W^{s,2}$. Besov spaces are denoted $B^{s}_{p,r}$, see Appendix~\ref{1appendixA} for more details. The Fourier transform of a function $f$ is denoted by $\hat f$ or equivalently $\mathcal{F}(f)$. $\mathcal{D}$ denotes the space of test functions, that is $\mathcal{C}^{\infty}$ functions with compact support. $\mathcal{S}'$ stands for the space of tempered distributions. If $X$ is a Banach space we often write $L^pX$ or $L^p_TX$ for $L^p((0,T),X)$. For $x \in \mathbb{R}^d$, $\langle x \rangle = (1+|x|^2)^{1/2}$, and $\langle \nabla \rangle$ denotes the Fourier multiplier of symbol $\langle \xi \rangle$.

$S^m:=S^m_{1,0}$ denotes the class of classical symbols of order $m \in \mathbb{R}$, that is smooth functions $a : \mathbb{R}^{d} \times \mathbb{R}^{d} \to \mathbb{C}$ such that \[| \partial^{\alpha} _{\xi}\partial _x^{\beta} a(x,\xi)| \lesssim_{\alpha, \beta} \langle \xi \rangle^{m-|\alpha|}, \text{ for every } \alpha \in \mathbb{N}^{d} \text{ and } x,\xi \in \mathbb{R}^d\,.\]

In the rest of this article $\mathcal{H}^s(U)$ will be used as a shorthand notation for $H^s(U) \times H^{s-1}(U)$.

\subsection{Outline of the paper, heuristic arguments} The purpose of this section is to provide the reader with heuristic arguments that will help to understand the main ideas behind the proofs of the two main results. Note that this section is not mathematically needed in order to follow the rigorous proofs of the result.

\subsubsection{Energy estimates} In this paragraph we set $U= \mathbb{R}^3$ and drop the reference to it. As in~\cite{burqTzvetkov,ohPocovnicu}, the method to construct global solutions to \eqref{1SLW} is to seek for solutions~$u$ of the form $u(t)=z(t)+v(t)$ with initial data $v(0)=\partial _t v(0)=0$, and $z$ being the solution to~\eqref{1LW} associated to initial data $(u_0,u_1) \in \mathcal{H}^s$. Note that $z$ is globally defined. It is thus sufficient to prove that $v$ globally solves an equation in the energy space $\mathcal{H}^1$. A direct computation shows that $v$ formally satisfies \[\partial _t^2 v - \Delta v + (z+v)^p=0 \,.\] 

We now assume that $v$ locally solves this equation in $\mathcal{H}^1$ and that the local well-posedness result comes with a blow-up criteria that only depends on the size of $\|(v(t),\partial _t v(t))\|_{\mathcal{H}^1}$. In this case, proving global existence $v$ reduces to proving that the (not conserved) nonlinear energy \[E(t):=\frac{\|\partial_t v(t)\|_{L^2}^2}{2}+\frac{\|\nabla v(t)\|_{L^2}^2}{2}+\frac{\|v(t)\|^{p+1}_{L^{p+1}}}{p+1}\] is bounded on every time interval. 

In order to do so, the standard way is to estimate $E(t)$ using a Grönwall-type estimate and hope for a sublinear estimate that will give non-blowup for $E(t)$. We first begin by writing that \[E'(t)= \int_{\mathbb{R}^3} \partial _t v(t) \left(|z(t)+v(t)|^{p-1}(z(t)+v(t))-|v(t)|^{p-1}v(t)\right) \, \mathrm{d}x \,.\] 
In the following we will provide a rough argument, ignoring lower order terms and using fractional integration by parts. The terms in powers of $z$ will be estimated using the probabilistic Strichartz estimates from Proposition~\ref{1propStrichartz} and Proposition~\ref{1propStrichartzBesov} so they constitute the ``good part'' when developing the quantity $(z+v)^p-v^p$, if we assume $p$ to be an odd integer for instance. These considerations lead to the ``worse order approximation'' $(z+v)^p-v^p \simeq v^{p-1}z + \text{(better terms)}$. The other terms are expected to be handled in an easier way so that we write : \[E'(t) \simeq \int_{\mathbb{R}^3} \partial _t v\, v^{p-1}z \, \mathrm{d}x + \underbrace{\text{(better terms)}}_{\lesssim E(t)} \,.\] 
From now we will just dismiss the better terms and study the worse term. 

\begin{remark} A crude estimate using the Hölder inequality, putting $z \in L^{\infty}$, $\partial _t v \in L^2$ and $v^{p-1} \in L^{\frac{p+1}{p-1}}$ leads to \[E'(t) \lesssim E(t')^{\frac{1}{2}+\frac{p-1}{p+1}}\] which is sublinear if $\frac{1}{2} + \frac{p-1}{p+1} \leqslant 1$ \textit{i.e.} $p \leqslant 3$. Thus such an argument would provide an energy estimate that does not blow-up for $p \leqslant 3$ and $s>0$. This was the idea behind the energy estimates in \cite{burqTzvetkov,pocovnicu}.
\end{remark}

In order to obtain energy estimates for $p=5$, Oh-Pocovnicu introduced in \cite{ohPocovnicu} an appropriate method that we describe: if one accepts a loss of regularity for the initial data then one can transfer time regularity into space regularity using an integration by part in time, and properties of the wave equation, which state that roughly $\partial _t z \simeq \nabla z$. 
Since $E(0)=0$ we have after integration in time: \[E(t) = \int_0^t E'(t')\, \mathrm{d}t' \simeq \int_0^t \int_{\mathbb{R}^3} \partial _t v v^{p-1}z \, \mathrm{d}x\, \mathrm{d}t'\,.\]
Then using that $\partial _t v v^{p-1} \sim \partial _t (v^p)$ an integration by parts yields 
\begin{align*}
    E(t) & \simeq \int_{\mathbb{R}^3} \int_0^t v(t')^p \partial _t z(t') \, \mathrm{d}t' \, \mathrm{d}x \\
    & \simeq \int_{\mathbb{R}^3} \int_0^t v(t')^p \nabla z(t') \, \mathrm{d}t' \, \mathrm{d}x \,.
\end{align*}
Pick $s\in[0,1]$ which will be chosen later. We write that $\nabla z = \nabla^{1-s}\nabla^s z$ and integrate by parts in space with the operator $\nabla ^{1-s}$, neglecting the boundary terms:  \[ E(t) \simeq \int_0^t \int_{\mathbb{R}^3} \nabla ^{1-s}(v(t')^p) \nabla^s z(t') \, \mathrm{d}x \, \mathrm{d}t' \,.\] 
We expect that $\nabla ^{1-s}(v^p) \simeq v^{p-1} \nabla ^{1-s}v$ so that Hölder's inequality yields 
\begin{align}
    E(t) & \label{1eqEnergy}\simeq  \int_0^t \int_{\mathbb{R}^3} \nabla ^{1-s}v(t') v(t')^{p-1} \nabla^s z(t') \, \mathrm{d}x \, \mathrm{d}t' \\
    & \lesssim \|\nabla ^s z\|_{L^{\infty}_{T,x}}\int_0^t E(t')^{\frac{p-1}{p+1}} \|v(t')\|_{\dot{W}^{s,\frac{p+1}{2}}} \, \mathrm{d}t' \notag
\end{align}
The term $\|v(t')\|_{\dot{W}^{s,\frac{p+1}{2}}}$ will be estimated by interpolating the estimates for $v(t')$ in $L^{p+1}$ and~$\dot{H}^1$. The standard tool to do so is the Gagliardo-Nirenberg inequality, see Theorem~\ref{1gagliardoNirenberg}. We obtain \[\|\nabla^{1-s} v\|_{L^{\frac{p+1}{2}}} \lesssim \|\nabla v\|_{L^2}^{1-\alpha} \|v\|_{L^{p+1}}^{\alpha} \lesssim E(t)^{\frac{1-\alpha}{2}+\frac{\alpha}{p+1}}\,,\] 
where $s$ satisfies the homogeneity conditions 
\[
\left \{
\begin{array}{c}
     s \leqslant \alpha  \\
     \frac{2}{p+1} - \frac{1-s}{3} = \frac{1-\alpha}{6} + \frac{\alpha}{p+1}
\end{array}
\right.\]
which gives $\alpha=s_p:=\frac{p-3}{p-1}$, so that \[E(t) \lesssim \|\nabla ^{s_p} z\|_{L^{\infty}_{T,x}}\int_0^t E(t') \, \mathrm{d}t'\] and the Grönwall lemma proves the non blow-up of $E(t)$, provided $\|\nabla ^{s_p} z\|_{L^{\infty}_{T,x}} < +\infty$ which is the case for initial data in $\mathcal{H}^{s_p+\varepsilon}$, thanks to probabilistic improvement of the Strichartz estimates that we will prove later.  

In our context it will not be possible to construct such a global strong solution $v$. However the strategy used in \cite{burq} applies: this is the strategy one uses to construct Leray solutions in the context of the Navier-Stokes equations, which consists in first finding approximate solutions $u_n=z_n+v_n$ that are global in time. Then the previous energy estimate provides uniform bounds for $v_n$ that allow strong compactness arguments in order to pass to the limit. 

\subsubsection{Yudovich-Wolibner argument} The Yudovich-Wolibner argument was first presented in the work of Wolibner in the context of the $2$-dimensional Euler equations, see~\cite{wolibner}. A similar argument was provided by Yudovich in the same context, see~\cite{yudovich}. We will recall this argument, in its simplest version. Note that this kind of argument has been widely used since, in particular in the study of~\eqref{1SLW}, $p=3, s=0$, see~\cite{burqTzvetkov}. 

Let us explain it in the context of the Schrödinger equation on $\mathbb{R}^2$:
\begin{equation}
    \label{1NLS}
    \tag{NLS}
    \left \{
    \begin{array}{ccc}
         i\partial_t u + \Delta u +u|u|^2 & =& 0\\
         u(0) & =& u_0 \in H^1(\mathbb{R}^2)\,. 
    \end{array}
    \right.
\end{equation}
Let $u_1,u_2 \in\mathcal{C}^0([0,T),H^1)$ be two solutions to \eqref{1NLS}. The following aims at proving uniqueness of solutions, that is $u_1=u_2$. In order to do so consider $E(t):=\|u_1(t)-u_2(t)\|^2_{L^2}$. Then a computation, using that $u_1, u_2$ solve~\eqref{1NLS} yields 

\[E'(t) \lesssim \int_{\mathbb{R}^2} |u_1(t)-u_2(t)|^2(|u_1(t)|^2+|u_2(t)|^2)\, \mathrm{d}x \,.\]
If the embedding $H^1(\mathbb{R}^2) \hookrightarrow L^{\infty}(\mathbb{R}^2)$ were true, then we would have $E'(t) \lesssim E(t)$ and would deduce that $E(t)=0$ for $t \in [0,T]$. Unfortunately there is no such continuous embedding, and we only have the \textit{Trudinger type} estimate~\cite{bahouriCheminDanchin} \[\|u\|_{L^p} \lesssim \sqrt{p}\|u\|_{H^1} \text{ for all } p>2\,.\] Using the previous inequality and the Hölder inequality gives
\begin{align}
    \label{1eqNonlin}
    E'(t) &\lesssim pE(t)^{1-\frac{1}{p}}
\end{align}
for all $p>1$. After integration by separation of variables this implies $E(t) \leqslant (Ct)^p$ for a constant $C>0$. Thus for a fixed $t< \frac{1}{C}$ and letting $p \to \infty$, we get $E(t)=0$. This argument can be iterated on time intervals $\left[\frac{n}{C},\frac{n+1}{C}\right]$ for $n \geqslant 0$ so that $E(t)=0$ for all $t\geqslant 0$. 

\begin{remark}
Another way to conclude is to optimize in $p$ in~\eqref{1eqNonlin} so that $E'(t) \lesssim -E(t) \log (E(t))$ and gives the same result. 
\end{remark}

As mentioned before our setting will be a little more complicated: we will need some bootstrap argument to conclude rather than this simple integration techniques. The method will be used to prove existence of global solutions in a limiting case rather than proving uniqueness, the framework being very similar. 

\subsubsection{Organization of the paper} Section~\ref{1estimeesProba} explains the the randomization procedure and recalls the probabilistic improvement for the Strichartz estimates. We then provide a generalization of these estimates in the context of Besov spaces, see Proposition~\ref{1propStrichartzBesov}.

Section~\ref{1sectionProof} is devoted to the proof of Theorem~\ref{1mainTheorem} in the case of the euclidean space $\mathbb{R}^3$. More precisely, sub-section~\ref{1sousSectionRegularized} proves existence of global solutions $u_n$ to approximate equations, sub-section~\ref{1sousSectionApriori} provides uniform bounds in $n$ for the nonlinear energies that will allow to use a compactness argument in sub-section~\ref{1sousSectionLimit}.

Section~\ref{1sectionTh2} provides the proof of Theorem~\ref{1th2} and its corollary. 

For reader's convenience some useful facts concerning Sobolev and Besov spaces are gathered in Appendix~\ref{1appendixA}.

\section*{Acknowledgement} 
The author warmly thanks Nicolas Burq and Isabelle Gallagher for careful advising during this work and suggesting this problem. The author is also grateful to the anonymous referee whose remarks helped improve the quality of this article. 

\section{Probabilistic estimates}\label{1estimeesProba} 

\subsection{The probabilistic setting} \label{1sousSectionProba} We first recall some standard notation in Littlewood-Paley analysis. 

Let $\mathcal{C}$ be the annulus $\{\xi \in \mathbb{R}^3,\; 3/4 \leqslant |\xi| \leqslant 8/3\}$, then there exists radial functions $\chi, \varphi$ taking values in $[0,1]$ belonging to $\mathcal{D}(B(0,4/3))$ and $\mathcal{D}(\mathcal{C})$ satisfying \[\chi (\xi) + \sum_{j \geqslant 0}\varphi(2^{-j}\xi)=1 \text{ for all } \xi \in \mathbb{R}^3\text{ and } \sum_{j \in \mathbb{Z}} \varphi (2^{-j}\xi)=1 \text{ for all } \xi \in \mathbb{R}^3 \setminus \{0\}\] and such that \[\begin{array}{ccc}
    |j-j'| \geqslant 2 & \Longrightarrow & \operatorname{supp} \varphi (2^{-j} \cdot) \cap \operatorname{supp} \varphi (2^{-j'} \cdot)= \varnothing \\
    j \geqslant 1 & \Longrightarrow & \operatorname{supp} \chi \cap \operatorname{supp} \varphi (2^{-j} \cdot) = \varnothing\,.
\end{array}\] 
We now define the nonhomogeneous Littlewood-Paley projectors: \[\Delta_j u := \left \{ \begin{array}{ccc}
    0 & \text{for} & j \leqslant -2\\
    \chi(D)u & \text{for} & j=-1\\
    \varphi(2^{-j}D)u & \text{for} & j\geqslant 0,
\end{array}\right.\]
where $\chi(D)$ (resp. $\varphi(2^{-j}D)$) denotes the Fourier multiplier of symbol $\chi$ (resp. $\varphi(2^{-j} \cdot)$). As a homogeneous Littlewood decomposition will be needed, we set $\dot{\Delta}_j := \varphi(2^{-j}D)$ for all $j \in \mathbb{Z}$. We set :  \[\mathbf{P}_j = \sum_{j' \leqslant j-1} \Delta_{j'} \text{ and } \dot{\mathbf{P}}_j=\chi(2^{-j}D).\]

In the case of the torus $\mathbb{T}^3$ we construct a similar decomposition, with a bump function $\varphi \in \mathcal{D}(B(0,2))$ such that $\varphi =1$ on $B(0,1)$. Let $(e_n)_{n \in \mathbb{Z}^3}$ be the hilbertian sequence of $L^2(\mathbb{T}^3)$ defined by $x \mapsto e_n(x)=e^{2i\pi n\cdot x}$. For a function $u=\sum_{n \in \mathbb{Z}^3} c_ne_n$, and for $j \geqslant 0$,  we set $\mathbf{P}_ju=\sum_{n \in \mathbb{Z}^3} \varphi(2^{-j}|n|)c_ne_n$ and $\Delta_ju:=\mathbf{P}_{j}u-\mathbf{P}_{j-1}u$, with the convention that $\mathbf{P}_{-1}=0$

An account of useful facts in Littlewood-Paley theory is given in Appendix~\ref{1appendixA}.

The randomization that is widely used in the context of the torus $\mathbb{T}^3$ or more generally a compact manifold is presented in \cite{burqTzvetkov2}.  Consider $(X_n)_{n \in \mathbb{Z}^3}$ a sequence of random variables defined on a probability space $(\Omega,\mathcal{A},\mathbb{P})$ which satisfy the following definition. 

\begin{definition}\label{1defAdmissible} Let $(\Omega, \mathcal{F}, \mathbb{P})$ be a probability space and $\left(X_n^{(i)}\right)_{n \in \mathbb{Z}^3 ; i=0,1}$ be a sequence of complex random variables defined on $\Omega$, satisfying the symmetry property $X_{-n}^{(i)}=\overline{X_n^{(i)}}$ and such that the random variables \[\left(X_0^{(i)},\operatorname{Re}\left(X_n^{(i)}\right), \operatorname{Im}\left(X_n^{(i)}\right)\right)_{n \in I ; i=0,1}\] where \[I=\left(\mathbb{Z}_+ \times \{0\}^{2}\right) \cup \left(\mathbb{Z} \times \mathbb{Z}_+ \times \{0\}\right) \cup \left(\mathbb{Z}^2 \times \mathbb{Z}_+ \right)\,,\] are independent; and that there exists a constant $c>0$ such that 
\begin{equation}
\label{1eqExp}
    \mathbb{E}\left[e^{\gamma X_n^{(i)}}\right] \leqslant e^{c\gamma ^2}
\end{equation}
for all $\gamma \in \mathbb{R}$ when $n=0$, for all $\gamma \in \mathbb{R}^2$ when $n \neq 0$.
\end{definition}

\begin{remark} Definition~\ref{1defAdmissible} immediately implies that the $X^{(i)}_0$, $i=0,1$ are real-valued and that the $X_n^{(i)}$ are of mean zero. Note that complex-valued Gaussian random variables, standard Bernoulli or random variables with compact support distributions satisfy the hypothesis of Definition~\ref{1defAdmissible}. 
\end{remark}

Every $u\in L^2(\mathbb{T}^3)$ can be written in the hilbertian basis $(e_n)_{n \in \mathbb{Z}}$ as $u=\sum_{n \in \mathbb{Z}^d} u_n e_n$ with $u_n$ the Fourier coefficients of $u$. We introduce the \textit{Fourier randomization} associated to a couple $(u_0,u_1)$ with the randomization map: 
\begin{equation}
    \label{1rand1}
    \Theta_{(u_0,u_1)} : \left \{
    \begin{array}{ccc}
     \Omega &  \longrightarrow & \{\text{ map from } \mathbb{T}^3 \text{ to } \mathbb{C}\}  \\
     \omega & \longmapsto & \left(\displaystyle\sum_{n \in \mathbb{Z}^3} X^{(0)}_n(\omega)u_{0,n}e_n, \displaystyle\sum_{n \in \mathbb{Z}^3} X^{(1)}_n(\omega)u_{1,n}e_n\right)\,.
    \end{array}
    \right. 
\end{equation}

There is a similar procedure in the euclidean setting. In this case the standard randomization setup is called the \textit{Wiener randomization} and randomizes frequencies annuli in a way that we explain. Note that a rough version of that randomization was introduced by Wiener in \cite{wiener}, and that the smooth version used here has been developed by Benyi-Oh-Pocovnicu in \cite{benyiOhPocovnicu}. Let $\psi \in \mathcal{D}(]-1,1[^3)$ with the symmetry property $\psi(-\xi)=\overline{\psi (\xi)}$ for all $\xi \in \mathbb{R}^3$ and satisfying the unit partition condition $\sum_{n \in \mathbb{Z}^3} \psi (\cdot -n) =1$, so that for every $u \in \mathcal{S}'$ there holds \[u=\sum_{n \in \mathbb{Z}^3} \psi (D-n)u\,.\]
One readily sees that
\begin{equation}
\label{1summationCondition}
\xi \longmapsto \sum_{n \in \mathbb{Z}^3} |\psi (\xi -n)|^2 \text{ is bounded.}
\end{equation}
The randomization of a couple $(u_0,u_1)$ is then defined with the randomization map 
\begin{equation}
    \label{1rand2}
    \Theta_{(u_0,u_1)} : \left \{
\begin{array}{ccc}
     \Omega &  \longrightarrow & \{\text{ map from } \mathbb{R}^3 \text{ to } \mathbb{C}\}  \\
     \omega & \longmapsto & \left(\displaystyle\sum_{n \in \mathbb{Z}^3} X_n^{(0)}(\omega) \psi (D-n)u_0,\displaystyle\sum_{n \in \mathbb{Z}^3} X_n^{(1)}(\omega) \psi (D-n)u_1 \right)\,.
\end{array}
\right. 
\end{equation}
In both cases (randomization in $\mathbb{T}^3$ or $\mathbb{R}^3$) we set $\mu_{(u_0,u_1)}(A) := \mathbb{P}\left(\Theta _{(u_0,u_1)}^{-1}(A)\right)$ for every measurable set $A$, and define \[\mathcal{M}^s(U):=\bigcap_{(u_0,u_1)\in H^s(U) \times H^{s-1}(U)} \mu_{(u_0,u_1)}\,,\] the set of all measures that we will work with. 

\begin{remark} In the following, when there is no possible confusion we will denote by $(u_0,u_1)$ the couple of random variables defined by the randomization maps \eqref{1rand1} and \eqref{1rand2}, rather than $(u_0^{\omega},u_1^{\omega})$ or any other notation. 
\end{remark}

These randomizations have been studied in \cite{burqTzvetkov2}, in which Burq-Tzvetkov proved the flolowing theorem. For a proof see Lemma B.1 in \cite{burqTzvetkov2} and also Lemma~2.2 in \cite{benyiOhPocovnicu}. 

\begin{theorem}[Non-smoothing effect of the randomization setup] Let $\mu = \mu_{(u_0,u_1)} \in \mathcal{M}^s(U)$, $U=\mathbb{R}^3$ or $\mathbb{T}^3$. We have the following: 
\begin{enumerate}[label=(\textit{\roman*})]
    \item The measure $\mu$ is supported by $\mathcal{H}^s(U)$.
    \item For $s'>s$ if $(u_0,u_1) \notin \mathcal{H}^{s'}(U)$ then $\mu(\mathcal{H}^{s'}(U))=0$. 
    \item In the case of the torus, if all the Fourier coefficients of $u_0$ and $u_1$ are different from zero and the support of the distributions of the $\left(X_0^{(i)},\operatorname{Re}(X_n^{(i)}),\operatorname{Im}(X_n^{(i)})\right)_{n\in I \; ; \; i=0,1}$ are $\mathbb{R}$ then the support of $\mu$ is exactly $\mathcal{H}^s(U)$. 
\end{enumerate}
\end{theorem}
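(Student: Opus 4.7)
The plan is to handle the three statements separately by standard random series techniques.

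For (i), I would run a second-moment computation. In the torus case, independence of $(X_n^{(0)})_n$ together with Fubini yields
$$\mathbb{E}\bigl[\|u_0^\omega\|_{H^s}^2\bigr] = \sum_{n\in\mathbb{Z}^3} \mathbb{E}\bigl[|X_n^{(0)}|^2\bigr]\,|u_{0,n}|^2 \langle n\rangle^{2s}.$$
The exponential moment bound \eqref{1eqExp}, combined with $\mathbb{E}[X_n^{(0)}]=0$, forces $\mathbb{E}[|X_n^{(0)}|^2] \lesssim 1$ (match the quadratic Taylor coefficients of $\mathbb{E}[e^{\gamma X_n^{(0)}}]$ and $e^{c\gamma^2}$ at $\gamma=0$). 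The right-hand side is thus $\lesssim \|u_0\|_{H^s}^2 < \infty$, giving $u_0^\omega \in H^s$ almost surely; the same treatment works for $u_1^\omega$ in $H^{s-1}$. In the Wiener case I would use the bounded-overlap property \eqref{1summationCondition} to control $\mathbb{E}\|\sum_n X_n^{(0)} \psi(D-n)u_0\|_{H^s}^2$ by $\|u_0\|_{H^s}^2$.

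For (ii), I would invoke a $0$–$1$ law. The event $A := \{u_0^\omega \in H^{s'}\}$ belongs to the tail $\sigma$-algebra of $(X_n^{(0)})_n$, since altering finitely many Fourier modes does not affect membership in $H^{s'}$. Hence by Kolmogorov's $0$–$1$ law, $\mathbb{P}(A)\in\{0,1\}$. To exclude the value $1$, I would extract from Definition~\ref{1defAdmissible} a lower bound $\mathbb{E}[|X_n^{(0)}|^2] \gtrsim 1$ (non-degeneracy of the laws), and apply Fatou to the truncations:
$$ \mathbb{E}\bigl[\|u_0^\omega\|_{H^{s'}}^2\bigr] \geq \liminf_N \mathbb{E}\bigl[\|\mathbf{P}_N u_0^\omega\|_{H^{s'}}^2\bigr] \gtrsim \|u_0\|_{H^{s'}}^2 = +\infty,$$
so $\|u_0^\omega\|_{H^{s'}} = +\infty$ a.s. and $\mathbb{P}(A)=0$. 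The argument for $u_1^\omega$ is identical.

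For (iii), thanks to (i) it suffices to prove $\mathcal{H}^s \subset \operatorname{supp}\mu$. Given $(v_0,v_1)\in\mathcal{H}^s$ and $\varepsilon>0$, I would fix a Littlewood–Paley threshold $N$ large enough that both $\|\mathbf{P}_N^\perp v_0\|_{H^s}$ and $\mathbb{E}\|\mathbf{P}_N^\perp u_0^\omega\|_{H^s}^2$ (the tail of the series from (i)) are $\ll \varepsilon^2$. Chebyshev then gives the high-frequency event $\{\|\mathbf{P}_N^\perp(u_0^\omega - v_0)\|_{H^s} < \varepsilon\}$ probability at least $1/2$. For the low frequencies, the hypotheses that every $u_{0,n}\neq 0$ and that each $X_n^{(0)}$ has support $\mathbb{R}$ make the finite-dimensional event $\{|X_n^{(0)}(\omega) u_{0,n} - v_{0,n}| < \varepsilon'\text{ for all } |n|\le N\}$ a finite intersection of positive-probability open events on independent coordinates, hence itself of positive probability. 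Since these two events are measurable with respect to disjoint index sets of the independent family, their intersection has positive probability, producing $\mu(B_\varepsilon(v_0,v_1)) > 0$. The case of $u_1$ is handled in parallel.

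I expect the main subtlety to lie in (ii): Definition~\ref{1defAdmissible} as literally written does not exclude $X_n\equiv 0$, so closing the $0$–$1$ argument relies on the standing (and standard) assumption that the laws are non-trivial — this is tacit in \cite{burqTzvetkov2} and justifies the Paley–Zygmund-type lower bound $\mathbb{E}[|X_n^{(0)}|^2]\gtrsim 1$. Once that point is fixed, the remaining arguments are routine applications of Fubini, Kolmogorov's $0$–$1$ law, Chebyshev, and independence.
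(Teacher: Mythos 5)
The paper itself gives no proof of this statement; it cites Lemma~B.1 of \cite{burqTzvetkov2} and Lemma~2.2 of \cite{benyiOhPocovnicu}. So the comparison below is against what a complete argument actually requires.

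Your parts (i) and (iii) are essentially correct. For (i) the second-moment computation via Fubini/orthogonality (using $\mathbb{E}[|X_n^{(i)}|^2]\lesssim 1$ from \eqref{1eqExp}) and the bounded-overlap property \eqref{1summationCondition} in the Wiener case does the job. For (iii), splitting into a high-frequency tail controlled by Chebyshev and a finite-dimensional low-frequency part on which the full-support hypothesis gives positive probability is the standard route, and the independence over disjoint index sets (taken in $I$, not all of $\mathbb{Z}^3$, because of the constraint $X_{-n}=\overline{X_n}$) lets you multiply probabilities.

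There is, however, a genuine gap in (ii). After the $0$--$1$ law you correctly reduce to excluding $\mathbb{P}(A)=1$, but the Fatou step only yields $\mathbb{E}\bigl[\|u_0^\omega\|_{H^{s'}}^2\bigr]=+\infty$, and \emph{this does not imply} $\|u_0^\omega\|_{H^{s'}}=+\infty$ almost surely: a nonnegative random variable can be a.s.\ finite with infinite second moment, and the $0$--$1$ law says nothing about moments. The correct deduction uses the series structure. Writing $\|u_0^\omega\|_{H^{s'}}^2 = \sum_n Y_n$ with $Y_n := |X_n^{(0)}|^2\,|u_{0,n}|^2\langle n\rangle^{2s'}$ (indices in the independent family), this is a sum of independent nonnegative random variables; by the Kolmogorov three-series theorem it converges a.s.\ if and only if $\sum_n \mathbb{E}\bigl[\min(Y_n,1)\bigr]<\infty$. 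Combining the non-degeneracy lower bound $\mathbb{E}[|X_n^{(0)}|^2]\gtrsim 1$ with the fourth-moment bound from \eqref{1eqExp} yields, via Paley--Zygmund, a uniform lower bound $\mathbb{P}(|X_n^{(0)}|^2 \geq c) \geq c'>0$, whence $\mathbb{E}[\min(Y_n,1)]\gtrsim \min(a_n,1)$ with $a_n:= |u_{0,n}|^2\langle n\rangle^{2s'}$. Since $\sum a_n=\infty$ forces $\sum\min(a_n,1)=\infty$, the series $\sum Y_n$ diverges a.s., and only then is $\mathbb{P}(A)=0$. So the conclusion is right, but closing (ii) requires three-series/Paley--Zygmund rather than Fatou. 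Your own caveat about non-degeneracy of the laws is well-taken; without it even the corrected argument fails.
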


This theorem proves that there is no gain in regularity by randomization. However a gain in integrability is known, see Theorem~\ref{1thKolmogorov}, and is responsible for the improvement in Strichartz inequalities and thus the local wellposedness and global well-posedness theory in dispersive equations.  

\subsection{Probabilistic semigroup estimates}

The starting point of every probabilistic improvement of the Strichartz estimates is the following well-known theorem in the theory of random Fourier series. For a proof see Lemma~3.1 in \cite{burqTzvetkov2} and Lemma~2.1 in \cite{ohPocovnicu}. 

\begin{theorem}[Kolmogorov-Paley-Zygmund]\label{1thKolmogorov} Let $(X_n)_{n \in \mathbb{Z}}$ be a sequence of independent and identically distributed real-valued random variables such that there exists a constant $c>0$ such that for every $\gamma \in \mathbb{R}$, the inequality \eqref{1eqExp} holds. Let $(a_n)_{n \in \mathbb{Z}} \in \ell^2(\mathbb{Z})$ be a complex valued sequence with the symmetry property $a_{-n} = \overline{a_n}$ for every integer $n$ (resp. a real-valued sequence $(a_n)_{n \in \mathbb{N}} \in \ell^{2}(\mathbb{N})$). For $q \in [1,\infty)$ one has: 
    \begin{equation}\label{1kolmogorovPaley}
    \left \| \sum_{n \in \mathbb{Z}}  a_n X_n \right \|_{L^q} \lesssim  \sqrt{q} \|(a_n)_{n \in \mathbb{Z}}\|_{\ell^2}.
    \end{equation}
\end{theorem}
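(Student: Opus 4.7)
The plan is to combine the subgaussian moment hypothesis \eqref{1eqExp} with the Chernoff bound and the layer-cake representation of $L^q$ norms. The first step is a reduction to real-valued sequences: in the complex symmetric setting, writing $a_n = \alpha_n + i\beta_n$ and using that $(X_n)$ is real-valued, the sum splits as $\sum_n a_n X_n = \sum_n \alpha_n X_n + i \sum_n \beta_n X_n$, so both the real and imaginary parts are real linear combinations of the $X_n$'s whose coefficient $\ell^2$ norms are bounded by $\|a\|_{\ell^2}$. Hence it suffices to establish the estimate for a real sequence.

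For a finite partial sum $S_N = \sum_{n \leqslant N} a_n X_n$ with $a_n \in \mathbb{R}$, independence of the $X_n$'s together with \eqref{1eqExp} applied factor by factor yields
\begin{equation*}
\mathbb{E}\bigl[e^{t S_N}\bigr] = \prod_{n \leqslant N} \mathbb{E}\bigl[e^{t a_n X_n}\bigr] \leqslant \prod_{n \leqslant N} e^{c t^2 a_n^2} \leqslant e^{c t^2 \|a\|_{\ell^2}^2},
\end{equation*}
for every $t \in \mathbb{R}$. Markov's inequality applied to $\pm S_N$, followed by the optimal choice $t = \lambda/(2c\|a\|_{\ell^2}^2)$, produces the subgaussian tail bound
\begin{equation*}
\mathbb{P}\bigl(|S_N| > \lambda\bigr) \leqslant 2 \exp\!\left(-\frac{\lambda^2}{4c\|a\|_{\ell^2}^2}\right).
\end{equation*}

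Inserting this bound in the layer-cake identity and using the substitution $u = \lambda^2/(4c\|a\|_{\ell^2}^2)$ gives
\begin{equation*}
\|S_N\|_{L^q(\Omega)}^q = q \int_0^\infty \lambda^{q-1} \mathbb{P}(|S_N| > \lambda) \, \mathrm{d}\lambda \lesssim q \, (4c\|a\|_{\ell^2}^2)^{q/2} \, \Gamma\!\left(\tfrac{q}{2}\right),
\end{equation*}
and Stirling's formula $\Gamma(q/2)^{1/q} \lesssim \sqrt{q}$, together with the boundedness of $(2q)^{1/q}$, converts this into the desired estimate $\|S_N\|_{L^q(\Omega)} \lesssim \sqrt{q}\,\|a\|_{\ell^2}$, uniformly in $N$. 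The regime $q \in [1,2]$ requires no extra care, since Jensen's inequality on the probability space gives $\|S_N\|_{L^q} \leqslant \|S_N\|_{L^2} \lesssim \|a\|_{\ell^2}$ by orthogonality alone.

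The remaining step is to pass to the limit $N \to \infty$: orthogonality forces $(S_N)$ to be Cauchy in $L^2(\Omega)$, hence almost surely convergent along a subsequence, and the uniform $L^q$ estimate combined with Fatou's lemma transfers the bound to the full series $S = \sum_{n \in \mathbb{Z}} a_n X_n$. The main conceptual point of the proof is the Chernoff-type tail estimate, which is precisely where the quantitative subgaussian hypothesis \eqref{1eqExp} enters; the $\sqrt{q}$ growth is the signature of such tails, as opposed to the linear-in-$q$ growth that subexponential variables would produce.
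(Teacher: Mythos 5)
The paper does not actually supply a proof of this lemma; it cites Lemma~3.1 of \cite{burqTzvetkov2} and Lemma~2.1 of \cite{ohPocovnicu}, so there is no in-text proof to compare against line by line. Your argument is correct and self-contained. Its core step, the moment generating function bound $\mathbb{E}[e^{tS_N}]\leqslant e^{ct^2\|a\|_{\ell^2}^2}$ obtained from independence and \eqref{1eqExp}, is shared by essentially every proof of this statement; you then pass to the $L^q$ estimate via the Chernoff tail bound and the layer-cake formula together with Stirling on $\Gamma(q/2)^{1/q}$. The cited sources go from the same exponential bound to the moment estimate slightly more directly, comparing $|x|^q$ with $e^{tx}+e^{-tx}$ and optimising in $t$, thereby bypassing the tail estimate; the two variants are elementary, equivalent, and both exhibit the $\sqrt q$ growth. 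Two small completeness remarks: with index set $\mathbb{Z}$ the partial sums should be $S_N=\sum_{|n|\leqslant N}a_nX_n$, and the $L^2$-Cauchy step implicitly uses that $\mathbb{E}[X_n]=0$ and $\mathbb{E}[X_n^2]<\infty$, both of which follow from \eqref{1eqExp} (the first by expanding $\mathbb{E}[e^{\gamma X_n}]\leqslant e^{c\gamma^2}$ to first order as $\gamma\to 0$, as the paper itself records in the remark after Definition~\ref{1defAdmissible}).
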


We turn to the probabilistic improvement of Strichartz estimates and introduce semi-groups associated to the linear wave equation. Let $s \geqslant 0$ and $(u_0,u_1) \in \mathcal{H}^s(U)$. We set:
\[z(t):=S(t)(u_0,u_1):= \cos(t |\nabla|)u_0 + \frac{\sin(t|\nabla|)}{|\nabla|}u_1,\]
which is a solution to \eqref{1LW}. One of the key features of the linear wave equation is that ``two time derivatives equal two space derivatives'' so that one expects that ``one time derivative equals one space derivative'' which can be turned more rigorously writing $\partial _t z(t)=\langle \nabla \rangle \tilde{z}(t)$ where
\[
\tilde{z}(t):=\tilde{S}(t)(u_0,u_1) := - \frac{|\nabla|\sin(t |\nabla|)}{\langle \nabla \rangle}u_0 + \frac{\cos(t|\nabla|)}{\langle \nabla \rangle}u_1.
\]
For our purposes we will need a smooth version of both $z(t)$ and $\tilde{z} (t)$, namely
\begin{equation}
    \label{1smoothSemiGroup}
    z_n(t):=S_n(t)(u_0,u_1):=\mathbf{P}_n S(t)(u_0,u_1) \text{  and  } \tilde{z}_n(t):=\tilde{S}_n(t)(u_0,u_1):=\mathbf{P}_n \tilde{S}(t)(u_0,u_1)
\end{equation}
for every integer $n \geqslant 1$. We recall the probabilistic Strichartz estimates, proven in \cite{pocovnicu,ohPocovnicu2} for~\eqref{1probabilisticStrichartzLebesgue} and \cite{ohPocovnicu,ohPocovnicu2} for \eqref{1probabilisticStrichartzLebesgue2}.

\begin{proposition}[Probabilistic Strichartz estimates for the wave operator] \label{1propStrichartz} Let $U$ standing for either $\mathbb{T}^3$ or $\mathbb{R}^3$. Let $(u_0,u_1) \in \mathcal{H}^s(U)$ and still write $(u_0,u_1)$ its randomization (Fourier randomization procedure or Wiener randomization procedure). Let $z^*$ stand for either $z,\tilde{z},z_n$ or~$\tilde{z}_n$. For any $q_1\in[1,\infty)$ and $q_2 \in [2,\infty]$: 
\begin{equation}
\label{1probabilisticStrichartzLebesgue}    
\mathbb{P}(\|z^*\|_{L^{q_1}((0,T),W^{s,q_2}(\Omega))} > \lambda) \lesssim_{\varepsilon}\exp \left( - \frac{c\lambda ^2}{\max \left\{T^{\frac{2}{q_1}},T^{2+\frac{2}{q_1}}\right\}\|(u_0,u_1)\|_{\mathcal{H}^{s+\varepsilon}}^2}\right)
\end{equation}
with $\varepsilon = 0$ if $q_2 < \infty$ and $\varepsilon >0$ arbitrarily small otherwise. \\ For any $q_2 \in [2,\infty]$ and arbitrarily small $\varepsilon >0$:
\begin{equation}
\label{1probabilisticStrichartzLebesgue2}    
\mathbb{P}(\|z^*\|_{L^{\infty}((0,T),W^{s,q_2}(\Omega))} > \lambda) \lesssim_{\varepsilon} (1+T)\exp \left( - \frac{c\lambda ^2}{\max\{1,T^2\}\|(u_0,u_1)\|_{\mathcal{H}^{s+\varepsilon}}^2}\right)\,.
\end{equation}
\end{proposition}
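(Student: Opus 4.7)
The strategy is the classical Burq--Tzvetkov moment-method / Chebyshev optimization. Fix one of the four semi-groups, say $z$ (the other cases, including the smoothed versions $z_n$ and $\tilde z_n$, differ only by uniform Fourier multipliers and cost nothing). In either randomization setup $z(t,x,\omega)$ is a linear combination $\sum_n X_n^{(0)}(\omega) a_n(t,x) + X_n^{(1)}(\omega) b_n(t,x)$ with deterministic functions $a_n, b_n$. The first step is to trade probability for integrability: for $q \geqslant \max(q_1, q_2)$, the Minkowski inequality gives
\[
\bigl\| \|z\|_{L^{q_1}_t W^{s,q_2}_x} \bigr\|_{L^q_\omega}
\leqslant
\bigl\| \| z \|_{L^q_\omega} \bigr\|_{L^{q_1}_t W^{s,q_2}_x}.
\]
For each $(t,x)$ the Kolmogorov--Paley--Zygmund inequality (Theorem~\ref{1thKolmogorov}) applied pointwise to $\langle\nabla\rangle^s z$ bounds the inner $L^q_\omega$ norm by $\sqrt{q}$ times the $\ell^2_n$ square function $\bigl(\sum_n |\langle\nabla\rangle^s a_n(t,x)|^2 + |\langle\nabla\rangle^s b_n(t,x)|^2\bigr)^{1/2}$. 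A Chebyshev argument then yields
\[
\mathbb{P}\bigl(\|z\|_{L^{q_1}_t W^{s,q_2}_x} > \lambda\bigr)
\leqslant
\lambda^{-q} (C\sqrt{q})^q M^q,
\]
where $M$ is the deterministic bound on the square function in $L^{q_1}_t L^{q_2}_x$; optimizing in $q$ (take $q \sim \lambda^2/(C M)^2$) converts this into the stated Gaussian tail $\exp(-c\lambda^2/M^2)$.

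It remains to control $M$ by a (possibly $\varepsilon$-weakened) Sobolev norm of the data. The square function sits in a Besov-type space because of the frequency localization built into both the Fourier and Wiener randomizations: the functions $\psi(D-n)u_0$ (resp.\ Fourier projectors $u_{0,n} e_n$) have essentially disjoint Fourier support, so up to a finite overlap the square function in $x$ equals the $L^2_n$ norm of $\cos(t|\nabla|)\psi(D-n)u_0$, etc. The time integrability factor $T^{1/q_1}$ comes trivially from integrating on $[0,T]$ a uniform-in-time pointwise $\ell^2_n$ bound (since the wave group is unitary on $L^2$), which explains the $\max\{T^{2/q_1}, T^{2+2/q_1}\}$ (the second exponent accommodates the $\partial_t$ factor in $\tilde z$, which costs one extra power of $T$ when absorbing the $\langle\nabla\rangle^{-1}$ on low frequencies). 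For $q_2 < \infty$ one concludes by a square-function / Littlewood--Paley characterization of $W^{s,q_2}$ combined with Bernstein, using that each frequency piece $\psi(D-n)u_0$ is spectrally localized in a unit cube, which loses no derivatives. For $q_2 = \infty$, however, $\ell^2_j \to L^\infty$ is false, so one replaces $L^\infty$ by the Besov embedding $B^{\varepsilon}_{\infty,2} \hookrightarrow L^\infty$ (Appendix~\ref{1appendixA}); this is precisely the source of the arbitrarily small $\varepsilon$-loss.

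The $L^\infty$-in-time estimate \eqref{1probabilisticStrichartzLebesgue2} is reduced to \eqref{1probabilisticStrichartzLebesgue} by a Sobolev embedding in the time variable. Choose a large but finite $q_1 = q_1(\varepsilon)$ and use $W^{1,q_1}(0,T) \hookrightarrow L^\infty(0,T)$, so that for any $(x,\omega)$
\[
\|z\|_{L^\infty_t W^{s,q_2}_x} \lesssim \|z\|_{L^{q_1}_t W^{s,q_2}_x} + \|\partial_t z\|_{L^{q_1}_t W^{s,q_2}_x},
\]
and $\partial_t z = -|\nabla|\sin(t|\nabla|)u_0 + \cos(t|\nabla|)u_1 \simeq \langle\nabla\rangle \tilde z$. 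Applying \eqref{1probabilisticStrichartzLebesgue} to each of the two terms with regularity index $s$ (resp.\ $s$ applied to $\tilde z$, which amounts to regularity $s+\varepsilon$ on the data because of the derivative loss) and summing a union bound over a partition of $[0,T]$ into $O(T)$ unit intervals (to cover the case $T \gg 1$) produces the stated prefactor $(1+T)$ and the $\max\{1,T^2\}$ in the denominator. The main technical obstacle is the $q_2 = \infty$ (and the $L^\infty_t$) case, where a naive pointwise-in-$x$ $\ell^2$ bound fails and one must pay the Besov $\varepsilon$-loss; everything else is routine frequency-by-frequency bookkeeping combined with the moment method.
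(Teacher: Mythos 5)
Your Step-1 moment method for the $q_1<\infty$ bound \eqref{1probabilisticStrichartzLebesgue} is essentially the route the paper uses (in the slightly more general Besov form, Proposition~\ref{1propStrichartzBesov}, which subsumes \eqref{1probabilisticStrichartzLebesgue}): Minkowski for $q\geqslant\max(q_1,q_2)$, Kolmogorov--Paley--Zygmund pointwise in $(t,x)$, Bernstein plus the near-orthogonality of the $\psi(D-n)$ pieces, and Chebyshev optimized in $q$. (Two small slips there: the extra power of $T$ comes from $|\sin(t|\xi|)/|\xi||\leqslant t$ on the low-frequency piece of $z$ itself, not from a $\langle\nabla\rangle^{-1}$ in $\tilde z$, which is already $L^2$-bounded; and the $q_2=\infty$ loss comes from $B^\varepsilon_{q_2,r}\hookrightarrow B^0_{\infty,r}$ for $q_2\gtrsim 3/\varepsilon$, not from $B^\varepsilon_{\infty,2}\hookrightarrow L^\infty$. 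Neither affects the structure.)

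The $L^\infty_t$ step, however, has a genuine gap. You invoke the integer-order embedding $W^{1,q_1}(0,T)\hookrightarrow L^\infty(0,T)$ and control $\|\partial_t z\|_{L^{q_1}_t W^{s,q_2}_x}$. But $\partial_t z=\langle\nabla\rangle\tilde z$, so this term is $\tilde z$ measured in $W^{s+1,q_2}_x$, which requires $(u_0,u_1)\in\mathcal{H}^{s+1+\varepsilon}$ --- a full derivative loss, not the $\varepsilon$ claimed in \eqref{1probabilisticStrichartzLebesgue2}. Your parenthetical ``$s$ applied to $\tilde z$, which amounts to regularity $s+\varepsilon$'' silently drops the $\langle\nabla\rangle$ factor. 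To only pay $\varepsilon$ you must replace the time-embedding by the fractional one $W^{\theta,q}(0,T)\hookrightarrow L^\infty(0,T)$ with $\theta q>1$ and $\theta$ small, and then use the wave relation $\langle D_t\rangle^\theta e^{it|\nabla|}\phi=\langle\nabla\rangle^\theta e^{it|\nabla|}\phi$ to convert the fractional time derivative into a fractional spatial one. This is exactly what the paper's own Step~4 of the proof of Proposition~\ref{1propStrichartzBesov} does, with the added wrinkle that commuting $\langle D_t\rangle^\theta$ past the weight $\langle t\rangle^{-\theta}$ requires a pseudo-differential (or Coifman--Meyer) commutator bound; Oh--Pocovnicu instead avoid this via an explicit series representation for $z(t)$. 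Either way, the key ingredient your sketch is missing is the fractional-in-time derivative combined with the observation that time regularity for the wave group is free at the level of spatial regularity, not a crude $\partial_t$-plus-fundamental-theorem estimate. Your union-bound-over-unit-intervals idea is a reasonable way to get the $(1+T)$ prefactor once the single-unit-interval estimate is in place, so only the estimate on a unit interval needs repair.
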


For our purposes we will need a counterpart of Proposition~\ref{1propStrichartz} in the context of Besov spaces:

\begin{proposition}[Besov norm probabilistic Strichartz estimates]\label{1propStrichartzBesov} Let $(u_0,u_1) \in \mathcal{H}^s(U)$ where $U$ stands for either $\mathbb{T}^3$ or $\mathbb{R}^3$ and still write $(u_0,u_1)$ its randomization (Fourier randomization procedure or Wiener randomization procedure). Let $z^*$ stand for either $z,\tilde{z},z_n$ or $\tilde{z}_n$. For any $q_1 \in [1,\infty)$, $q_2 \in [2,\infty]$ and $r \in [1,\infty]$: 
\begin{equation}\label{1probaStrichartzBesov}
    \mathbb{P}\left( \|z^*\|_{L^{q_1}((0,T),B_{q_2,r}^s)} > \lambda \right) \lesssim_{\varepsilon} \exp \left( - c \frac{\lambda ^2}{\max\{T^{\frac{2}{q_1}},T^{2+\frac{2}{q_1}}\}\|(u_0,u_1)\|_{\mathcal{H}^{s+\varepsilon}}^2}\right)
\end{equation}
with $\varepsilon = 0$ if $q_2 < \infty$ and $r \geqslant 2$; $\varepsilon >0$ otherwise. \\
For any $q_2\in [2,\infty]$ and $r \in [1,\infty]$: 
\begin{equation}\label{1probaStrichartzBesov2}
    \mathbb{P}\left( \|z^*\|_{L^{\infty}((0,T),B_{q_2,r}^s)} > \lambda \right) \lesssim_{\varepsilon} \exp \left( - c(\varepsilon) \frac{\lambda ^2}{\langle T \rangle ^{2\varepsilon}\|(u_0,u_1)\|_{\mathcal{H}^{s+\varepsilon}}^2}\right)
\end{equation}
with $\varepsilon >0$.
\end{proposition}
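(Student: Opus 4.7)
The plan is to follow the blueprint for Proposition~\ref{1propStrichartz}, combining Chebyshev's inequality with the Kolmogorov--Paley--Zygmund tail estimate (Theorem~\ref{1thKolmogorov}). First, I reduce the tail bound~\eqref{1probaStrichartzBesov} to a moment estimate of the form
\[
\mathbb{E}\bigl[\|z^*\|^p_{L^{q_1}_T B^{s}_{q_2,r}}\bigr]^{1/p} \;\leqslant\; C\sqrt{p}\,\max\bigl\{T^{1/q_1},T^{1+1/q_1}\bigr\}\,\|(u_0,u_1)\|_{\mathcal{H}^{s+\varepsilon}}
\]
valid for $p$ larger than some $p_0 = p_0(q_1,q_2,r)$, after which the usual optimization (take $p$ proportional to $\lambda^2$ divided by the square of the right-hand side) produces the announced Gaussian tail.

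For the moment estimate, I begin with the favourable regime $q_2 < \infty$ and $r \geqslant 2$, in which I expect $\varepsilon = 0$. Fix $p \geqslant \max(q_1,q_2,r)$. Three applications of the Minkowski integral inequality, each legitimate because the exterior exponent $p$ dominates the interior one, push the $L^p_\omega$ norm past $L^{q_1}_T$, $\ell^r_j$ and $L^{q_2}_x$ to the innermost position:
\[
\mathbb{E}\bigl[\|z^*\|^p_{L^{q_1}_T B^{s}_{q_2,r}}\bigr]^{1/p} \;\leqslant\; \Bigl\|\bigl(2^{js}\|\Delta_j z^*(t,x)\|_{L^p_\omega}\bigr)_j\Bigr\|_{L^{q_1}_T \ell^r L^{q_2}_x}.
\]
For fixed $(t,x)$, $\Delta_j z^*(t,x)$ is a linear combination of the independent subgaussian variables from Definition~\ref{1defAdmissible}, so Theorem~\ref{1thKolmogorov} gives $\|\Delta_j z^*(t,x)\|_{L^p_\omega} \lesssim \sqrt{p}\,\|\Delta_j z^*(t,x)\|_{L^2_\omega}$. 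Expanding the $L^2_\omega$-norm by independence produces a deterministic square function that, by the energy identity of the wave group and the unit-frequency localization of the randomization blocks $\psi(D-n)u$ (respectively their Fourier analogue on $\mathbb{T}^3$), is controlled by the dyadic piece of $\|(u_0,u_1)\|_{\mathcal{H}^s}$; the bookkeeping of the time growth is exactly that of~\eqref{1probabilisticStrichartzLebesgue} and yields the factor $\max\{T^{1/q_1},T^{1+1/q_1}\}$.

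The remaining regimes are reduced to the previous one via Besov embeddings, at the cost of an arbitrarily small loss $\varepsilon > 0$. If $q_2 = \infty$, pick a finite $q_2' \geqslant 2$ with $3/q_2' < \varepsilon$; the standard embedding $B^{s+\varepsilon}_{q_2',r}(U) \hookrightarrow B^{s}_{\infty,r}(U)$ reduces the problem to the case $q_2' < \infty$. If $r < 2$ and $q_2 < \infty$, H\"older in the Littlewood--Paley index gives the trivial embedding $B^{s+\varepsilon}_{q_2,2}(U) \hookrightarrow B^{s}_{q_2,r}(U)$, which reduces to the case $r = 2$. Finally, the $L^\infty_t$ estimate~\eqref{1probaStrichartzBesov2} follows from~\eqref{1probaStrichartzBesov} by combining the Sobolev embedding in time $W^{1/q+\varepsilon,q}_t \hookrightarrow L^\infty_t$ (for finite $q$) with the identity $\partial_t z = \langle\nabla\rangle \tilde z$ that trades a time derivative against a space derivative and only costs an $\varepsilon$-factor of regularity; this produces the stated growth $\langle T\rangle^{2\varepsilon}$.

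The main technical obstacle is the chain of Minkowski exchanges paired with the pointwise application of Kolmogorov--Paley--Zygmund, and in particular checking that, after passing to the $L^2_\omega$-norm, the resulting deterministic Littlewood--Paley square function is summable in $\ell^r_j$ and controlled uniformly in $j$ by $\|(u_0,u_1)\|_{\mathcal{H}^s}$. This last point uses the non-smoothing character of the randomization, and is the only step in which the argument differs substantively from the proof of Proposition~\ref{1propStrichartz}.
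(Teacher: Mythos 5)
Your treatment of the $q_1 < \infty$ cases --- the Minkowski exchanges that push $L^p_\omega$ to the innermost level, the pointwise application of Kolmogorov--Paley--Zygmund (Theorem~\ref{1thKolmogorov}), and the Besov-embedding reductions for $q_2 = \infty$ and for $r < 2$ --- follow the paper's argument and are correct as sketched. The gap is in the $L^\infty_T$ estimate~\eqref{1probaStrichartzBesov2}, which is precisely the point where the proposition departs from~\eqref{1probabilisticStrichartzLebesgue2} and for which the paper develops a dedicated argument.

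You propose to deduce~\eqref{1probaStrichartzBesov2} from the finite-$q_1$ case via the one-dimensional Sobolev embedding $W^{\sigma,q}_t \hookrightarrow L^\infty_t$, combined with the identity $\partial_t z = \langle\nabla\rangle\tilde z$ ``that trades a time derivative against a space derivative.'' This does not go through as written. The Sobolev embedding requires control of $\|\langle D_t\rangle^\sigma z\|_{L^q_t B^s_{q_2,r}}$ with a \emph{fractional} order $\sigma\in(0,1)$, whereas $\partial_t z = \langle\nabla\rangle\tilde z$ only concerns the first integer-order time derivative and gives no direct handle on $\langle D_t\rangle^\sigma z$. The observation that actually works, used in the paper, is that after rewriting $S(t), \tilde S(t)$ in terms of exponentials $e^{\pm it|\nabla|}$, the space-time symbol is supported on the light cone and one gets $\langle D_t\rangle^\sigma e^{\pm it|\nabla|}\phi = \langle\nabla\rangle^\sigma e^{\pm it|\nabla|}\phi$ exactly. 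Even granting that, a second essential step is missing: the fractional operator $\langle D_t\rangle^\sigma$ does not commute with restriction to the time interval $(0,T)$, so one must first globalize in time. The paper does this by introducing the weight $\chi(t) = \langle t\rangle^{-\varepsilon}$, writing $\|z\|_{L^\infty((0,T),B^0_{q_2,r})} \lesssim \langle T\rangle^\varepsilon \|\chi z\|_{W^{\varepsilon,q}(\mathbb{R},B^0_{q_2,r})}$ --- which is the unique source of the $\langle T\rangle^{2\varepsilon}$ factor in~\eqref{1probaStrichartzBesov2} --- and then showing by pseudodifferential calculus that the commutator $[\langle D_t\rangle^\varepsilon,\chi]$ is of negative order, so that $\langle D_t\rangle^\varepsilon \chi = A\chi\langle D_t\rangle^\varepsilon$ with $A$ bounded on $L^q$, before invoking the global-in-time weighted estimate~\eqref{1probaStrichartzBesov3}. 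Neither the weight, nor the commutator estimate, nor the origin of the $\langle T\rangle^{2\varepsilon}$ growth appears in your sketch; without them the $L^\infty_T$ bound cannot close.
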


\begin{remark} Note that the estimate \eqref{1probaStrichartzBesov2} differs slightly from \eqref{1probabilisticStrichartzLebesgue2}. The proof presented here will indeed differ from the one in \cite{ohPocovnicu} which appears in the context of Lebesgue spaces and relies on a series representation for $z(t)$, a method that we decided not to use and present an alernative method. However, by applying the method of Oh-Pocovnicu one can prove a similar estimate.  
\end{remark}

\begin{proof}The proof follows closely the one in \cite{ohPocovnicu,ohPocovnicu2,pocovnicu} as only the parameter $r$ has been added in the analysis. Nonetheless the proof is given in quite extensive details for reader's convenience. We will only give the proof in the case $U=\mathbb{R}^3$ since the computations are almost the same in $\mathbb{T}^3$. It is indeed only the randomization setup which differs but in both cases they satisfy the same smoothing properties, see Appendix~\ref{1appendixA}. We will also assume that $z^*(t)=z(t)$, other cases could be treated in the exact same way.

\textbf{\textit{Step 1.}} Assume $s=0$, $q_1< \infty$ and $q_2< \infty$. Without loss of generality we can assume that $r < \infty$ thanks to the inequality $\|z\|_{L^{q_1}((0,T),B^{0}_{q_2,\infty})} \lesssim \|z\|_{L^{q_1}((0,T),L^{q_2})}$ and~\eqref{1probabilisticStrichartzLebesgue}. Assume first that $r \geqslant 2$. We will prove that for $p \geqslant \max \{q_1,q_2,r\}$ one has
\begin{equation}\label{1eqNormes}
    \left\| z \right\|_{L^{p}(\Omega, L^{q_1}((0,T),B^0_{q_2,r}))} \lesssim \sqrt{p} T^{1+\frac{1}{q_1}} \|(u_0,u_1)\|_{\mathcal{H}^0}. 
\end{equation}

Assume that \eqref{1eqNormes} is proved, then the Markov inequality (with the function $\lambda \mapsto \lambda ^p$) gives \[ \mathbb{P} \left( \|z\|_{L^{q_1}((0,T),B_{q_2,r}^0)} > \lambda \right) \leqslant \lambda^{-p}\left\| z \right\|_{L^{p}(\Omega, L^{q_1}((0,T),B^0_{q_2,r}))}^p\] and minimizing in $p$ yields \eqref{1probaStrichartzBesov}. It is indeed the case when the optimizing $p$ is such that $p \geqslant \max\{q_1,q_2,r\}$, otherwise just take $C$ large enough to ensure $Ce^{-\max\{q_1,q_2,r\}} \geqslant 1$ and write \[\mathbb{P}\left( \|z\|_{L^{\infty}((0,T),B_{q_2,r}^0)} > \lambda \right) \leqslant 1 \leqslant Ce^{-\max\{q_1,q_2,r\}} \leqslant Ce^{-p}\,,\] which ends the proof of \eqref{1probaStrichartzBesov}.

The proof now reduces to the one of \eqref{1eqNormes}. Since $z(t)=\cos(t|\nabla|)u_0+\frac{\sin(t|\nabla|)}{|\nabla|}u_1$ we assume that $z(t)=\frac{\sin(t|\nabla|)}{|\nabla|}u_1$ and only estimate this term (the other is even simpler to handle and we omit the details). Set $p \geqslant \max\{q_1,q_2,r\}$, use the integral Minkowski inequality and Theorem~\ref{1thKolmogorov}:

\begin{align*}
\lefteqn{A:=\left\| \frac{\sin(t|\nabla|)}{|\nabla|}u_1 \right\|_{L^{p}(\Omega, L^{q_1}((0,T),B^0_{q_2,r}))} \leqslant  \left\| \left\|  \Delta _j \frac{\sin(t|\nabla|)}{|\nabla|}u_1 \right\|_{L^{p}(\Omega)} \right\|_{L^{q_1}((0,T),\ell_j ^r(\mathbb{N},L^{q_2}))} } \\
& & \lesssim \sqrt{p} \left \| \left\|\psi (D-n) \frac{\sin(t|\nabla|)}{|\nabla|} \Delta _j u_1\right\|_{\ell ^2_n(\mathbb{Z})}\right\|_{L^{q_1}((0,T),\ell_j ^r(\mathbb{N},L^{q_2}))} \,. \notag
\end{align*}
As $q_2 \geqslant 2$, the Minkowski inequality followed by the Bernstein inequality on $\psi (D-n)$ (see~\ref{1coroBernstein}) imply
\begin{align*}
    \lefteqn{A \lesssim \sqrt{p} \left \| \left\|\psi (D-n) \frac{\sin(t|\nabla|)}{|\nabla|} \Delta _j u_1\right\|_{L^{q_2}}\right\|_{L^{q_1}((0,T),\ell_j ^r(\mathbb{N},\ell_n^2(\mathbb{Z})))}} &\\
    & \lesssim \sqrt{p} \left \| \left\|\psi (D-n) \frac{\sin(t|\nabla|)}{|\nabla|} \Delta _j u_1\right\|_{L^2}\right\|_{L^{q_1}((0,T),\ell_j ^r(\mathbb{N},\ell_n^2(\mathbb{Z})))} \\
    & \lesssim \sqrt{p}\left\| \left(t^2\left\|\frac{\psi(D)\sin(t |\nabla|)}{t|\nabla|} \Delta _ju_1\right\|_{L^2}^2 +\sum_{|n|\geqslant 1} \left\| \frac{\psi(D-n)\sin(t |\nabla|)}{|\nabla|} \Delta _j u_1\right\|_{L^2}^2\right)^{1/2} \right\|_{L^{q_1}((0,T),\ell_j^r(\mathbb{N}))}\,.
\end{align*}

The use of the elementary inequalities $\left \vert \frac{\sin x}{x} \right \vert \leqslant 1$ and $|\sin (x)| \leqslant 1$ for all $x \in \mathbb{R}$ along with the Bernstein inequality give: 
\begin{align*}
A & \lesssim \sqrt{p}\left\| \left( t^2\left\|\psi(D)\Delta _ju_1\right\|_{L^2}^2 + \sum_{|n|\geqslant 1} \left\| \frac{\psi(D-n)}{|\nabla|} \Delta _j u_1\right\|_{L^2}^2 \right)^{1/2} \right\|_{L^{q_1}((0,T),\ell_j^r(\mathbb{N}))} \\ 
& \lesssim \sqrt{p} \max\left\{T^{\frac{1}{q_1}},T^{2+\frac{1}{q_1}}\right\} \left \| \left( \sum_{n \in \mathbb{Z}} \left\| \psi(D-n) \Delta _j (\langle \nabla \rangle ^{-1}u_1)\right\|_{L^2}^2 \right)^{1/2}\right \|_{\ell ^r_j(\mathbb{N})}\,, 
\end{align*}
and using \eqref{1summationCondition} as well as the definition of Besov spaces this implies \[\left\| \frac{\sin(t|\nabla|)}{|\nabla|}u_1 \right\|_{L^{p}(\Omega, L^{q_1}((0,T),B^0_{q_2,r}))} \lesssim \sqrt{p}\max\left\{T^{\frac{1}{q_1}},T^{2+\frac{1}{q_1}}\right\} \|u_1\|_{B^{-1}_{q_2,r}}\,.\]

When $r \geqslant 2$ the conclusion follows from the fact that $H^{-1} \simeq B^{-1}_{2,2} \hookrightarrow B^{-1}_{2,r}$, see Theorem~\ref{1sobolev}. For $r<2$ it follows from the fact that for all $\varepsilon >0$ arbitrarly small, $H^{-1+\varepsilon} \hookrightarrow B^{-1+\varepsilon}_{2,2} \hookrightarrow B^{-1}_{2,r}$ which explains the loss of derivatives. See also Theorem~\ref{1sobolev}. 

\textbf{\textit{Step 2.}} The case where $s>0$, $q_1< \infty$ and $q_2< \infty$ is inferred by the case $s=0$ using that $\langle \nabla\rangle^s $ commutes with semi-groups $S(t),\tilde{S}(t)$.

\textbf{\textit{Step 3.}} The case where $s>0$, $q_1 < \infty$ and $q_2=\infty$ follows from Sobolev-Besov continuous embeddings given in Theorem~\ref{1sobolev} in the usual manner: for $q_2>\frac{3}{\varepsilon}$, $B^{\varepsilon}_{q_2,r}\hookrightarrow B^{\varepsilon}_{\infty,r}$ so that the conclusion follows with an $\varepsilon$ loss of derivatives. 

\textbf{\textit{Step 4.}} Assume that $(s,q_1)=(0,\infty)$ and $r \in[2,\infty )$ which is the last case we need to address. Other cases will follow from the use of Step 2 and Step 3. 

Let $q$ large enough such that $\varepsilon q > 1$, for example $q=\frac{2}{\varepsilon}$, which ensures that the embedding $W^{\varepsilon, q}(\mathbb{R}) \hookrightarrow L^{\infty}(\mathbb{R})$ is continuous. Then \[\|z\|_{L^{\infty}((0,T),B^0_{q_2,r})} \lesssim_{\varepsilon} \langle T \rangle^{\varepsilon}\left\|\langle t \rangle ^{-\varepsilon}z \right\|_{W^{\varepsilon,q}(\mathbb{R},B^0_{q_2,r})}\,.\] 

Note that similarly as in Step 1-3 (see \cite{burqTzvetkov2}, Proposition~A.5 for details) that with $\delta :=\frac{2}{q_1}$ one has \begin{equation}
    \label{1probaStrichartzBesov3}
    \mathbb{P}\left( \left\|\langle t \rangle^{-\delta} z \right\|_{L^{q_1}(\mathbb{R},B_{q_2,r}^s)} > \lambda \right) \lesssim_{\varepsilon} \exp \left( - c \frac{\lambda ^2}{\|(u_0,u_1)\|_{\mathcal{H}^{s+\varepsilon}}^2}\right)
\end{equation}
with $\varepsilon = 0$ if $q_2 < \infty$ and $r \geqslant 2$; $\varepsilon >0$ otherwise.

Now, using the representation of $z$ in terms of exponentials rather than trigonometric we can assume without loss of generality that $z(t)=e^{it|\nabla|}\phi$ where $\phi \in B^{\varepsilon}_{q_2,r}$. For clarity reasons set $\chi (t) := \langle t \rangle^{-\varepsilon}$ and observe that in order to conclude we only need to prove
\begin{equation}
    \label{1egalite}
    \|\chi z\|_{W^{\varepsilon,q}(\mathbb{R},B^0_{q_2,r})} \lesssim_{\varepsilon} \|\chi z\|_{L^{q}(\mathbb{R},B^{\varepsilon}_{q_2,r})}\,,
\end{equation}
since conditionally to~\eqref{1egalite}, the estimate \eqref{1probaStrichartzBesov3} applies to the latter norm. 

In order to do so, remark that $\|\chi z\|_{W^{\varepsilon,q}(\mathbb{R},B^0_{q_2,r})} \lesssim \| \langle D _t \rangle ^{\varepsilon}( \chi z)\|_{L^{q}(\mathbb{R},B^0_{q_2,r})}$. Next we write \begin{align*}
    \langle D_t \rangle ^{\varepsilon} \chi &= \chi \langle D_t \rangle ^{\varepsilon} + \left[\langle D_t \rangle ^{\varepsilon}, \chi\right] \\
    & = \left( 1 + \left[\langle D_t \rangle ^{\varepsilon}, \chi\right]\langle D_t \rangle ^{-\varepsilon}\chi ^{-1} \right) \chi \langle D_t \rangle ^{\varepsilon}  \\
    & := A \chi \langle D_t \rangle ^{\varepsilon}.
\end{align*}
Now remark that $\langle D_t \rangle ^{\varepsilon} \chi$ is a pseudo-differential operator with symbol in $S^{\varepsilon}$ and $\chi, \chi ^{-1}$ are pseudo-differential operators of order zero. The standard pseudo-differential calculus now shows that $\left[\langle D_t \rangle ^{\varepsilon}, \chi\right]$ is of order $\varepsilon -1<0$ and thus $A$ is of order zero. Such operators are known to be continuous on $L^q$, see \cite{hormander} for instance. This yields 
\begin{equation}
    \label{1almostFinished}
    \|\chi z\|_{W^{\varepsilon,q}(\mathbb{R},B^0_{q_2,r})} \lesssim_{\varepsilon} \|\chi  \langle D _t\rangle ^{\varepsilon }z\|_{L^{q}(\mathbb{R},B^{0}_{q_2,r})}\,.
\end{equation}
To finish the proof of~\eqref{1egalite} remark that \[\langle D_t \rangle^{\varepsilon} z = \langle \nabla \rangle^{\varepsilon} z \text{ in } \mathcal{S}'(\mathbb{R} \times \mathbb{R}^3)\,\] thus when plugged in \eqref{1almostFinished} this implies~\eqref{1egalite} and concludes the proof. 
\end{proof}

\section{Proof of Theorem~\ref{1mainTheorem}}\label{1sectionProof}

We provide the proof of Theorem~\ref{1mainTheorem} in the case of $U=\mathbb{R}^3$. The case of the torus $\mathbb{T}^3$ is very similar as the Littlewood-Paley analysis works the same. For other adaptations to the case of $\mathbb{T}^3$ see proof of the probabilistic well-posedness in the subcritical regime $3<p<5$ in~\cite{sunXia} and the proof in the critical regime $p=5$ in~\cite{ohPocovnicu2}.

\subsection{Global strong solutions for the regularized system}\label{1sousSectionRegularized} 
In order to derive \textit{a priori} energy estimates for \eqref{1SLW} we first construct global strong solutions for approximate equations. In order to do so we use a smooth truncation in frequencies, which will prove helpful in the following. 

Set $f(x)=|x|^{p-1}x$ and consider the regularized equation for $n \geqslant 1$:
\begin{equation}
\tag{rSLW$_p^n$}
\label{1rSLW}
    \left \{
    \begin{array}{cc}
         \partial_t^2 u_n - \mathbf{P}_n \Delta u_n + \mathbf{P}_nf(u_n)=0, \\
         (u_n(0), \partial _t u_n (0))=(\mathbf{P}_n u_0, \mathbf{P}_n u_1) \in \mathcal{H}^1(\mathbb{R}^3)\,.
    \end{array}
    \right.
\end{equation} 
We prove existence of a unique global solution $(u_n,\partial_t u_n)$ in the space 
\[X_n:=L_n^2(\mathbb{R}^3) \times L_n^2(\mathbb{R}^3) \text{ where } L_n^2(\mathbb{R}^3) := \{f \in L^2(\mathbb{R}^3), \; \mathbf{P}_nf=f\}.\]
Endowed with the norm $\|(u,v)\|_{X_n}:=\|u\|_{L^2} + \|v\|_{L^2}$, $X_n$ is a Banach space.

\begin{proposition}[Study of \eqref{1rSLW}] 
\begin{sloppypar} There exists unique global strong solutions $(u_n)_{n \geqslant 1}$to the equations \eqref{1rSLW} that belong to the spaces $X_n$. Moreover $u_n \in H^1 \cap L^{p+1}$ and for every $n \geqslant 1$, every $t \in \mathbb{R}$:
\end{sloppypar} 
\begin{align}
\label{1smoothEnergy}
     E_{\operatorname{reg}}(u_{n}(t),\partial _t u_n(t)) &:= \int_{\Omega} \left(  \frac{|\partial _t u_{n}(t)|^2}{2} + \frac{|\nabla u_{n}(t)|^2}{2} + \frac{|u_n(t)|^{p+1}}{p+1}\right) \, \mathrm{d}x \\
     &=E_{\operatorname{reg}}(\mathbf{P}_nu_0,\mathbf{P}_nu_1). \notag
\end{align}
\end{proposition}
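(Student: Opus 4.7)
The plan is to recast \eqref{1rSLW} as an ordinary differential equation on the Banach space $X_n$ and to promote the resulting local theory to a global one via the conserved nonlinear energy $E_{\operatorname{reg}}$.

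First I would set $U_n := (u_n, v_n)$ with $v_n := \partial _t u_n$ and rewrite the equation as the first order system $\partial _t U_n = F_n(U_n)$, where
\[ F_n(u,v) := (v,\, \Delta u - \mathbf{P}_n f(u)) \,,\]
having used that $\mathbf{P}_n \Delta u = \Delta u$ whenever $u \in L_n^2(\mathbb{R}^3)$, since $\Delta$ commutes with $\mathbf{P}_n$. The linear part $(u,v)\mapsto (v,\Delta u)$ is bounded on $X_n$ with norm $\lesssim 2^{2n}$ by the Bernstein inequality. For the nonlinear part, Bernstein also yields $\|u\|_{L^{\infty}} \lesssim 2^{3n/2}\|u\|_{L^2}$ for every $u \in L_n^2$, which in turn gives
\[ \|\mathbf{P}_n f(u)-\mathbf{P}_n f(u')\|_{L^2} \lesssim 2^{\frac{3n(p-1)}{2}} \bigl(\|u\|_{L^2}^{p-1}+\|u'\|_{L^2}^{p-1}\bigr) \|u-u'\|_{L^2} \,,\]
by combining the boundedness of $\mathbf{P}_n$ on $L^2$ with the pointwise estimate $|f(a)-f(b)|\lesssim (|a|^{p-1}+|b|^{p-1})|a-b|$. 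Hence $F_n: X_n \to X_n$ is locally Lipschitz, and the Cauchy-Lipschitz theorem produces a unique maximal solution $U_n \in \mathcal{C}^1(I_n, X_n)$ on a maximal time interval $I_n \ni 0$, together with the blow-up alternative that $\|U_n(t)\|_{X_n}$ explodes at the boundary of $I_n$ whenever $I_n \neq \mathbb{R}$.

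The second step is to derive the energy identity \eqref{1smoothEnergy} by pairing \eqref{1rSLW} with $\partial _t u_n$ in $L^2(\mathbb{R}^3)$. Because $L^2_n$ is preserved by the flow, both $u_n(t)$ and $\partial _t u_n(t)$ are fixed by the self-adjoint projector $\mathbf{P}_n$, so each $\mathbf{P}_n$ in \eqref{1rSLW} can be moved onto $\partial _t u_n$ and then removed. Standard integrations by parts together with the identity $\partial _t u_n \cdot f(u_n) = \tfrac{1}{p+1}\partial _t (|u_n|^{p+1})$ then give $\tfrac{d}{dt}E_{\operatorname{reg}}(u_n(t),\partial _t u_n(t))=0$, which is precisely \eqref{1smoothEnergy}.

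The conservation of $E_{\operatorname{reg}}$ immediately furnishes uniform in time bounds on $\|\partial _t u_n(t)\|_{L^2}$, $\|\nabla u_n(t)\|_{L^2}$ and $\|u_n(t)\|_{L^{p+1}}$, so that $u_n(t)\in H^1\cap L^{p+1}$ follows once I recover the $L^2$ norm of $u_n$ by integration in time, $\|u_n(t)\|_{L^2}\leqslant \|\mathbf{P}_n u_0\|_{L^2}+|t|\sup_s\|\partial _s u_n(s)\|_{L^2}$. These bounds prevent any $X_n$-blowup, so the blow-up alternative forces $I_n=\mathbb{R}$ and the solution is global. The only subtle point in the whole argument is the local Lipschitz estimate for $u\mapsto \mathbf{P}_nf(u)$ on $L_n^2$, and this is handled cleanly by Bernstein once the spectral localization $\mathbf{P}_nu=u$ is exploited; I expect no further obstacle.
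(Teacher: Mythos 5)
Your proposal is correct and follows essentially the same approach as the paper: recast \eqref{1rSLW} as a first-order ODE on $X_n$, establish a local Lipschitz bound for the vector field via the Bernstein inequality (exploiting the frequency localization to control $\|u\|_{L^\infty}$), invoke Picard--Lindelöf, derive the energy identity by pairing with $\partial_t u_n$ using the self-adjointness of $\mathbf{P}_n$, and then use the resulting uniform $X_n$ bound together with the blow-up alternative to conclude globality. The only cosmetic difference is that the paper prefers to bound $\|\tfrac{\mathrm{d}}{\mathrm{d}t}U_n(t)\|_{X_n}$ directly to build a continuation at $T_n$, whereas you appeal to the standard ODE blow-up alternative from boundedness of $\|U_n(t)\|_{X_n}$; both routes are valid and equivalent.
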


\begin{proof} The proof is standard as local existence and uniqueness is achieved via the Picard-Lindelöf theorem, and the global existence will result from energy conservation. The remaining of this proof provides details of this classical scheme. Before starting the proof, remark that by the time reversibility of \eqref{1rSLW} it is sufficient to show existence and uniqueness of global solutions on the time interval $\mathbb{R}_+$. 

We start by proving that the equation \eqref{1rSLW} is locally well-posed in $\mathcal{C}^1(\mathbb{R}_+,X_n)$. It is a consequence of the Picard-Lindelöf theorem once we have written \eqref{1rSLW} in the form \[\frac{\mathrm{d}}{\mathrm{d}t} U_n (t) = F_n(U_n(t)), \text{ with } U_n(t):=(u_n(t),\partial _t u_n(t)) \text{ and } F_n(u,v):=(v,\mathbf{P}_n \Delta u - \mathbf{P}_nf(u)).\]

In order to be applied, the Picard-Lindelöf theorem requires the map $F_n$ to be locally Lipschitz on $X_n$. As $u \mapsto \mathbf{P}_n \Delta u$ is linear and continuous from $L_n^2$ into itself, it is locally Lipschitz. Observe that for $u \in L^2_n$, the Bernstein inequality proves that $u \in L^{\infty}$ and more precisely observe that $\|u\|_{L^{\infty}} \lesssim 2^{\frac{3n}{2}}\|u\|_{L^2}$ so that $F_n$ is well-defined from $L_n^2$ into itself. Finally let $(u,v),(u',v') \in X_n$ satisfying $\|(u,v)\|_{X_n},\|(u',v')\|_{X_n} \leqslant R$ and compute:
\begin{align*}
    \|F_n(u,v)-F_n(u',v')\|_{X_n} &\leqslant \|v-v'\|_{L^2} + \|\mathbf{P}_n \Delta (u-u')\|_{L^2} + \|\mathbf{P}_n(f(u)-f(u'))\|_{L^2}\\
& \lesssim_n \|v-v'\|_{L^2} + \|u-u'\|_{L^2} + \|f(u)-f(u')\|_{L^2} \\
& \lesssim_n \|(u,v)-(u',v')\|_{X_n}.
\end{align*}
Notice that in the last inequality we used that \[\||u|^{p-1}u-|v|^{p-1}v\|_{L^2} \lesssim_n (\|u\|^{p-1}_{L^{\infty}}+\|v\|^{p-1}_{L^{\infty}})\|u-v\|_{L^2} \lesssim _n R^{p-1} \|u-v\|_{L^2}.\] The Picard-Lindelöf theorem applies and gives rise to unique solutions defined on maximal time intervals that we denote $[0,T_n)$. These solutions belong to $\mathcal{C}^1([0,T_n),X_n)$. 

In order to derive the energy estimates, if we prove that $u_{n}$ has regularity $\mathcal{C}^2$ in both space and time, then it is sufficient to multiply \eqref{1rSLW} by $\partial _t u_n(t)$, integrate by parts in space and use the fact that the operator $\mathbf{P}_n$ is symmetric in $L^2$ to obtain \eqref{1smoothEnergy}. Time regularity is granted from the regularity given by the Picard-Lindelöf theorem. For space regularity observe that since $u_n\in L_n^2$, the derivation is a continuous mapping of $L^2_n$, thus $u_n \in H^k$ for all $k \geqslant 0$. The Sobolev embedding theorem proves the required smoothness in space for $u_n$.

Let $n \geqslant 1$. We prove that $T_n= + \infty$. Remark that the energy equality \eqref{1smoothEnergy} proves that \[\sup_{t \in [0,T_n)} \left(\|\nabla u_n(t)\|_{L^2} + \|\partial _t u_n(t)\|_{L^2}\right) < \infty.\] Assume that $T_n<\infty$. Then for $t \in(0,T_n)$ \[\|u_n(t)\|_{L^2} \leqslant \|u_0\|_{L^2} + \int_0^{t}\|\partial _t u_n(s)\|_{L^2} \, \mathrm{d}s \leqslant \|u_0\|_{L^2} + T_n \sup_{t \in (0,T_n)} \|\partial _t u(t)\|_{L^2}\] which yields $\sup_{t \in [0,T_n)} \|u_n(t)\|_{L^2} < \infty$. 

Note that the Bernstein inequality implies $\|\mathbf{P}_n \Delta (u_n)\|_{L^2} \lesssim _n \|u_n\|_{L^2}$ and  $\|\mathbf{P}_nf(u_n(t))\|_{L^2} \lesssim _n \|u_n(t)\|_{L^{2p}}^p \lesssim _n \|u_n(t)\|_{L^2}^p$ so that: \[\sup_{t \in [0,T_n)} \left\| \frac{\mathrm{d}}{\mathrm{d}t} U_n(t)\right\|_{X_n} = \sup_{t \in [0,T_n)} \left\{\|\partial_t u_n(t)\|_{L^2}+\|\mathbf{P}_n \Delta u_n(t) - \mathbf{P}_n f(u_n(t))\|_{L^2} \right\}< \infty\] which allow to construct a continuation for $U_n$ at $t=T_n$ and contradicts the maximality of $T_n$. Finally $T_n= \infty$.
\end{proof}

We now write $u_n(t)=z_n(t)+v_n(t)$ where $z_n$ has been introduced in \eqref{1smoothSemiGroup}, with initial data $(z_n(0),\partial _t z_n(0))=\mathbf{P}_n(u_0,u_1)$, and $v_n$ satisfying 
\begin{equation}
\label{1regNonlinear}
    \left \{ 
    \begin{array}{c}
         \partial _t^2v_n(t) - \mathbf{P}_n\Delta v_n(t) + \mathbf{P}_n \left((z_n(t)+v_n(t))|z_n(t)+v_n(t)|^{p-1}\right)=0, \\
         (v_n(0),\partial _t v_n(0))=(0,0). 
    \end{array}
    \right.
\end{equation}

\subsection{A priori estimates for the regularized system}\label{1sousSectionApriori} In order to pass to the limit $n \to \infty$, one needs uniform estimates for \eqref{1rSLW}. As we expect the linear part to be handled in a simple way we may focus on the nonlinear part $v_n$, satisfying \eqref{1regNonlinear}, and introduce its nonlinear energy
\[E_n(t) := \int_{\mathbb{R}^3} \left( \frac{|\partial _t v_{n}(t)|^2}{2} + \frac{|\nabla v_{n}(t)|^2}{2} + \frac{|v_n(t)|^{p+1}}{p+1}\right) \, \mathrm{d}x\,.\]
Let $s_p:=\frac{p-3}{p-1}$. In this subsection we prove the following uniform bound. 
\begin{proposition}[Probabilistic a priori estimates]\label{1propProbabilistic} Let $s>s_p$, and $p>5$. Let $T>0$ and $\eta \in (0,1)$. There exists a measurable set $\Omega_{T,\eta} \subset \Omega$ and a constant $C(T, \eta, \|(u_0,u_1)\|_{\mathcal{H}^s})$ which depends only on $T,\eta, \|(u_0,u_1)\|_{\mathcal{H}^s}$, such that:
\begin{enumerate}[label=(\textit{\roman*})]
    \item $\mathbb{P}(\Omega_{T,\eta}) \geqslant 1- \eta$.
    \item For every $\omega \in \Omega_{T,\eta}$, if the initial data for $u_n$ is attached to $\omega$ \textit{via} the randomization map of $(u_0,u_1)$ then: \[\sup_{n \geqslant 0} \sup_{t \in(-T,T)} E_n(t) \leqslant C(T, \eta, \|(u_0,u_1)\|_{\mathcal{H}^s}).\]
\end{enumerate}
\end{proposition}

The cornerstone of the proof of Proposition~\ref{1propProbabilistic}, which will allow to close the energy estimates in the Grönwall argument is the following. We introduce $\alpha_p := \lceil \frac{p-3}{2} \rceil$ and recall that $f(x)=|x|^{p-1}x$.

\begin{lemma} \label{1lemmeTechnique} For every $1 \leqslant k \leqslant \alpha_p$ one has: 
\begin{equation}\label{1estimeeTechnique}
\left \vert \int_{\mathbb{R}^3} f^{(k-1)}(v_n(t)) z_n(t)^{k-1} \langle \nabla \rangle \tilde{z}_n(t) \, \mathrm{d}x \right \vert \lesssim g\left(\|z_n\|_{L^{\infty}((-T,T),X)},\|\tilde z_n\|_{L^{\infty}((-T,T),Y)}\right)\left(1+E_n(t)\right).
\end{equation}
where $g$ is a polynomial with positive coefficients, 
\[X:=L^{\infty} \cap L^{\frac{p+1}{2}} \cap B^{1-s_p}_{q_2,1} \cap B^{1-s_p}_{q_{\alpha_p},1} \, \text{ and }\, Y:=L^{\infty} \cap B^{s_p}_{\infty,1}\] where for $2 \leqslant k \leqslant \alpha_p$, $q_k$ being defined by $\frac{1}{q_k}+\frac{p-k+1}{p+1}=1$.
\end{lemma}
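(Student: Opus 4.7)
The overall plan is to split the derivative as $\langle \nabla \rangle = \langle \nabla \rangle^{1-s_p} \langle \nabla \rangle^{s_p}$ and use the self-adjointness of both fractional operators to transfer $\langle \nabla \rangle^{1-s_p}$ onto the product $f^{(k-1)}(v_n)\,z_n^{k-1}$, keeping $\langle \nabla \rangle^{s_p}$ on $\tilde z_n$. A H\"older inequality in $L^1\times L^\infty$, combined with the embedding $B^{s_p}_{\infty,1} \hookrightarrow W^{s_p,\infty}$ (see Appendix~\ref{1appendixA}), reduces the task to proving
\[
\left\| \langle \nabla \rangle^{1-s_p}\!\left(f^{(k-1)}(v_n)\, z_n^{k-1}\right) \right\|_{L^1} \lesssim g(\|z_n\|_X)\,\bigl(1+E_n(t)\bigr)\,,
\]
since the remaining factor $\|\langle \nabla \rangle^{s_p}\tilde z_n\|_{L^\infty}$ is controlled by $\|\tilde z_n\|_{B^{s_p}_{\infty,1}} \leq \|\tilde z_n\|_Y$.

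To control this $L^1$ norm I would apply a fractional Leibniz (Kato--Ponce) estimate, separating the $v_n$ and $z_n$ contributions into
\[
(\mathrm{I}) = \left\|\langle \nabla \rangle^{1-s_p} f^{(k-1)}(v_n)\right\|_{L^{(p+1)/(p-k+2)}} \left\|z_n^{k-1}\right\|_{L^{(p+1)/(k-1)}}\,,
\]
\[
(\mathrm{II}) = \left\|f^{(k-1)}(v_n)\right\|_{L^{(p+1)/(p-k+1)}} \left\|\langle \nabla \rangle^{1-s_p} z_n^{k-1}\right\|_{L^{q_k}}\,,
\]
(for $k=1$ only $(\mathrm{I})$ appears, since then $z_n^{k-1}=1$). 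The H\"older exponents are precisely those that make the $v_n$-factors combine into $E_n$. A Moser-type chain rule bounds $\|\langle \nabla \rangle^{1-s_p}f^{(k-1)}(v_n)\|_{L^r}$ by $\|v_n\|_{L^{p+1}}^{p-k}\, \|\langle \nabla \rangle^{1-s_p} v_n\|_{L^{(p+1)/2}}$, after which the Gagliardo--Nirenberg inequality (Theorem~\ref{1gagliardoNirenberg}) applied with $\alpha=s_p$ gives $\|\langle \nabla \rangle^{1-s_p} v_n\|_{L^{(p+1)/2}} \lesssim E_n(t)^{2/(p+1)}$, exactly as in the heuristic argument sketched earlier. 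The resulting powers of $E_n$ are $(p-k+2)/(p+1)$ in $(\mathrm{I})$ and $(p-k+1)/(p+1)$ in $(\mathrm{II})$, both bounded by $1+E_n$ via Young's inequality.

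It remains to control the $z_n$-factors by the $X$-norm. For $(\mathrm{I})$, one interpolates $\|z_n\|_{L^{p+1}}$ between $\|z_n\|_{L^\infty}$ and $\|z_n\|_{L^{(p+1)/2}}$, both of which are in $X$. For $(\mathrm{II})$, a further Moser estimate gives $\|\langle \nabla \rangle^{1-s_p} z_n^{k-1}\|_{L^{q_k}} \lesssim \|z_n\|_{L^\infty}^{k-2}\,\|z_n\|_{B^{1-s_p}_{q_k,1}}$, and the Besov interpolation $B^{1-s_p}_{q_2,1}\cap B^{1-s_p}_{q_{\alpha_p},1} \hookrightarrow B^{1-s_p}_{q_k,1}$ applies because $q_k=(p+1)/k$ is decreasing in $k$ and thus lies between $q_{\alpha_p}$ and $q_2$ for $2\leq k \leq \alpha_p$. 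Assembling everything yields the required polynomial dependence on $\|z_n\|_X$ and $\|\tilde z_n\|_Y$.

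The main obstacle I expect is the careful justification of the fractional Leibniz and Moser-type estimates, both because of the $L^1$ endpoint (where the standard Kato--Ponce inequality is delicate) and because $f^{(k-1)}$ is not polynomial when $p\notin\mathbb{N}$. I would address this by performing product and chain rules in Besov spaces (exploiting $B^0_{1,1}\hookrightarrow L^1$ and a paraproduct decomposition) and relying on the fact that $k-1 \leq \alpha_p - 1 < (p-1)/2$ provides exactly the smoothness of $f^{(k-1)}$ needed for the chain rule to apply.
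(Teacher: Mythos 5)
Your plan matches the paper's proof essentially step for step: after moving $\langle\nabla\rangle^{1-s_p}$ onto $f^{(k-1)}(v_n)z_n^{k-1}$ and keeping $\langle\nabla\rangle^{s_p}$ on $\tilde z_n$, the paper performs the pairing frequency block by frequency block and lands in $\dot B^{1-s_p}_{1,\infty}\times B^{s_p}_{\infty,1}$ rather than $W^{1-s_p,1}\times W^{s_p,\infty}$ (thereby sidestepping the $L^1$ Kato--Ponce obstruction you flag), then applies the Besov tame product estimate, the chain rule of Lemma~\ref{1chainRule}, and Gagliardo--Nirenberg with exponent $\alpha=s_p$, exactly as you outline. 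The Besov-space repair you anticipate in your final remark is precisely the paper's argument, so the proposal is correct modulo replacing your Sobolev $L^1$ formulation by its Besov analogue.
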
 

\begin{remark} Observe that for $p>5$ we have $1-s_p<s_p$. 
\end{remark}

For exposition reasons we postpone its proof, and prove Proposition~\ref{1propProbabilistic} assuming Lemma~\ref{1lemmeTechnique}. 

\begin{proof}[Proof of Proposition~\ref{1propProbabilistic}] Once again, by time reversibility, we will prove an \textit{a priori} estimate on $[0,T)$ rather than $(-T,T)$. We will first find a large measure set allowing to prove the desired estimates. Note that the forthcoming constraints in the definition of $\Omega_{T,\eta}$ are designed to control all the terms requiring bounds for the linear parts $z_n$ or $\tilde{z}_n$ that will be proven in the following. Let $\lambda >0$, which will be chosen later. Set 
\begin{align}
\label{1eqProbaSet}
    \Omega_{T,\eta} := \left\{ \right.&\|z_n\|_{L^{\infty}_TL^{p+1}} + \|z_n\|_{L^{2p}_T L^{2p}} + \|z_n\|_{L^{\infty}_T L^{r_p(\alpha_p+1)}}^{\alpha_p+1} \\ &\left.+g\left(\|z_n\|_{L^{\infty}((0,T),X)},\|\tilde z_n\|_{L^{\infty}((0,T),Y)}\right) \leqslant \lambda, \text{for all } n\geqslant 1 \right\},\notag
\end{align}
where $r_p$ is defined by $\frac{1}{2} + \frac{p-\alpha_p-1}{p+1} + \frac{1}{r_p}=1$.

For $\lambda$ large enough, depending only on the initial data, $T$ and~$\eta$, we can assume $\mathbb{P}(\Omega_{T,\eta}) \geqslant 1- \eta$ thanks to Proposition~\ref{1propStrichartz} and Proposition~\ref{1propStrichartzBesov}, as soon as $s>s_p$. The fact that $\lambda$ does not depend on $n$ comes from the inequality $\|\mathbf{P}_n(u_0,u_1)\|_{\mathcal{H}^s} \leqslant \|(u_0,u_1)\|_{\mathcal{H}^s}$.

Let $\omega \in \Omega_{T,\eta}$. From now on the following estimates will be deterministic as we have fixed the initial data (attached to $\omega$ \textit{via} the randomization map). We will carry out the computations for $s=s_p$. 

Recall that $\mathbf{P}_n$ is a symmetric operator in $L^2(\mathbb{R}^3)$ and that $v_n$ is smooth. This allows one to compute $\frac{\mathrm{d}}{\mathrm{d}t} E_n(t)$ and obtain: 
\begin{align}
\label{1eqDerivee}
  \frac{\mathrm{d}}{\mathrm{d}t} E_n(t) &= \int_{\mathbb{R}^3} \partial_t v_{n}(t) \left((\partial_t^2 v_{n}(t)- \mathbb{P}\Delta v_n(t) + \mathbf{P}_n(v_n(t)|v_n(t)|^{p-1})\right) \, \mathrm{d}x\\
  &= - \int_{\mathbb{R}^3} \partial_t v_n(t) \left( f(z_n(t)+v_n(t))-f(v_n(t)) \right) \, \mathrm{d}x. \notag
\end{align}
Next we expand the nonlinearity $f$ at the point $v(t,x)$ using the Taylor formula with integral remainder up to the order $\alpha_p=\lceil \frac{p-3}{2} \rceil$. For convenience we drop the $t,x$ references and recall that for $k \geqslant 0$,
\[
f^{(k)}(x) = \left \{ \begin{array}{ccc}
    C_{p,k}x|x|^{p-k-1} & \text{for}& k \text{ even,}\\
    C_{p,k}|x|^{p-k} & \text{for}& k \text{ odd,}
\end{array}\right.
\]
thus \[f(v_n+z_n)-f(v_n)=\sum_{k=1}^{\alpha_p} f^{(k)}(v_n)z_n^k + \underbrace{\int_{v_n}^{v_n+z_n} \frac{f^{(\alpha _p+1)}(s)}{\alpha _p!} (v_n+z_n-s)^{\alpha _p} \, \mathrm{d}s}_{R(z_n,v_n)}\,.\] 
One can integrate \eqref{1eqDerivee} and use $E(0)=0$ and $(v_n(0),\partial_t v_n(0))=(0,0)$ so that we can write: 
\begin{equation}\label{1eqIntegree}
   E_n(t) = \sum_{k=1}^{\alpha_p} C_{k,p}I_n^{(k)} + C_pR_n \,, 
\end{equation}
where 
\begin{gather*}
    I_n^{(k)}:= -\int_0^t \int_{\mathbb{R}^3} \partial _t v_n  f^{(k)}(v_n)z_n^k \, \mathrm{d}x \, \mathrm{d}t' \text{  for  } 1 \leqslant k \leqslant \alpha_p,\\
    R_n:= -\int_0^t \int_{\mathbb{R}^3}\partial_t v_n R(z_n,v_n) \, \mathrm{d}x \, \mathrm{d}t'\,.
\end{gather*}
We first estimate $R_n$ as we expect it to be simpler to handle. Remark that for $\theta_n \in [v_n,v_n+z_n]$ one has \[|f^{(\alpha _p+1)}(\theta_n)| \lesssim |v_n+z_n|^{p- \alpha_p -1} + |v_n|^{p- \alpha _p -1} \lesssim |v_n|^{p- \alpha_p -1} + |z_n|^{p- \alpha _p -1} \,,\] so that \[R(v_n,z_n) \lesssim |z_n|^p + |v_n|^{p-\alpha_p -1}|z_n|^{\alpha _p +1}\,.\]
The Hölder inequality and the Young inequality give: 
\begin{align*}
    R_n & \lesssim \int_0^t \|\partial _t v_n(t')\|_{L^2} \|v_n(t')\|^{p-\alpha_p-1}_{L^{p+1}} \|z_n(t')\|^{\alpha _p+1}_{L^{r_p(\alpha _p +1)}}\, \mathrm{d}t' + \int_0^t \|\partial _t v_n(t')\|_{L^2} \|z_n(t')\|_{L^{2p}}^{p}\, \mathrm{d}t' \\
    & \lesssim \int_0^t \|\partial _t v_n(t')\|_{L^2} \|v_n(t')\|^{p-\alpha_p-1}_{L^{p+1}} \|z_n(t')\|^{\alpha_p+1}_{L^{r_p(\alpha _p +1)}}\, \mathrm{d}t' + \int_0^t \|\partial _t v_n(t')\|^2_{L^2} \, \mathrm{d}t' + \|z_n\|_{L^{2p}_TL^{2p}}^{2p} \\ 
    & \lesssim  \|z_n\|_{L^{2p}_TL^{2p}}^{2p} + \left(1+ \|z_n\|_{L^{\infty}_T L^{r_p(\alpha_p +1)}}^{\alpha_p +1}\right) \int_0^t \max \left\{E_n(t'),E_n(t')^{\frac{1}{2}+\frac{p-\alpha_p-1}{p+1}}\right\}\, \mathrm{d}t'\,.
\end{align*}
Observe that $\frac{p-\alpha_p -1}{p+1} \leqslant \frac{1}{2}$ since $\alpha _p = \lceil \frac{p-3}{2} \rceil \geqslant \frac{p-3}{2}$ so that 
\[R_n \lesssim \|z_n\|_{L^{2p}_TL^{2p}}^{2p}  + \left(1+ \|z_n\|_{L^{\infty}_T L^{r_p(\alpha_p+1)}}^{\alpha_p +1}\right) \int_0^t (1+E_n(t')) \, \mathrm{d}t'\,.\]

Bounds for the terms $I_n^{(k)}$ require a more intricate analysis and will follow Lemma~\ref{1lemmeTechnique}. More precisely, let $1 \leqslant k \leqslant \alpha_p$. First apply Fubini's theorem and write: \[I_n^{(k)}=- \int_{\mathbb{R}^3} \int_0^t \partial _t (f^{(k-1)}(v_n)) z_n^k \, \mathrm{d}t' \, \mathrm{d}x.\] 

Integrate by parts in time so that
\[I_n^{(k)} = -\int_{\mathbb{R}^3} f^{(k-1)}(v_n(t))z_n^k(t)\, \mathrm{d}x + k\int_{\mathbb{R}^3} \int_0^t f^{(k-1)}(v_n(t')) \partial_t z_n(t') z_n(t')^{k-1} \, \mathrm{d}t' \, \mathrm{d}x.\] 
Observe that $\partial _t z_n=\langle \nabla \rangle \tilde{z}_n$ and bound
\begin{align*}
|I_n^{(k)}| &\lesssim \int_{\mathbb{R}^3} |z_n(t)|^{k}|v_n(t)|^{p-k+1} \, \mathrm{d}x   +  \left \vert \int_0^t \int_{\mathbb{R}^3} f^{(k-1)}(v_n(t')) \langle \nabla \rangle \tilde{z}_n(t') z_n(t')^{k-1}\, \mathrm{d}t' \, \mathrm{d}x \right \vert \\
&= J_n^{(k)}+K_n^{(k)}.
\end{align*}
In order to handle $J_n^{(k)}$, we use Hölder and Young's inequality:
\[J_n^{(k)}(t) \leqslant  E_n(t)^{\frac{p-k+1}{p+1}}\|z_n\|^k_{L^{p+1}} \leqslant \frac{1}{2}E_n(t) + C\|z_n\|_{L^{\infty}_TL^{p+1}}^{p+1}.\]
$K_n^{(k)}$ is more difficult to study and is estimated \textit{via} Lemma~\ref{1lemmeTechnique}, so that \[K_n^{(k)} \lesssim g\left(\|z_n\|_{L^{\infty}((-T,T),X)},\|\tilde z_n\|_{L^{\infty}((-T,T),Y)}\right) \left( 1+ \int_0^t E_n(t')\,\mathrm{d}t'\right).\]
Finally using the bounds from \eqref{1eqProbaSet} we have \begin{align*}
E_n(t) &\lesssim  \left(1+ \|z_n\|_{L^{\infty}_T L^{r_p(\alpha_p+1)}}^{\alpha_p +1} + g\left(\|z_n\|_{L^{\infty}((0,T),X)},\|\tilde z_n\|_{L^{\infty}((0,T),Y)}\right)\right) \left(1+\int_0^t E_n(t')\,\mathrm{d}t' \right) \\
&+ \|z_n\|_{L^{2p}_TL^{2p}}^{2p} + \|z_n\|_{L^{\infty}_TL^{p+1}}^{p+1} \\
& \lesssim 1+ \int_0^t E_n(t')\, \mathrm{d}t'\,.
\end{align*}
Knowing that the implicit constant does not depend on $n$, but only on $\eta, T, p$, the Grönwall lemma ends the proof. 
\end{proof}

It remains to prove Lemma~\ref{1lemmeTechnique}. Its proof will require a chain rule estimate in Besov spaces whose proof is similar to the one of Theorem~2.61 in \cite{bahouriCheminDanchin}. 

\begin{lemma}[Chain rule estimates in Besov spaces]\label{1chainRule} Let $u \in \mathcal{S}'$, $s \in (0,1)$ and $p>3$. Let $q,r \in[1,\infty]$ and $q_1,q_2 \in [1,\infty]$ satisfying $\frac{1}{q}=\frac{1}{q_1}+\frac{1}{q_2}$. Let $f$ denote the function defined by $f(x)=x|x|^{p-1}$ or $f(x)=\operatorname{sgn}(x)x|x|^{p-1}=|x|^p$. Then the following identities hold: 
\begin{enumerate}[label=(\textit{\roman*})]
    \item $\|f(u)\|_{B^s_{q,r}} \lesssim \|u\|_{B^s_{q_1,r}} \||u|^{p-1}\|_{L^{q_2}}$, and $\|f(u)\|_{\dot B^s_{q,r}} \lesssim \|u\|_{\dot B^s_{q_1,r}} \||u|^{p-1}\|_{L^{q_2}}$.
    \item $\|f(u)\|_{B^s_{q,r}} \lesssim_{\varepsilon} \|u\|_{W^{s+\varepsilon,q_1}} \||u|^{p-1}\|_{L^{q_2}}$ for all $\varepsilon>0$ such that $s+\varepsilon <1$.
\end{enumerate}
\end{lemma}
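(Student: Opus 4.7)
My plan is to establish both inequalities via the classical finite-difference characterization of Besov spaces of order $s \in (0,1)$. Specifically, for such $s$,
\[
\|u\|_{\dot B^s_{q,r}}^r \simeq \int_{\mathbb{R}^3} \frac{\|u(\cdot + h) - u(\cdot)\|_{L^q}^r}{|h|^{3+sr}}\,\mathrm{d}h,
\]
while the inhomogeneous norm $\|u\|_{B^s_{q,r}}$ is comparable to $\|u\|_{L^q}$ plus the same integral restricted to $|h| \leqslant 1$ (see for instance Theorem~2.36 of~\cite{bahouriCheminDanchin}). The strategy is thus to transfer a finite-difference estimate for $f(u)$ back to one for $u$ via a pointwise bound on $f$.

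The pointwise ingredient is elementary: in both choices for $f$ one has $f \in \mathcal{C}^1(\mathbb{R})$ with $|f'(x)| \lesssim |x|^{p-1}$, so the mean value theorem yields
\[
|f(u(x+h)) - f(u(x))| \lesssim \bigl(|u(x+h)|^{p-1} + |u(x)|^{p-1}\bigr)\, |u(x+h) - u(x)|.
\]
Applying Hölder's inequality with exponents $q_1, q_2$ such that $\frac{1}{q_1} + \frac{1}{q_2} = \frac{1}{q}$, together with translation invariance of Lebesgue measure to identify $\||u(\cdot + h)|^{p-1}\|_{L^{q_2}}$ with $\||u|^{p-1}\|_{L^{q_2}}$, one obtains
\[
\|f(u(\cdot + h)) - f(u(\cdot))\|_{L^q} \lesssim \||u|^{p-1}\|_{L^{q_2}}\, \|u(\cdot + h) - u(\cdot)\|_{L^{q_1}}.
\]
Raising to the $r$-th power and integrating against $|h|^{-3-sr}\,\mathrm{d}h$ delivers the homogeneous half of (i) directly. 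For the inhomogeneous half, I additionally estimate
\[
\|f(u)\|_{L^q} \leqslant \|u\|_{L^{q_1}}\, \||u|^{p-1}\|_{L^{q_2}}
\]
by Hölder and invoke the continuous embedding $B^s_{q_1, r} \hookrightarrow L^{q_1}$, valid for $s > 0$ (Theorem~\ref{1sobolev}).

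For (ii), I combine (i) with Sobolev--Besov embeddings. Identifying $W^{s+\varepsilon, q_1}$ with the Triebel--Lizorkin space $F^{s+\varepsilon}_{q_1, 2}$ for $q_1 \in (1,\infty)$, one has $F^{s+\varepsilon}_{q_1, 2} \hookrightarrow B^{s+\varepsilon}_{q_1, \max(q_1, 2)}$, and the gain of $\varepsilon$ derivatives permits a further embedding into $B^s_{q_1, r}$ for every $r \in [1,\infty]$ (with $s + \varepsilon < 1$ keeping us in the range where the finite-difference characterization remains valid). Thus $W^{s+\varepsilon, q_1} \hookrightarrow B^s_{q_1, r}$, which when plugged into (i) produces (ii).

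The main delicate point is the finite-difference characterization itself in the range $s \in (0,1)$ for both homogeneous and inhomogeneous Besov spaces, with constants uniform in the parameters $q, r$; once this is at hand, the rest is a single application of the mean value theorem and Hölder's inequality, and the alternative paraproduct-based proof of Theorem~2.61 in~\cite{bahouriCheminDanchin} could be imported essentially verbatim if desired. Observe that the hypothesis $p > 3$ is not strictly needed for the sketched argument, which only uses $p > 1$ to guarantee $f \in \mathcal{C}^1$, but it matches the regime in which the lemma is applied in Section~\ref{1sousSectionApriori}.
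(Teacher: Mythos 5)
Your proof is correct, but it takes a genuinely different route from the paper's. The paper follows the approach of Theorem~2.61 in \cite{bahouriCheminDanchin}: it telescopes $f(u) = \sum_{j \geqslant 0}\bigl(f(\mathbf{P}_{j+1}u) - f(\mathbf{P}_j u)\bigr)$, factors each summand as $\Delta_j u \cdot m_j$ via the Taylor formula, estimates $\|\partial^\alpha f_j\|_{L^q}$ for $|\alpha| \leqslant 1$ using the Bernstein and Leibniz rules, and resums with the reconstruction Lemma~\ref{1lemmaReconstruction}; part (\textit{ii}) is then obtained from a Hölder inequality in the dyadic index $j$ after writing $2^{js} = 2^{j(s+\varepsilon)} 2^{-j\varepsilon}$. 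You bypass the Littlewood--Paley machinery entirely and instead rely on the finite-difference characterization of Besov norms of order $s \in (0,1)$, reducing everything to a pointwise mean-value bound on $f$ followed by a single Hölder application, and you recover (\textit{ii}) by composing (\textit{i}) with the embedding $W^{s+\varepsilon,q_1} \hookrightarrow B^{s+\varepsilon}_{q_1,\infty} \hookrightarrow B^s_{q_1,r}$. Your route is more elementary and exploits the constraint $s<1$ directly (first-order differences suffice), and your derivation of (\textit{ii}) is arguably cleaner than the paper's Hölder-in-$j$ argument; the paper's telescoping approach is the one that would scale to higher regularity. Your observation that $p>3$ is not strictly needed (only $p>1$, to ensure $f \in \mathcal{C}^1$) is also correct. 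One small caveat for the homogeneous half of (\textit{i}): the finite-difference equivalence for $\dot B^s_{q,r}$ holds for the realization modulo polynomials, so, just as the paper defers to Lemma~2.62 of \cite{bahouriCheminDanchin} for the convergence of its telescoping series, you should say a word about why $f(u)$ admits a representative in $\dot B^s_{q,r}$.
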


\begin{proof}[Proof of Lemma~\ref{1chainRule}] The proof uses Lemma~\ref{1lemmaReconstruction}. Since $\mathbf{P}_ju \underset{j \to \infty}{\longrightarrow} u$ in $L^p$ and $f(0)=0$ we have \[f(u)= \sum_{j \geqslant 0} f_j \text{ with } f_j:= f(\mathbf{P}_{j+1}u) - f(\mathbf{P}_j u).\] The Taylor formula at order $1$ writes: \[f_j=\Delta_ju \; m_j \text{ with } m_j:=\int_0^1 f'(\mathbf{P}_ju +t \Delta_j u)\,\mathrm{d}t.\] In view of Lemma~\ref{1lemmaReconstruction} we will focus on estimating $\|\partial ^{\alpha} f_j\|_{L^p}$ with $|\alpha| \leqslant \lfloor s \rfloor +1 =1$. For $|\alpha |=0$ write that $\|\partial ^{\alpha} f_j\|_{L^p} \leqslant \|m_j\|_{L^{q_2}} \|\Delta_j u\|_{L^{q_1}}$. Then we estimate $m_j$:
\[|m_j|  \leqslant p \int_0^1 |\mathbf{P}_ju+t\Delta _ju|^{p-1}\, \mathrm{d}t \lesssim |\mathbf{P}_ju|^{p-1} + |\Delta _j u|^{p-1}\]
where we used that $|f'(x)| \leqslant p|x|^{p-1}$ for $x \in \mathbb{R}$ and $(a+b)^{p-1} \lesssim a^{p-1} + b^{p-1}$ for $a,b >0$. 

Then $\|m_j\|_{L^{q_2}} \lesssim \|\mathbf{P}_ju\|_{L^{q_2(p-1)}}^{p-1}$. 
Similarly, when $|\alpha|=1$ and for multi-indicies $\beta \leqslant \alpha$, the Bernstein inequality (the function to which it is applied is indeed with frequencies supported in a ball of radius $\simeq 2^j$) and the same arguments as before yield 
\[\|\partial^{\beta} m_j\|_{L^{q_2}} \lesssim 2^{j|\beta|} \left \| \int_0^1 f'(\mathbf{P}_ju + t \Delta_j u)\, \mathrm{d}t \right \|_{L^{q_2}} \lesssim 2^{j |\beta|} \|\mathbf{P}_ju\|^{p-1}_{L^{q_2(p-1)}}\,.\] Using the Leibniz formula $\partial ^{\alpha} (fg) = \sum_{\beta \leqslant \alpha} \binom{\alpha}{\beta} \partial ^{\alpha - \beta} f \partial ^{\beta} g$ and putting all the previous estimates together we recover the estimate \[\sup_{\alpha \leqslant \lfloor s \rfloor +1} 2^{j(s-|\alpha|)}\|\partial ^{\alpha} u_j\|_{L^q} \lesssim  2^{js}\|\Delta _j u\|_{L^{q_1}} \|\mathbf{P}_ju\|^{p-1}_{L^{q_2(p-1)}}\,.\]
The proof of (\textit{i}) now follows from the direct inequality $\|\mathbf{P}_ju\|_{L^{q_2(p-1)}} \lesssim \|u\|_{L^{q_2(p-1)}}$.

For the homogeneous counterpart of (\textit{i}), replace all the appearences of $\mathbf{P}_j$ or $\Delta _j$ with $\dot{\mathbf{P}}_j$ or~$\dot\Delta _j$ and observe that all the inequalities written still hold true. The only difficulty is proving the convergence of the series $\sum_{j \leqslant 0} f_j$ with $f_j:=f(\dot{\mathbf{P}}_{j+1}u)-f(\dot{\mathbf{P}}_{j}u)$. This is explained in \cite{bahouriCheminDanchin}, Lemma 2.62. 

For (\textit{ii}) write $2^{js}=2^{j(s+\varepsilon)}2^{-js\varepsilon}$, and use the Hölder inequality to obtain: 
\begin{equation}
    \label{1eqB}
    \|f(u)\|_{B^{s}_{q,r}} \lesssim \|u\|_{B^{s+\varepsilon}_{q_1,\infty}} \sum_{j \geqslant 0} 2^{-j\varepsilon} \|\mathbf{P}_ju\|_{L^{q_2(p-1)}}^{p-1}.
\end{equation}
Then Theorem~\ref{1sobolev} gives \[\|u\|_{B^{s+\varepsilon}_{q_1,\infty}} \lesssim \|u\|_{W^{s+\varepsilon,q_1}} \text{ and } \|\mathbf{P}_ju\|_{L^{q_2(p-1)}}^{p-1} \lesssim_{\varepsilon} \|u\|_{L^{q_2(p-1)}}^{p-1}\,.\] These inequalities and \eqref{1eqB} end the proof. Note that there is no homogeneous counterpart of~(\textit{ii}).
\end{proof}

\begin{proof}[Proof of Lemma~\ref{1lemmeTechnique}] We now turn the idea explained in the introduction into a mathematical proof. An efficient way of doing so is the systematic use of the Littlewood-Paley theory. Recall that $s_p=\frac{p-3}{p-1}$. In the following, estimates for $k=1$ and $k \geqslant 2$ could be different, thus we assume $k \geqslant 2$ and explain the modifications for $k=1$ at the end. The Fourier-Plancherel theorem and the fact that the contribution for $j'=-1,0,1$ are up to a universal constant identical to the case $j'=0$ yield:
\begin{align*}
    \lefteqn{\left \vert \int_{\mathbb{R}^3} f^{(k-1)}(v_n(t)) z_n(t)^{k-1} \langle \nabla \rangle \tilde{z}_n(t) \, \mathrm{d}x \right \vert} \qquad \qquad \qquad &\\& = \left \vert \sum_{j'=-1}^{1} \sum_{j \geqslant 0} \int_{\mathbb{R}^3} \Delta_{j}(f^{(k-1)}(v_n(t)) z_n(t)^{k-1})\Delta_{j+j'}(\langle \nabla \rangle \tilde{z}_n(t)) \, \mathrm{d}x \right \vert \\
    & \lesssim \sum_{j > 2} \int_{\mathbb{R}^3} |\Delta_{j}(f^{(k-1)}(v_n(t)) z_n(t)^{k-1})||\Delta_j(\langle \nabla \rangle \tilde{z}_n(t) )| \, \mathrm{d}x \\
    & + \sum_{j=0}^2 \int_{\mathbb{R}^3}|\Delta_{j}(f^{(k-1)}(v_n(t)) z_n(t)^{k-1}(t))||\Delta_j(\langle \nabla \rangle \tilde{z}_n(t) )| \, \mathrm{d}x \\
    & =: I_1 + I_2\,,
\end{align*}
We first estimate $I_2$ using Hölder, Bernstein and Young's inequalities, where $r_k:= \frac{(k-1)(p+1)}{k}$:
\begin{align*}
    I_2 &\lesssim \|z_n(t)\|^{k-1}_{L^{r_k}} \|v_n(t)\|_{L^{p+1}}^{p-k+1} \sum_{j=0}^2 \|\langle \nabla \rangle  \Delta _j \tilde z_n(t)\|_{L^{\infty}_x} \\
    & \lesssim \|z_n(t)\|^{k-1}_{L^{r_k}} \|\tilde z_n(t)\|_{L^{\infty}_x} E_n(t)^{\frac{p-k+1}{p+1}}\\
    & \lesssim E_n(t) + \|z_n\|^{r_k}_{L^{\infty}_TL_x^{r_k}} \|\tilde z_n\|_{L^{\infty}_TL^{\infty}_x}^{\frac{p+1}{k}}\,. 
\end{align*}
For $I_1$ observe that with Hölder and Bernstein inequalities, 
\[I_1 \lesssim \sum_{j > 2} 2^{j(1-s_p)}\|\Delta_j(z_n(t)^{k-1} f^{(k-1)}(v_n(t)))\|_{L^{1}}2^{js_p}\|\Delta_j( \tilde z_n(t))\|_{L^{\infty}}\] 
so that the Hölder inequality for series gives, as only the high frequencies appeared in the sum, \[I_1 \lesssim \|z_n(t)^{k-1}f^{(k-1)}(v_n(t))\|_{\dot B^{1-s_p}_{1,\infty}}\|\tilde z_n(t)\|_{B^{s_p}_{\infty,1}}\,.\] 
Then Corollary~\ref{1coroProductRule} provides us with:
\begin{align*}
 \|z_n(t)^{k-1}&f^{(k-1)}(v_n(t))\|_{\dot B^{1-s_p}_{1,\infty}} \lesssim \|f^{(k-1)}(v_n(t))\|_{\dot B^{1-s_p}_{\frac{p+1}{p+2-k},\infty}}\|z_n(t)^{k-1}\|_{L^{\frac{p+1}{k-1}}} \\ 
 &+ \||v_n(t)|^{p-k+1}\|_{L^{\frac{p+1}{p-k+1}}}\|z_n(t)^{k-1}\|_{\dot B^{1-s_p}_{q_k,\infty}}\\
 &\lesssim \underbrace{\|f^{(k-1)}(v_n(t))\|_{\dot B^{1-s_p}_{\frac{p+1}{p+2-k},\infty}}}_{J_1}\|z_n(t)\|^{k-1}_{L^{p+1}} + E_n(t)^{\frac{p-k+1}{p+1}}\underbrace{\|z_n(t)^{k-1}\|_{\dot B^{1-s_p}_{q_k,\infty}}}_{J_2}\,, 
\end{align*}
where we recall that $\frac{1}{q_k}+\frac{p-k+1}{p+1}=1$. The chain rule from Lemma~\ref{1chainRule} will estimate the terms~$J_1$ and~$J_2$. For $J_2$, a direct application shows that: \[J_2 \lesssim \|z_n(t)\|_{\dot{B}^{1-s_p}_{q_k,\infty}} \|z_n\|_{L^{\infty}}^{k-2}.\] For $J_1$, also using Lemma~\ref{1chainRule} followed by Theorem~\ref{1sobolev}~(\textit{ii}) and then by the Gagliardo-Nirenberg inequality, Theorem~\ref{1gagliardoNirenberg} yield:
\begin{align*}
    J_1 & \lesssim  \|v_n(t)\|_{\dot B^{1-s_p}_{\frac{p+1}{2},\infty}}\||v_n(t)|^{p-k}\|_{L^{\frac{p+1}{p-k}}} \\
    & \lesssim \|v_n(t)\|_{\dot W^{1-s_p, \frac{p+1}{2}}}E_n(t)^{\frac{p-k}{p+1}} \\
    & \lesssim \|\nabla v_n(t)\|_{L^2}^{1-\alpha} \|v_n(t)\|_{L^{p+1}}^{\alpha} E_n(t)^{\frac{p-k}{p+1}},
\end{align*}
where $\alpha \in [0,s_p]$ is such that $\frac{2}{p+1}= \frac{1-s_p}{3} +\frac{1-\alpha}{6}+\frac{\alpha}{p+1}$, \textit{i.e} $\alpha = s_p = \frac{p-3}{p-1}$. Finally we get: 
\begin{equation}
\label{1major}
J_1 \lesssim E_n(t)^{\frac{p-k}{p+1} + \frac{\alpha}{p+1}+\frac{1-\alpha}{2}} = E_n(t)^{\frac{p-k}{p+1} + \frac{2}{p+1}}.
\end{equation}
This yields $J_1 \lesssim 1+E_n(t)$. 

For $k=1$ one can proceed in the same manner: split the left-hand side of \eqref{1estimeeTechnique} into $I_1$ and $I_2$ and write $I_2 \lesssim E_n(t)+\|\tilde{z}_n\|_{L^{\infty}_TL^{\infty}_x}^{p+1}$. Remark that the estimate of $I_1$ is handled using the same arguments as for $J_1$ above leading to \[I_1 \lesssim \|f(v_n(t))\|_{\dot B^{1-s_p}_{1,\infty}} \lesssim E_n(t),\] which ends the proof. 
\end{proof} 

\subsection{Passing to the limit and end of the proof}\label{1sousSectionLimit}

The linear part is handled \textit{via} the following elementary lemma:

\begin{lemma}[Linear compactness]\label{1linearLemma} For every $q \in [1,\infty]$ and $r \in (1,\infty)$ one has \[\|z_{n} - z\|_{L^{q}((-T,T),L^r(\mathbb{R}^3))} \underset{n \to \infty}{\longrightarrow}0\,.\] 
\end{lemma}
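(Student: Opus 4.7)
The key observation is that the Littlewood--Paley projector $\mathbf{P}_n$ commutes with the wave propagator $S(t)$, so that $z_n - z = -(I - \mathbf{P}_n)z$. My plan is to reduce the question to showing $(I - \mathbf{P}_n)z \to 0$ in $L^q((-T,T),L^r(\mathbb{R}^3))$, which is a purely deterministic statement once the initial data (and hence $z$) are fixed.

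First I would handle the pointwise-in-time convergence: for $1 < r < \infty$ the Mikhlin multiplier theorem yields uniform boundedness of $\mathbf{P}_n$ on $L^r$, and a standard density argument gives $\mathbf{P}_n f \to f$ in $L^r$ for every $f \in L^r$. Consequently, for each $t$ with $z(t) \in L^r$ one has $\|(I - \mathbf{P}_n)z(t)\|_{L^r} \to 0$ with uniform bound $\lesssim \|z(t)\|_{L^r}$. For $q < \infty$ I would then conclude by dominated convergence, provided $z \in L^q((-T,T),L^r)$; this is ensured by the probabilistic Strichartz estimate of Proposition~\ref{1propStrichartz} on the large-measure sets $\Omega_{T,\eta}$ constructed earlier (possibly restricting to the relevant $r \in [2,\infty)$ where the Strichartz estimates apply).

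The case $q = \infty$ will, I expect, be the main obstacle, since dominated convergence only yields pointwise-in-$t$ convergence and not uniform convergence. Here I would exploit a quantitative gain: summing over dyadic pieces and using Bernstein, one has $\|(I - \mathbf{P}_n)f\|_{L^r} \lesssim 2^{-n\sigma}\|f\|_{B^{\sigma}_{r,1}}$ for any $\sigma > 0$. By Proposition~\ref{1propStrichartzBesov} (with an arbitrarily small derivative loss) $z \in L^\infty((-T,T),B^{\sigma}_{r,1})$ on $\Omega_{T,\eta}$, which produces $\|z_n - z\|_{L^\infty_t L^r_x} \lesssim 2^{-n\sigma}\|z\|_{L^\infty_t B^{\sigma}_{r,1}} \to 0$. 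Thus the Besov-space probabilistic Strichartz estimate is the crucial ingredient that makes the $L^\infty$-in-time endpoint work, while the other ranges reduce to routine Littlewood--Paley facts.
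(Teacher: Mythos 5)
Your decomposition $z_n - z = -(I-\mathbf{P}_n)z$ and your treatment of the case $q<\infty$ coincide with the paper's proof, which also argues pointwise-in-$t$ convergence in $L^r$ and then concludes by dominated convergence using the uniform bound $\|z_n(t)-z(t)\|_{L^r}\lesssim\|z(t)\|_{L^r}$ and the assumption $z\in L^q((-T,T),L^r)$. (The paper phrases the pointwise step simply as ``$\mathbf{P}_n$ is a mollifier''; you invoke Mikhlin plus density. Both are fine, though Mikhlin is overkill --- $\mathbf{P}_n$ is convolution with a fixed Schwartz kernel rescaled at unit $L^1$ norm, so uniform $L^r$ boundedness and strong convergence follow elementarily.)

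Where you genuinely improve on the paper is the endpoint $q=\infty$. You are right that dominated convergence yields convergence in $L^q_t$ only for $q<\infty$, and that pointwise-in-$t$ convergence together with a uniform bound does not give convergence in $L^\infty_t L^r_x$. The paper applies the Lebesgue convergence theorem without distinguishing this case, which is a (small, and apparently unused downstream) gap. Your quantitative Bernstein estimate
\[\|(I-\mathbf{P}_n)f\|_{L^r}\leqslant\sum_{j> n}\|\Delta_j f\|_{L^r}\lesssim 2^{-n\sigma}\|f\|_{B^{\sigma}_{r,1}}\,,\]
combined with the Besov-space probabilistic Strichartz estimate \eqref{1probaStrichartzBesov2} to ensure $z\in L^\infty_T B^{\sigma}_{r,1}$ on the relevant large-probability set, closes this cleanly and in fact gives a rate $2^{-n\sigma}$. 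Two small remarks: (a) this Besov argument only needs an arbitrarily small $\sigma>0$, which is available since the initial data lie in $\mathcal H^s$ with $s>s_p>0$; (b) as with the paper's own proof, the hypothesis $z\in L^q_T L^r_x$ (or $L^\infty_T B^\sigma_{r,1}$) must be read as holding on the probability sets $\Omega_{T,\eta}$ where the Strichartz bounds are enforced, and those bounds are stated for $r\geqslant 2$, so the full claimed range $r\in(1,\infty)$ is really inherited from whatever integrability one has assumed on $z$ rather than proved here --- but that caveat is already present in the paper's formulation.
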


\begin{proof} By definition we have $z_n-z=\left(\operatorname{id}-\mathbf{P}_n\right)z$. As $\mathbf{P}_n$ is a mollifier it follows that for every $t \in (-T,T)$, $\|z_n(t)-z(t)\|_{L^r} \to 0$. Now observe that $\|z_n(t)-z(t)\|_{L^r} \leqslant 2 \|z(t)\|_{L^r}$ and $z \in L^{q}((-T,T),L^r(\mathbb{R}^3))$ so that the Lebesgue convergence theorem gives the desired result. 
\end{proof}

The nonlinear part $v_n$ will be handled using the following compactness result. 

\begin{lemma}[Nonlinear compactness]\label{1nonLinear} There exists a function $v$ that belongs to the space: \[H^1((-T,T) \times \mathbb{R}^3) \cap L^{p+1}((-T,T) \times \mathbb{R}^3) \cap \mathcal{C}^0([-T,T],L^2(\mathbb{R}^3))\] such that up to extraction: 
\begin{enumerate}[label=(\textit{\roman*})]
    \item $v_n \underset{n \to \infty}{\longrightharpoonup} v$ in $H^1((-T,T) \times \mathbb{R}^3)$,
    \item $v_n \underset{n \to \infty}{\longrightarrow} v$ in $L^2_{\text{loc}}((-T,T)\times \mathbb{R}^3)$,
    \item $v_n \underset{n \to \infty}{\longrightarrow} v$ in $L^{p}_{\text{loc}}((-T,T) \times \mathbb{R}^3)$.
\end{enumerate}
\end{lemma}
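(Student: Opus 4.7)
The plan is to combine the uniform bounds provided by Proposition~\ref{1propProbabilistic} with standard weak and Aubin--Lions-type compactness arguments.

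\textbf{Step 1 (uniform bounds).} On the event $\Omega_{T,\eta}$, Proposition~\ref{1propProbabilistic} gives $\sup_n \sup_{t\in(-T,T)} E_n(t)\le C$, hence uniform bounds for $\partial_t v_n$ and $\nabla v_n$ in $L^\infty((-T,T),L^2(\mathbb{R}^3))$ and for $v_n$ in $L^\infty((-T,T),L^{p+1}(\mathbb{R}^3))$. Since $v_n(0)=0$, the fundamental theorem of calculus yields
\[
\|v_n(t)\|_{L^2}\le \int_0^{|t|}\|\partial_s v_n(s)\|_{L^2}\,\mathrm{d}s\lesssim T,
\]
so $v_n$ is also bounded in $L^\infty_t L^2_x$, and in particular in $H^1((-T,T)\times\mathbb{R}^3)$ and in $L^{p+1}((-T,T)\times\mathbb{R}^3)$.

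\textbf{Step 2 (weak extraction).} By Banach--Alaoglu, we may extract a subsequence, still denoted $(v_n)$, such that $v_n\rightharpoonup v$ in $H^1((-T,T)\times\mathbb{R}^3)$ and $v_n\overset{\ast}{\rightharpoonup}v$ in $L^\infty((-T,T),L^{p+1}(\mathbb{R}^3))$. The limit $v$ therefore belongs to $H^1((-T,T)\times\mathbb{R}^3)\cap L^{p+1}((-T,T)\times\mathbb{R}^3)$ by lower semicontinuity, which proves item (\textit{i}).

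\textbf{Step 3 (strong local compactness).} For every compact $K\subset\mathbb{R}^3$, $v_n$ is bounded in $L^2((-T,T),H^1(K))$ while $\partial_t v_n$ is bounded in $L^2((-T,T),L^2(K))$. Since the embedding $H^1(K)\hookrightarrow L^2(K)$ is compact and $L^2(K)\hookrightarrow H^{-1}(K)$ is continuous, the Aubin--Lions lemma yields strong convergence in $L^2((-T,T),L^2(K))$. Exhausting $\mathbb{R}^3$ by such compact sets along a diagonal subsequence gives $v_n\to v$ in $L^2_{\operatorname{loc}}((-T,T)\times\mathbb{R}^3)$, which is item (\textit{ii}). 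Item (\textit{iii}) follows from (\textit{ii}) by interpolation: for any relatively compact $K\subset(-T,T)\times\mathbb{R}^3$ and $\theta\in(0,1)$ with $\frac{1}{p}=\frac{\theta}{2}+\frac{1-\theta}{p+1}$,
\[
\|v_n-v\|_{L^p(K)}\le \|v_n-v\|_{L^2(K)}^{\theta}\|v_n-v\|_{L^{p+1}(K)}^{1-\theta},
\]
where the last factor is bounded uniformly in $n$ thanks to Step~1 and lower semicontinuity, and the first tends to zero by (\textit{ii}).

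\textbf{Step 4 (time continuity).} Since $\partial_t v\in L^\infty((-T,T),L^2(\mathbb{R}^3))$ (as a weak limit of the uniformly $L^\infty_t L^2_x$-bounded $\partial_t v_n$), the map $t\mapsto v(t)$ is Lipschitz from $[-T,T]$ to $L^2(\mathbb{R}^3)$; in particular $v\in\mathcal{C}^0([-T,T],L^2(\mathbb{R}^3))$.

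The only delicate point is the domain being unbounded: the Rellich--Kondrachov embedding is only locally compact on $\mathbb{R}^3$, which forces the conclusions (\textit{ii})--(\textit{iii}) to be stated locally. This is, however, exactly what is needed in Section~\ref{1sousSectionLimit} to pass to the limit in the nonlinear term $(z_n+v_n)|z_n+v_n|^{p-1}$ against compactly supported test functions.
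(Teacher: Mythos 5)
Your proof is correct and follows essentially the same route as the paper: uniform bounds from Proposition~\ref{1propProbabilistic}, Banach--Alaoglu for the weak $H^1$ limit, local strong compactness in $L^2$ (you invoke Aubin--Lions where the paper applies Rellich--Kondrachov directly to the space-time $H^1$ bound, an equivalent step), and interpolation between $L^2_{\mathrm{loc}}$ and $L^{p+1}$ for item (\textit{iii}). Your Step~4, deducing $\mathcal{C}^0([-T,T],L^2)$ from $\partial_t v\in L^\infty_t L^2_x$ via the Lipschitz estimate, is in fact slightly cleaner than the paper's brief appeal to Ascoli, which as stated glosses over the lack of pointwise relative compactness in $L^2(\mathbb{R}^3)$.
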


\begin{proof} (\textit{i}) follows from the boundedness of $(v_n)_{n \geqslant 0}$ in the space $H^1((0,T) \times \mathbb{R}^3)$ and the Banach-Alaoglu theorem in Hilbert spaces. This bound is indeed obtained via the energy control of $v$ which immediately implies \[\sup_{n \geqslant 0} \left\{ \|\nabla v_n\|_{L^{\infty}((-T,T),L^2)} + \|\partial _t v_n\|_{L^{\infty}((-T,T),L^2)} \right\} < \infty.\] Since $(-T,T)$ is a bounded interval, we obtain $\sup_{n \geqslant 0} \|v_n\|_{\dot H ^1((-T,T) \times \mathbb{R}^3)} < \infty$. The Taylor formula in time gives \[\|v_n(t)\|_{L^2} \leqslant \|v_n(0)\|_{L^2} + \int_{-T}^T\|\partial _t v_n(t')\|_{L^2} \, \mathrm{d}t'.\] Using the $L^{\infty}((-T,T),L^2)$ bound for $\partial_t v_n$ yields a uniform bound for $v_n$ in $L^{\infty}((-T,T),L^2)$ and thus in $L^{2}((-T,T),L^2)$. 

Finaly the sequence $(v_n)_{n \geqslant 0}$ is bounded in the space $H^1((-T,T) \times \mathbb{R}^3)$. The Banach-Alaoglu theorem proves that up to extraction we can assume that $v_n$ is weakly convergent to a function $v$ that belongs to $H^1((-T,T) \times \mathbb{R}^3)$. Note that the uniform bound for $\partial_t u_n$ in $L^{\infty}((0,T),L^2)$ allow to use the Ascoli theorem which ensures that $v \in \mathcal{C}^0([0,T],L^2)$ 

(\textit{ii}) Let $K \subset (T,T) \times \mathbb{R}^3$ be a compact set. Then the fact that the embedding $H^1 \hookrightarrow L^2_{\text{loc}}$ is compact (this is the Rellich-Kondrakov theorem), and the bound from (\textit{i}) proves that up to another extraction, (\textit{ii}) holds. Up to a diagonal extraction we can assume that this sequence converges for any compact set $K$. 

(\textit{iii}) We have proved local compactness for $(v_n)_{n \geqslant 0}$ in $L^2$ in both space and time, and a uniform bound for $(v_n)_{n \geqslant 0}$ in $L^{p+1}$ given by Proposition~\ref{1propProbabilistic}. We can interpolate those two, and for every compact $K$: \[\|v_n-v\|_{L^{p}(K)} \lesssim \|v_n-v\|^{\alpha}_{L^2(K)}\|v_n-v\|^{1-\alpha}_{L^{p+1}(K)} \lesssim \|v_n-v\|^{\alpha}_{L^2(K)},\] with $\alpha \in (0,1)$ such that $\frac{3}{p}= \frac{\alpha}{2}+\frac{1-\alpha}{p+1}$. This proves the convergence. 
\end{proof}

We are now ready for the proof of Theorem~\ref{1mainTheorem}.
\begin{proof}[End of the proof of Theorem~\ref{1mainTheorem}] Without loss of generality, assume that $s \in (\frac{p-3}{p-1},1)$, as for $s \geqslant 1$, $\mathcal{H}^s$ initial data are also in $\mathcal{H}^{1-\varepsilon}$. 

Set $\Omega_{T, \frac{\eta}{2}}$ as in Proposition~\ref{1propProbabilistic} and consider $\Omega _{T,\frac{\eta}{2}}' := \{\|\langle \nabla \rangle^{s} z\|_{L^{p+1}((-T,T) \times \mathbb{R}^3)} \leqslant \lambda\}$ with $\lambda >0$ large enough to ensure that $\mathbb{P}(\Omega _{T,\frac{\eta}{2}}') \geqslant \frac{\eta}{2}$. Now set $\tilde{\Omega}_{T,\eta } := \Omega_{T, \eta /2} \cap \Omega_{T, \eta /2}'$ so that $\mathbb{P}(\tilde{\Omega}_{T,\eta}) \geqslant 1- \eta$. Now we will only deal with intial data randomization arising from $\tilde{\Omega}_{T,\eta}$. This in particular enables to use the compactness lemmata proven before. Take $\varphi$ an admissible test function from Defintion~\ref{1weakSolution}. The weak convergence $u_n \rightharpoonup u$ in $H^1((-T,T) \times \mathbb{R}^3)$, the strong convergence $z_n \rightarrow z$ and $v_n \rightarrow v$ both in $L^p_{\text{loc}}((-T,T) \times \mathbb{R}^3)$ and the fact that $\varphi$ is compactly supported in space and time in $(-T,T)$ proves that 
\begin{align}
    0& =\int_{-T}^T \int_{\mathbb{R}^3} \left( \partial _t v_n(t) \partial _t \varphi (t) - \nabla v_n(t) \cdot \nabla \varphi (t) - (z_n(t)+v_n(t))|z_n(t)+v_n(t)|^{p-1} \varphi (t) \right) \, \mathrm{d}x \, \mathrm{d}t \notag \\
    & \underset{n \to \infty}{\longrightarrow}\int_{-T}^T \int_{\mathbb{R}^3} \left( \partial _t v(t) \partial _t \varphi (t) - \nabla v(t) \cdot \nabla \varphi (t) - (z(t)+v(t))|z(t)+v(t)|^{p-1} \varphi (t) \right) \, \mathrm{d}x \, \mathrm{d}t =0. \label{1eqT}
\end{align}

We have proved that for each $\eta >0$ there exists a set $\tilde{\Omega}_{T,\eta}$ with measure greater than $1- \eta$ such that for initial random data generated with $\omega \in \tilde{\Omega}_{T,\eta}$ there exists a weak solution to \eqref{1SLW} on the time interval $(-T,T)$. Now apply the above with $\eta :=\frac{1}{n^2}$ for each $n \geqslant 2$ and set $A_n := \Omega_{n,\frac{1}{n^2}}^c$. Then $\sum _{n \geqslant 0} \mathbb{P}(A_n) < + \infty$ and by the Borel-Cantelli lemma it follows that $\mathbb{P}(\limsup A_n)=0$, where $\limsup A_n := \bigcap_{n \geqslant 0} \bigcup_{k \geqslant n} A_k$ so that $\Omega_{T}:=(\limsup A_n)^c$ is a set of probability $1$ where existence of weak solutions on $(-T,T)$ is granted. Finaly we set $\tilde{\Omega} := \bigcap_{n \geqslant 1} \Omega_n$ which is of probability one on which a global weak solution exists and satisfies \eqref{1eqT} for every $T>0$ and every compactly supported test function $\varphi \in \mathcal{C}^2((-T,T) \times \mathbb{R}^3)$. 
\end{proof}

The proof of the continuity in time part of Corollary~\ref{1coroAdditional} is a consequence of the Aubin-Lions compactness theorem that we recall. For a proof see \cite{boyer}. 

\begin{theorem}[Aubin-Lions]\label{1aubinLions} Let $X_0 \hookrightarrow X \hookrightarrow X_1$ be three Banach spaces, the first embedding being compact and the second being continuous. Let $p,q \in [1,\infty]$ and \[W=\{u \in L^p((-T,T),X_0), \partial _t u \in L^q((-T,T),X_1)\}\,.\] Then:
\begin{enumerate}[label=(\textit{\roman*})]
    \item If $p<\infty$ then $W \hookrightarrow L^p((-T,T),X)$ is compact. 
    \item If $p=\infty$ and $q>1$ then $W \hookrightarrow \mathcal{C}^0([-T,T],X)$ is compact. 
\end{enumerate}
\end{theorem}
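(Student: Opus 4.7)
The plan for proving Theorem~\ref{1aubinLions} is to proceed via the classical Ehrling-plus-Fréchet–Kolmogorov approach, treating the two regimes $p<\infty$ and $p=\infty$ in parallel from the same ingredients.

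First I would establish the Ehrling interpolation inequality: for every $\varepsilon > 0$ there exists $C_\varepsilon > 0$ such that
\[
\|v\|_X \leqslant \varepsilon \|v\|_{X_0} + C_\varepsilon \|v\|_{X_1} \quad \text{for all } v \in X_0.
\]
This follows by contradiction from the two embedding hypotheses: were it false, a suitably normalized sequence would be bounded in $X_0$ (hence precompact in $X$ by the compact embedding), yet converge to zero in $X_1$, and its $X$-limit would have to be zero by continuity of $X \hookrightarrow X_1$, contradicting the normalization. With this tool in hand, given a bounded sequence $(u_n) \subset W$, integrating the inequality pointwise in $t$ yields
\[
\|u_n - u_m\|_{L^p((-T,T),X)} \leqslant 2\varepsilon M + C_\varepsilon \|u_n - u_m\|_{L^p((-T,T),X_1)},
\]
where $M$ uniformly bounds $\|u_n\|_{L^p((-T,T),X_0)}$. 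Thus precompactness in $L^p((-T,T),X)$ is reduced to precompactness in $L^p((-T,T),X_1)$, and the analogous inequality taken with $L^\infty$ in time reduces precompactness in $\mathcal{C}^0([-T,T],X)$ to precompactness in $\mathcal{C}^0([-T,T],X_1)$.

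For part~(i) I would invoke the vector-valued Fréchet–Kolmogorov compactness criterion on $L^p((-T,T),X_1)$: the uniform translation estimate
\[
\|u_n(\cdot + h) - u_n(\cdot)\|_{L^p((-T,T),X_1)} \lesssim |h|^{1-1/q} \|\partial_t u_n\|_{L^q((-T,T),X_1)}
\]
provides equicontinuity under time translations, while the $X_1$-precompactness of the time-averages $\int_E u_n \, dt$ over Borel sets $E$ follows from the $L^p((-T,T),X_0)$ bound composed with the compact embedding $X_0 \hookrightarrow X \hookrightarrow X_1$. For part~(ii) the hypothesis $q>1$ is precisely what converts the derivative bound into a uniform Hölder estimate
\[
\|u_n(t) - u_n(s)\|_{X_1} \leqslant |t-s|^{1-1/q} \|\partial_t u_n\|_{L^q((-T,T),X_1)},
\]
so $(u_n)$ is equicontinuous as a family of maps $[-T,T] \to X_1$; combined with pointwise precompactness in $X_1$ obtained from the $L^\infty((-T,T),X_0)$ bound and the compact embedding, Arzelà–Ascoli produces a subsequence converging in $\mathcal{C}^0([-T,T],X_1)$, and the sup-in-$t$ Ehrling estimate above upgrades this to convergence in $\mathcal{C}^0([-T,T],X)$.

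The main technical obstacle is the correct formulation and verification of the vector-valued Fréchet–Kolmogorov hypotheses for part~(i): the pointwise-precompactness condition is subtler than in the scalar $L^p$ case and must either be phrased in terms of averages $\int_E u_n\,dt$ or, following Simon's route, deduced by an approximation by step functions together with the derivative-based translation estimate. Once this technical step is carried out, both assertions of the theorem follow from the same two ingredients, namely the Ehrling inequality and the equicontinuity in time stemming from the $L^q$ bound on $\partial_t u_n$.
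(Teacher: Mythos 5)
The paper does not prove this theorem; it is quoted with a citation to \cite{boyer}, so there is no in-paper argument to compare against. Your sketch is the classical Ehrling-plus-compactness proof of the Aubin--Lions--Simon theorem and is essentially correct, including your identification of the subtle point in part~(\textit{i}).

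One small correction: the translation estimate $\|u_n(\cdot+h)-u_n(\cdot)\|_{L^p((-T,T),X_1)}\lesssim|h|^{1-1/q}\|\partial_t u_n\|_{L^q((-T,T),X_1)}$ degenerates when $q=1$, which part~(\textit{i}) allows. The step-function route you mention is what saves this case: replacing $u_n$ by its interval averages over a partition of mesh $\delta$ costs at most $\delta^{1/p}\|\partial_t u_n\|_{L^1((-T,T),X_1)}$ in $L^p((-T,T),X_1)$, and the averages lie in a bounded subset of $X_0$, hence precompact in $X_1$. Likewise in part~(\textit{ii}), the $L^\infty((-T,T),X_0)$ bound gives $\|u_n(t)\|_{X_0}\leqslant M$ only for almost every $t$; the uniform H\"older continuity in $X_1$ (this is where $q>1$ enters) transports the pointwise precompactness in $X_1$ from a full-measure set to all $t$. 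Both points are handled in \cite{boyer}.
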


\begin{proof}[Proof of Corollary~\ref{1coroAdditional}] Let us first prove the continuity in time. As the linear solution $(z,\partial_tz)$ has regularity $\mathcal{C}^0(\mathbb{R},\mathcal{H}^s)$, thanks to Lemma~\ref{1linearLemma}, it is sufficient to prove the needed continuity on $(v,\partial _t v)$ on every interval $[-T,T]$. Fix such an interval and let $z_n,v_n$ be the regularized solutions introduced in~\eqref{1rSLW}. Recall that they satisfy 
\begin{equation}
    \label{1regEq}
    \partial _t^2v_n = \mathbf{P}_n \Delta v_n - \mathbf{P}_n((z_n+v_n)|z_n+v_n|^{p-1}) 
\end{equation}
We will prove the two continuity results:
\begin{enumerate}[label=(\textit{\roman*})]
    \item $v \in \mathcal{C}^0([-T,T],H^s)$,
    \item $\partial_t v \in \mathcal{C}^0([-T,T],H^{s-1})$. 
\end{enumerate}

(\textit{i}) results from the uniform bound for $(v_n)_{n \geqslant 1}$ in $L^{\infty}((-T,T),H^1)$, the uniform bound for $(\partial _t v_n)_{n \geqslant 1}$ in $L^{\infty}((-T,T),L^2)$ given by Proposition~\ref{1propProbabilistic}, and the Aubin-Lions Theorem~\ref{1aubinLions} with $X=H^s$, $X_0=H^1$, $X_1=L^2$, $p=q=\infty$. 

(\textit{ii}) We use that \[\sup_{n \geqslant 1} \|\partial _t v_n\|_{L^{\infty}((-T,T),L^2)} < \infty\,.\] We will also need the estimate 
\begin{equation}
    \label{1order2}
    \sup_{n \geqslant 1} \|\partial^2 _t v_n\|_{L^{\infty}((-T,T),H^{-3/2})} < \infty
\end{equation}
so that another application of the Aubin-Lions theorem proves the needed continuity. In order to prove~\eqref{1order2} remark that as $H^{-1} \hookrightarrow H^{-3/2}$ we have \[\|\mathbf{P}_n\Delta v_n(t)\|_{H^{-3/2}} \lesssim \|\Delta v_n(t)\|_{H^{-1}} \lesssim \|v_n(t)\|_{H^1}\] so that
\begin{equation}
    \label{1order21}
    \sup_{n \geqslant 1} \|\mathbf{P}_n\Delta v_n\|_{L^{\infty}((-T,T),H^{-3/2})} \lesssim \sup_{n \geqslant 1} \|v_n\|_{L^{\infty}((-T,T),H^1)} < \infty\,.
\end{equation}
Remark that thanks to Proposition~\ref{1propProbabilistic}, $\left((z_n+v_n)|z_n+v_n|^{p-1}\right)_{n \geqslant 1}$ is uniformly bounded in $L^{\frac{p+1}{p}} \hookrightarrow H^{-\frac{3(p-1)}{2(p+1)}} \hookrightarrow H^{-3/2}$. Combined with \eqref{1order21} and \eqref{1regEq} gives the uniform bound \eqref{1order2} and ends the proof.

For the first part of Corollary~\ref{1coroAdditional} we need to find an invariant set of full $\mu$-measure. Consider the set \[\Theta := \{(u_0,u_1) \in \mathcal{H}^s, \|S(t)(u_0,u_1)\|_{X\cap Y \cap L^2 \cap L^{\infty}} \in L^{\infty}_{\text{loc}}(\mathbb{R})\}\,.\] 
Set $\Sigma := \Theta + \mathcal{H}^1(\mathbb{R}^3)$. This is indeed a set invariant by the flow as $S(t)(\Theta)=\Theta$. Moreover this set is of full $\mu$-measure since $\Theta$ is of full measure by the proof of Theorem~\ref{1mainTheorem}. This set also gives rise to weak solutions.  
\end{proof}

Finally we prove the finite speed of propagation when $U=\mathbb{R}^3$. 

\begin{proof}[Proof of Corollary~\ref{1finiteSpeed}] For a given $s > \frac{p-3}{p-1}$, let $(u_0,u_1) \in \mathcal{H}^s$ with compact support included in $B(0,R)$, an initial data that gives rise to a global solution constructed in the proof of Theorem~\ref{1mainTheorem}. We want to prove that the solution $u(t)$ to \eqref{1SLW} is also compactly supported, in $B(0,R+t)$. 

The finite speed of propagation is known to hold for solutions to \eqref{1SLW} as well as solutions to \eqref{1rSLW} as soon as the initial data belongs to the energy space $\mathcal{H}^1$, and propagation holds with maximum speed $1$. This proves that the approximate solutions $u_n(t)$ are supported in $B\left(0,R+t\right)$. As we know that $v_n(t) \to v$ almost everywhere, and that up to extraction $z_n(t) \to z(t)$ almost everywhere, $u(t)=z(t)+v(t)$ is an almost everywhere pointwise limit of the $u_n(t)$, consequently $\operatorname{supp} u(t) \subset B(0,R+t)$. 
\end{proof}

\section{Proof of Theorem~\ref{1th2}}\label{1sectionTh2} The proof of Theorem~\ref{1th2} uses Proposition~\ref{1propProbabilistic5} which proof, as explained in the introduction, differs from the one of Proposition~\ref{1propProbabilistic} in avoiding any $L^{\infty}_T$ and $L^{\infty}_x$ estimates on the linear part $z_n,\tilde{z}_n$.  

Let $(u_0,u_1) \in \mathcal{H}^{s_p}$ and $z_n$ be the solution to the linear equation \eqref{1LW} with initial data $(z_n(0),\partial _t z_n(0))=(\mathbf{P}_n u_0, \mathbf{P}_n u_1)$ and $v_n$ the unique smooth global solution to the perturbed nonlinear wave equation, which is energy subcritical ($p<5$): 

\begin{equation}
\label{1r}
    \left \{
    \begin{array}{cc}
         \partial_t^2 v_n - \Delta v_n + |z_n+v_n|^{p-1}(z_n+v_n)=0, \\
         (v_n(0), \partial _t v_n(0))=(0,0).
    \end{array}
    \right.
\end{equation} 

We recall, as above, the energy of the nonlinear part $v_n$:  \[E_n(t) := \int_{\mathbb{R}^3} \left( \frac{|\partial _t v_{n}(t)|^2}{2} + \frac{|\nabla v_{n}(t)|^2}{2} + \frac{|v_n(t)|^{p+1}}{p+1}\right) \, \mathrm{d}x\,.\]

We state the main result of this section. 

\begin{proposition}[Probabilistic \textit{a priori} estimates for $s=s_p$]\label{1propProbabilistic5} Let $p \in (3,5)$, $T>0$ and $\eta \in (0,1)$. There exists a measurable set $\Omega_{T,\eta} \subset \Omega$ and a constant $C(T, \eta, \|(u_0,u_1)\|_{\mathcal{H}^{s_p}})$ which depends only on $T,\eta, \|(u_0,u_1)\|_{\mathcal{H}^{s_p}}$ such that:
\begin{enumerate}[label=(\textit{\roman*})]
    \item $\mathbb{P}(\Omega_{T,\eta}) \geqslant 1- \eta$.
    \item For every $\omega \in \Omega_{T,\eta}$, if the initial data for $u_n$ is attached to $\omega$ \textit{via} the randomization map of $(u_0,u_1)$ then: \[\sup_{n \geqslant 0} \sup_{t \in(0,T)} E_n(t) \leqslant C(T, \eta, \|(u_0,u_1)\|_{\mathcal{H}^{s_p}}).\]
\end{enumerate}
\end{proposition}

Once Proposition~\ref{1propProbabilistic5} is obtained, the proof of Theorem~\ref{1th2} follows from the deterministic theory developed in~\cite{sunXia} which asserts that local solutions exists in $\mathcal{H}^s$ and can be globalized under energy control conditions. We refer to~\cite{sunXia} for details. 

\begin{proposition}[Reduction to global bounds] Let $v_n$ be the solution to~\ref{1r}. Then if ther exists $C(T,\|-u_0,u_1)\|_{\mathcal{H}^s})< \infty$ such that 
\[\sup_{n \geqslant 0} \sup_{t \in (0,T)} E(t) \leqslant C(T, \|(u_0,u_1)\|_{\mathcal{H}^s})\,,\]
then ther exists a unique global solution $u := z + v$ to~\ref{1SLW}, where we recall that $z$ solve~\ref{1LW}. 
\end{proposition}

\begin{remark} This reduction to global bounds is typical of the subcritical equations. Here $p<5$ is energy subcritical, thus \eqref{1r} enjoys a subcritical well-posedness theory and maximal time depend only on the size of the norm. In the critical case, $p=5$ a similar results holds, which is much harder to prove, see~\cite{pocovnicu}, Section~4. 
\end{remark}

It remains to prove Proposition~\ref{1propProbabilistic5}. 

\begin{proof}[Proof of Proposition~\ref{1propProbabilistic5}] We first establish energy estimates and will fix the probabilistic setting later. 

The proof begins with the same energy estimates treated in Proposition~\ref{1propProbabilistic}, using the same integration in time technique and a Taylor expansion of $|x|^{p-1}x$ at order $1$. We omit the details and obtain: 
\begin{align*}
   E_n(t)& = \int_0^t\int_{\mathbb{R}^3} \langle \nabla \rangle \tilde{z}_n(t') v_n^{p}(t') \, \mathrm{d}x \, \mathrm{d}t' - \int_{\mathbb{R}^3} z_n(t')v_n^p(t') \, \mathrm{d}x \\ 
   &- \int_0^t \int_{\mathbb{R}^3} \partial _t v_n(t')N(z_n,v_n)(t') \, \mathrm{d}x \, \mathrm{d}t' \\
&=:J_1+J_2+J_3 
\end{align*}
where $|N(z_n,v_n)(t')| \lesssim |z_n(t')|^2|v_n(t')|^{p-2}+|z_n(t')|^p$. The Hölder inequality and the Young inequality estimate the term $J_2$ by:
\begin{equation}
    \label{1J2}
    J_2 \leqslant \frac{1}{2} E_n(t) + C\|z_n\|_{L^{p+1}_{T,x}}^{p+1},
\end{equation}

In a similar fashion, one observes that 
\[\int_0^t \int_{\mathbb{R}^3} |\partial _t v_n(t')||z_n(t')|^p \, \mathrm{d}x \, \mathrm{d}t' \lesssim \int_0^t E_n(t')\, \mathrm{d}t' + \|z_n\|_{L^{2p}_{T,x}}^{2p}\] 
and 
\[\int_0^t \int_{\mathbb{R}^3} |\partial _t v_n(t')||v_n|^{p-2}|z_n(t')|^2 \, \mathrm{d}x \, \mathrm{d}t' \lesssim \int_0^t \|z_n(t')\|_{L^{\frac{4(p+1)}{5-p}}}^{2}E_n(t')^{\frac{1}{2}+\frac{p-2}{p+1}}\, \mathrm{d}t'\,.\] Along with the fact that $p<5$ these inequalities yield

\begin{equation}
    \label{1J3}
    J_3 \lesssim \|z_n\|_{L^{2p}_{T,x}}^{2p} + \|z_n\|_{L^{\infty}_TL^{\frac{4(p+1)}{5-p}}_x}^2 \left(1 + \int_0^t E_n(t')\, \mathrm{d}t'\right).
\end{equation}

For the term $J_1$ we write $J_1=\displaystyle\int_0^tK(t')\,\mathrm{d}t'$ where 
\[K(t):=\int_{\mathbb{R}^3}\langle \nabla \rangle \tilde{z}_n(t) |v_n|^{p-1}(t)v_n(t)\,\mathrm{d}x\,. \]
We use the duality between $W^{s_p-1,q}$ and $W^{1-s_p,q'}$ where $\frac{1}{q}+\frac{1}{q'}=1$. This gives
\[K(t) \lesssim \|\langle \nabla \rangle \tilde{z}_n(t)\|_{W^{s_p-1,q}}\||v_n|^{p-1}(t)v_n(t)\|_{W^{1-s_p,q'}}\,,\]
where the constant does not depend on $q$. Using the chain rule in Sobolev spaces, Theorem~\ref{1sobolevChain} yields: 
\[K(t) \lesssim \|\tilde{z}_n(t)\|_{W^{s_p,q}}E_n(t)^{\frac{p-1}{p+1}}\|v_n(t)\|_{W^{1-s_p,\tilde{q}}}\]
where $\frac{1}{q}+\frac{1}{\tilde{q}}+\frac{p-1}{p+1}=1$. Since $p<5$, the term $\|v_n(t)\|_{W^{1-s_p,\tilde{q}}}$ can be interpolated using the Gagliardo-Nirenberg theorem, see Theorem~\ref{1gagliardoNirenberg} applied between $H^1$ and $L^{p+1}$ (a computation ensures that $\alpha < s_p$ in the theorem), and gives
\[K(t) \lesssim \|\tilde{z}_n(t)\|_{W^{s_p,q}} E_n(t)^{1+\alpha (q)}\,,\]
where
\[\alpha (q)=\frac{3(p-1)(2q'-p-1)}{\tilde{q}(p+1)(5-p)}=\frac{3(p-1)}{q(5-p)}=\frac{\beta_p}{q}\,,\]
with $\beta _p := \frac{3(p-1)}{5-p}$ so that 
\begin{equation}
    \label{1J1}
    J_1 \lesssim \int_0^t \|z_n(t')\|_{W^{s_p,q}} E_n(t')^{1+\frac{\beta _p}{q}}\, \mathrm{d}t'
\end{equation}
Finally assembling \eqref{1J1}, \eqref{1J2} and \eqref{1J3} together yields the existence of a universal constant $C=C(T)$ such that 
\begin{align}
    E_n(t) & \leqslant C\left( \|z_n\|^{2p}_{L^{2p}_{T,x}} + \|z_n\|_{L_{T,x}^{p+1}}^{p+1} + \|z_n\|^2_{L^{\infty}_TL_x^{\frac{4(p+1)}{5-p}}} \right) \\ 
    &+ C \|z_n\|^2_{L^{\infty}_TL_x^{\frac{4(p+1)}{5-p}}} \int_0^t E_n(t') \, \mathrm{d}t' +C \int_0^t \notag \|\tilde{z}_n(t')\|_{W^{s_p,q}}E_n(t')^{1+\frac{\beta_p}{q}} \, \mathrm{d}t'\\
    & =: a(z_n)+b(z_n)\int_0^t E_n(t') \, \mathrm{d}t' + \int_0^t c_q(\tilde{z}_n)(t')E_n(t')^{1+\frac{\beta_p}{q}} \, \mathrm{d}t' \notag
\end{align}

We now provide the Yudovich argument, following closely \cite{burqTzvetkov}, which we recalled in the introduction. Set $\lambda_0$ large enough such that the set \[\Omega_0:= \{a(z_n)+b(z_n) \leqslant \lambda_0, \text{ for all } n \geqslant 1\}\] is such that $\mathbb{P}(\Omega _0) \geqslant 1- \frac{\eta}{2}$. Note that such a $\lambda _0$ exists thanks to the conclusion of Proposition~\ref{1propStrichartz} and the fact that $\|\mathbf{P}_n(u_0,u_1)\|_{L^2} \leqslant \|(u_0,u_1)\|_{L^2}$; and $\lambda_0$ depends on $\eta, T$. 

Let $q_0 \geqslant 1$ an integer that will be chosen later. As we have seen in the proof of Proposition~\ref{1propStrichartz}, for all $\lambda >0$ and $p \geqslant q \geqslant q_0$: 
\[\mathbb{P}\left(\|\tilde{z}_n\|_{L^2_TW^{s_p,q}}>\lambda \right) \leqslant \left(\frac{C\sqrt{p}}{\lambda}\right)^p\] where $C=C(\|(u_0,u_1)\|_{\mathcal{H}^{s_p}},T)$ can be chosen independently of $n$ as explained above. For $q \geqslant q_0$ set \[\Omega_q:=\{\|\tilde{z}_n\|_{L^2_TW^{s_p,q}} \leqslant 2 C\sqrt{q}, \text{ for all } n\geqslant 1\}\cap \Omega_0\,.\] When applied with $p=q$ and $\lambda = 2C\sqrt{q}$ we observe that $\mathbb{P}(\Omega_q^{c}) \leqslant 2^{-q}$.

We then use a bootstrap argument. Define \[A_q:=\left\{ t \geqslant 0, \; \sup_{n \geqslant 0}E_n(t) \leqslant \lambda_0^{\frac{q}{\beta _p}}\right\}\] and $t_q^{*}:= \sup A_q$. 
We claim that there is a positive constant $\alpha >0$ which does not depend on $q$ such that 
\begin{equation} \label{1conditionTemps}t_q^{*} \geqslant \alpha q \,.
\end{equation} 
Indeed, for $t \in A_q$ and $n \geqslant 0$ we can write that \[E_n(t) \leqslant \lambda _0+\int_0^t \left(\lambda_0+\|\tilde{z}_n(t')\|_{W^{s_p,q}}\right)E_n(t')\, \mathrm{d}t'\,.\]
Then the Grönwall lemma provides us with
\[E_n(t) \leqslant \lambda_0 \exp \left(\int_0^{t} (\lambda_0+\|\tilde{z}_n(t')\|_{W^{s_p,q}}) \, \mathrm{d}t'\right) \leqslant \lambda _0 \exp \left( \lambda_0 t + 4C\lambda_0\sqrt{qt}\right).\] 
In order to prove \eqref{1conditionTemps} it suffices to exhibit an $\alpha >0$ independent of $q$ such that $t:=\alpha q \in A_q$. A sufficient condition for such an $\alpha$ to exist is to satisfy for every $q \geqslant q_0$ :
\[\lambda_0 \exp(\lambda_0q(\alpha + 4C\sqrt{\alpha})) \leqslant \lambda_0^{\frac{q}{\beta_p}}\,.\] Assume that $\lambda_0 >1$ and $q_0$ large enough to satisfy $\frac{q_0}{\beta_p}-1 >0$. Then it is sufficient for $\alpha$ to satisfy \[\alpha + 4C \sqrt{\alpha} \leqslant \left(\frac{q_0}{\beta_p}-1\right) \frac{\log \lambda_0}{\lambda_0}\] which is satisfied for small $\alpha$ only depending on $C,\lambda_0,q_0,p$. 

Now set $q_0$ larger if needed to ensure $\sum_{q \geqslant q_0} 2^{-q}\leqslant \frac{\eta}{2}$ and thus set $\tilde{\Omega} := \bigcap_{q \geqslant q_0} \Omega_q$, constructed in order to satisfy $\mathbb{P}(\tilde{\Omega}) \geqslant 1-\eta$. Pick $\omega \in \tilde{\Omega}$ and $t \in (0,T)$. 
If $t \leqslant \alpha q_0$ one has $\sup_{n \geqslant 1}  \sup_{t \in (0,T)} E_n(t) \leqslant \lambda_0^{\frac{q_0}{\beta_p}}$, otherwise select a dyadic integer $2^N$ such that $2^N \alpha \leqslant t \leqslant 2^{N+1}\alpha$. Then as $\omega \in \Omega_{2^{N+1}}$ we observe that 
\[\sup_{n \geqslant 1} \sup_{t \in (0,T)} E_n(t) \leqslant \lambda _0^{\frac{2^{N+1}\alpha}{\beta _p}}\,.\] 
Since $t\leqslant T$ and $N \simeq \log t$ we can write it as 
\[\sup_n \sup_{t \in (0,T)} E_n(t) \leqslant C(T,\eta,\|(u_0,u_1)\|_{\mathcal{H}^s})\,,\] 
just as needed.
\end{proof}

\begin{proof}[Sketch of the proof of Corollary~\ref{1coroth2}] Let us explain how one can define an invariant set of full $\mu$-measure, where $\mu \in \mathcal{M}^s$ for $s=\frac{p-3}{p-1}$. We introduce \[\Theta _k ^q := \{(u_0,u_1) \in \mathcal{H}^s, \|\tilde{S}(t)(u_0,u_1)\|_{L^2_TW^{s_p,q}} \leqslant 2Ck\sqrt{q}\} \cap \{ a(S(t)(u_0,u_1)), b(S(t)(u_0,u_1)) < \infty\}\] 
and \[\Sigma := \bigcap_{k \geqslant 1} \bigcap _{q \geqslant q_0} \Theta _k^q \,.\] Then $\Sigma + \mathcal{H}^1$ is of full measure and invariant by the flow. 

For details see the end Section~5 in~\cite{burqTzvetkov}.  
\end{proof}

\begin{remark}\label{1remImpossible} The threshold $s_p=\frac{p-3}{p-1}$ appears to be out of reach when $p\geqslant 5$. Let us explain, at least informally, why a uniform bound on the energy $E$ such as in Proposition~\ref{1propProbabilistic} is out of reach when using the techniques presented here. We start with~\eqref{1eqEnergy} obtained in the introduction:
\[E(t) \simeq  \int_0^t \int_{\mathbb{R}^3} \nabla ^{1-s_p}v(t') v(t')^{p-1} \nabla^{s_p} z(t') \, \mathrm{d}x \, \mathrm{d}t'\,.\]
In order to conclude we would like a bound of the form \[E(t) \lesssim c(q)\int_0^tE(t')^{1+\frac{\beta}{q}}(t')\,\mathrm{d}t'\,,\]
for all large enough $q>1$. Since $(u_0,u_1) \in \mathcal{H}^{s_p}$ and because we did not assume more regularity, and recalling that any $L^{\infty}_T$ or $L^{\infty}_x$ estimates costs derivatives (see Section~\ref{1estimeesProba}), we should use the Hölder inequality putting $\nabla^{s_p}$ in some $L^q$ space instead of $L^{\infty}$. Therefore we have to estimate $\|v^{p-1}\nabla^{1-s_p}v\|_{L^{q'}}$ instead of its $L^1$ norm. We use the Hölder inequality to write:
\[\|v^{p-1}\nabla^{1-s_p}v\|_{L^{q'}} \leqslant \|v^{p-1}\|_{L^{\gamma_1}}\|\nabla ^{1-s_p}v\|_{L^{\gamma_2}}\,,\]
where $\frac{1}{q'}=\frac{1}{\gamma_1}+\frac{1}{\gamma_2}$. 

At this stage, observe that when we estimated the $L^1$ norm (thus $q'=1$) we applied Hölder's inequality with $\gamma_1= \frac{p+1}{p-1}$ and $\gamma_2= \frac{p+1}{2}$. Then we observed the following (schematically):
\begin{enumerate}[label=(\textit{\roman*})]
    \item That $v^{p-1} \in L^{\frac{p+1}{p-1}}$ gives $\|v\|_{L^{p+1}}^{p-1}$, controllable by a power of the energy. 
    \item That $\nabla^{1-s_p} v \in L^{\frac{p+1}{2}}$ may be controlled by the Gagliardo-Nirenberg theorem (which leads to control by a power of the energy), which writes
    \[\|\nabla^{1-s_p} v\|_{L^{\frac{p+1}{2}}} \lesssim \|\nabla v\|_{L^2}^{1-\alpha}\|v\|^{\alpha}_{L^{p+1}}\,,\]
    provided $\alpha \in [0,s_p]$ satisfies 
    \[\frac{2}{p+1}-\frac{1-s_p}{3} = \frac{1-\alpha}{6}+\frac{\alpha}{p+1}\,.\]
    It turned out that $\alpha=s_p$. 
\end{enumerate}

Now observe that when $1$ is increased to $q'$, either $\gamma_1$ or $\gamma_2$ has to increase, in order to match the Hölder indices requirement $\frac{1}{q'}=\frac{1}{\gamma_1}+\frac{1}{\gamma_2}$. Unfortunately, none of these modification lead to a control by the energy. More precisely:
\begin{enumerate}[label=(\textit{\roman*})]
    \item Increasing $\gamma_1$ amounts to estimating $\|v\|_{L^{p+1+\varepsilon}}$ for some $\varepsilon >0$. Since $p\geqslant 5$, we have $p+1+\varepsilon >6$ and then the $L^{p+1+\varepsilon}$ norm is supercritical with respect to the energy, therefore not controllable by a power of the energy. 
    \item Increasing $\gamma_2$ amounts to lowering $\frac{1-\alpha}{6}+\frac{\alpha}{p+1}$, but since $p+1\geqslant 6$ this boils down to increasing $\alpha$, which is not possible since $\alpha$ was already maximal. 
\end{enumerate}
These remarks explain why the Yudovich-Wolibner argument alone fails to obtain the endpoint $s=\frac{p-3}{p-1}$ when $p\geqslant 5$. 
\end{remark}

\appendix

\section{Littlewood-Paley theory and Besov spaces} \label{1appendixA}
This appendix gathers some results dealing with harmonic analysis, analysis in Besov spaces and product laws. A comprehensive treatment of that matter, is given in \cite{bahouriCheminDanchin} and \cite{taylor}. 

We start with a Bernstein-type lemma. For a proof see Lemma 2.1 in \cite{bahouriCheminDanchin}.   

\begin{theorem}[Bernstein-type lemma]\label{1bernstein} Let $(p,q) \in (1,\infty)^2$ with $p \leqslant q$ and $u \in L^p( \mathbb{R}^d)$. Let $B$ be a ball centered on $0$ and $C$ be an annulus, $k \geqslant 0$ an integer. 
\begin{enumerate}[label=(\textit{\roman*})]
    \item If $\hat u$ is supported in $\lambda B$ then $\|\nabla^k u\|_{L^q} \lesssim_k \lambda^{k+d\left(\frac{1}{p}-\frac{1}{q}\right)}\|u\|_{L^p}$.
    \item If $\hat u$ is supported in $\lambda C$ then $\|\nabla^k u\|_{L^p} \simeq_k \lambda^k \|u\|_{L^p}$
    \item The statements (\textit{i}) and (\textit{ii}) are true for non-integer orders of derivation.
\end{enumerate}
\end{theorem}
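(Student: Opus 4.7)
The strategy is to reduce everything to the fact that a Fourier multiplier with smooth compactly supported symbol acts as convolution with a Schwartz kernel, whose $L^r$ norm we can compute by scaling.

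First I would prove (\textit{i}). Choose $\chi \in \mathcal{D}(\mathbb{R}^d)$ with $\chi \equiv 1$ on $B$ and $\operatorname{supp} \chi \subset 2B$, and set $K:=\check\chi \in \mathcal{S}(\mathbb{R}^d)$. Since $\operatorname{supp} \hat u \subset \lambda B$, we have $\hat u = \chi(\cdot/\lambda)\hat u$, hence
\[
u \;=\; K_\lambda * u, \qquad K_\lambda(x) \;:=\; \lambda^d K(\lambda x).
\]
Differentiating, $\nabla^k u = (\nabla^k K_\lambda)*u$. Pick $r \in [1,\infty]$ by the Young exponent relation $1 + \tfrac{1}{q} = \tfrac{1}{r}+\tfrac{1}{p}$ (valid since $p \leqslant q$). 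Young's convolution inequality yields
\[
\|\nabla^k u\|_{L^q} \;\leqslant\; \|\nabla^k K_\lambda\|_{L^r}\|u\|_{L^p}.
\]
A direct rescaling gives $\nabla^k K_\lambda(x)=\lambda^{d+k}(\nabla^k K)(\lambda x)$, so
\[
\|\nabla^k K_\lambda\|_{L^r} \;=\; \lambda^{k + d(1-1/r)}\|\nabla^k K\|_{L^r} \;\lesssim_k\; \lambda^{k+d(1/p - 1/q)},
\]
which is exactly the claimed estimate.

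For (\textit{ii}), the upper bound $\|\nabla^k u\|_{L^p} \lesssim \lambda^k \|u\|_{L^p}$ follows from (\textit{i}) by choosing a bump $\tilde\chi$ that equals $1$ on an annular neighborhood of $C$. The lower bound is the nontrivial point: since $|\xi|$ is bounded away from $0$ on $C$, pick $\psi \in \mathcal{D}(\mathbb{R}^d \setminus\{0\})$ with $\psi \equiv 1$ on $C$, and set $m(\eta):=\psi(\eta)|\eta|^{-2k}$, which is smooth with compact support. Then from $\hat u = \psi(\cdot/\lambda)\hat u$ one obtains the identity
\[
u \;=\; \lambda^{-2k}\, m(D/\lambda)\bigl((-\Delta)^k u\bigr),
\]
and $m(D/\lambda)$ has convolution kernel $\lambda^d \check m(\lambda\cdot)$, whose $L^1$ norm equals $\|\check m\|_{L^1}<\infty$ because $\check m \in \mathcal{S}$. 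Young's inequality gives $\|u\|_{L^p} \lesssim \lambda^{-2k}\|(-\Delta)^k u\|_{L^p} \lesssim \lambda^{-k}\|\nabla^k u\|_{L^p}$ after using (\textit{i}) to pass from $(-\Delta)^k u$ to $\nabla^k u$ (or, for $k$ odd, by applying the same trick to each partial derivative of order $k$).

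Finally, (\textit{iii}) is obtained by repeating the scheme with $\nabla^k$ replaced by $|\nabla|^k:=(-\Delta)^{k/2}$: in (\textit{i}) the multiplier $|\xi|^k\chi(\xi/\lambda) = \lambda^k g(\xi/\lambda)$ with $g(\eta)=|\eta|^k\chi(\eta)$ still has $\check g \in L^1$ (it is a tempered distribution of compact Fourier support, smooth away from $0$, with enough decay coming from the cutoff), and in (\textit{ii}) we just replace $|\eta|^{-2k}$ by $|\eta|^{-k}$ in the definition of $m$, which stays smooth and compactly supported. The only point requiring care is the near-origin behavior of $|\eta|^k$ for non-integer $k$ in part (\textit{i}); this is handled either by splitting $B = \{|\eta|\leqslant \tfrac{1}{2}\} \cup \{\tfrac{1}{2}\leqslant |\eta|\leqslant 1\}$ and treating the first piece through a dyadic Littlewood-Paley expansion, or simply by stating (\textit{iii}) with $\langle \nabla\rangle^k$ in place of $|\nabla|^k$, for which the symbol is smooth on all of $\mathbb{R}^d$. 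The main obstacle is therefore purely bookkeeping in this last step; everything else reduces to Young's inequality applied to a fixed Schwartz kernel rescaled by $\lambda$.
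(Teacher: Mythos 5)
The paper does not give its own proof of this lemma; it cites Lemma 2.1 of Bahouri--Chemin--Danchin, and your argument follows exactly that textbook scheme (cutoff multiplier, rescaled Schwartz kernel, Young's inequality). Parts (\textit{i}) and (\textit{ii}) are correct. For the lower bound in (\textit{ii}), your detour through $(-\Delta)^k$ works, but note it costs you an extra application of (\textit{i}) to downgrade from order-$2k$ to order-$k$ derivatives; the more direct route, which BCD use, is the multinomial identity $|\xi|^{2k}=\sum_{|\alpha|=k}\frac{k!}{\alpha!}\xi^{2\alpha}$, which lets you write $u=\lambda^{-k}\sum_{|\alpha|=k}g_\alpha(D/\lambda)\partial^\alpha u$ with each $g_\alpha(\eta)=\frac{k!}{\alpha!}\frac{(-i\eta)^\alpha}{|\eta|^{2k}}\psi(\eta)\in\mathcal{D}$ and conclude in one step. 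Also, the ``$k$ odd vs.\ even'' remark is a red herring: the same argument applies verbatim for all integers $k\geqslant 1$.

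Your treatment of (\textit{iii}) is where a small clean-up is needed. The second suggested workaround, replacing $|\nabla|^\sigma$ by $\langle\nabla\rangle^\sigma$, does not prove the stated claim: $\langle\xi\rangle^\sigma$ is not homogeneous, so the rescaling $\lambda^\sigma g(\xi/\lambda)$ breaks down, and the resulting bound is only comparable to $\lambda^\sigma$ when $\lambda\gtrsim 1$ (for $\lambda\to 0$ the left side does not decay at all). So drop that option and keep the first one. In fact the direct route also works and is worth spelling out: for $\sigma>0$ not an even integer, $g(\eta)=|\eta|^\sigma\chi(\eta)$ is $\mathcal{C}^\infty$ away from $0$ and compactly supported, and a standard dyadic-decomposition (or integration-by-parts) estimate shows $|\check g(x)|\lesssim\langle x\rangle^{-d-\sigma}$, hence $\check g\in L^r$ for every $r\in[1,\infty]$; then the exact Young-plus-rescaling argument from (\textit{i}) goes through unchanged. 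Alternatively, the Littlewood--Paley splitting you sketch is fine: apply the annulus estimate for $|\nabla|^\sigma$ on each dyadic shell $2^j\mathcal{C}$ with $2^j\lesssim\lambda$ and the usual ball estimate for the $L^p\to L^q$ gain, then sum the geometric series $\sum_{2^j\lesssim\lambda}2^{j(\sigma+d(1/p-1/q))}\lesssim\lambda^{\sigma+d(1/p-1/q)}$, which converges precisely because $\sigma>0$ and $p\leqslant q$.
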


The proof of Proposition~\ref{1propStrichartzBesov} requires the following consequence of Theorem~\ref{1bernstein}. 

\begin{corollary}\label{1coroBernstein} Let $q \geqslant p > 1$. Let $\psi : \mathbb{R}^d \to \mathbb{R}$ be a smooth function, supported in $[-1,1]^d$. Let $n \in \mathbb{Z}^d$ and $\psi(D-n)$ denote the Fourier multiplier of symbol $\psi(\xi -n)$. Then there holds
\[\|\psi(D-n)u\|_{L^q} \lesssim \|\psi(D -n)u\|_{L^p}\,,\]
for any $u \in L^p(\mathbb{R}^d)$. The implicit constant does not depend on $n$ nor $u$.  
\end{corollary}

\begin{proof} Let $u \in L^p(\mathbb{R}^d)$ and $q \geqslant p >1$. We remark that the support of $\widehat{\psi (D-n)u}$ is contained in $n+[-1,1]^d$. We remark that therefore the support of the Fourier transform of $e^{-in\cdot x} \psi(D-n)u$ is contained in $[-1,1]$ and we can use the Bernstein estimate, Theorem~\ref{1bernstein}~(\textit{i}) with $k=0$ and $\lambda=1$, which writes
\[\|e^{in\cdot x} \psi(D-n)u\|_{L^q} \lesssim \|e^{in\cdot x}\psi(D-n)u\|_{L^p}\,,\]
with implicit constant not depending on $n$ nor $u$. Since the multiplication by $e^{in\cdot x}$ does not affect the $L^p$ norms, the lemma is proved. 
\end{proof}

We now recall the definition of Besov spaces:

\begin{definition} A function $f$ belongs to the nonhomogeneous Besov space $B^{s}_{p,r}$ if, and only if \[\|f\|_{B^s_{p,r}} := \left( \sum_{j \geqslant 0} 2^{jsr} \|\Delta_j f\|_{L^p}^r \right)^{1/r} < \infty.\] Similarly a function $f$ belongs to the homogeneous Besov space $\dot{B}^s_{p,r}$ if, and only if \[\|f\|_{\dot{B}^s_{p,r}} := \left( \sum_{j \in \mathbb{Z}} 2^{jsr} \|\dot{\Delta}_j f\|_{L^p}^r \right)^{1/r} < \infty.\]
\end{definition}

In this text we use the two following reconstruction lemmata, that illustrate the fact that in order to study the $H^s$ norm of a function $f$ it is sufficient to study the frequency localizations $\Delta _j f$. 

\begin{lemma}[\cite{bahouriCheminDanchin}] For $s \in \mathbb{R}$ and $f \in H^s(\mathbb{R}^d)$ one has $\|f\|_{B^s_{2,2}} \sim \|f\|_{H^s}$.
\end{lemma}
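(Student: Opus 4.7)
The plan is to translate both norms through Plancherel's theorem into weighted $L^2$-norms of $\hat f$ and then to show that the two Fourier multipliers in play are pointwise comparable.

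First I would write the Besov norm side using that the Littlewood--Paley projectors $\Delta_j$ are Fourier multipliers: setting $\phi_{-1} := \chi$ and $\phi_j := \varphi(2^{-j}\cdot)$ for $j \geqslant 0$, Plancherel's theorem gives
\[
\|f\|_{B^s_{2,2}}^2 \;=\; \sum_{j \geqslant -1} 2^{2js}\|\Delta_j f\|_{L^2}^2 \;=\; \int_{\mathbb{R}^d} \Bigl(\sum_{j \geqslant -1} 2^{2js}|\phi_j(\xi)|^2\Bigr)|\hat f(\xi)|^2 \,\mathrm{d}\xi,
\]
while by definition
\[
\|f\|_{H^s}^2 \;=\; \int_{\mathbb{R}^d} \langle \xi\rangle^{2s}|\hat f(\xi)|^2\,\mathrm{d}\xi.
\]
The whole matter is therefore to prove the pointwise equivalence
\[
m(\xi) := \sum_{j \geqslant -1} 2^{2js} |\phi_j(\xi)|^2 \;\sim\; \langle \xi \rangle^{2s}, \qquad \xi \in \mathbb{R}^d.
\]

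For this I would split according to the size of $\xi$. For $\xi$ in a bounded region, say $|\xi|\leqslant 2$, only $\chi$ and finitely many $\varphi(2^{-j}\cdot)$ (for $j$ bounded) are nonzero, so $m(\xi)$ is bounded above and below by positive constants, and the same holds for $\langle \xi\rangle^{2s}$. For $|\xi| \geqslant 2$, the support property of the dyadic partition of unity ensures that the sum defining $m(\xi)$ reduces to a universally bounded number of terms, and each surviving index $j$ satisfies $2^j \sim |\xi|$, hence $2^{2js} \sim |\xi|^{2s} \sim \langle \xi\rangle^{2s}$. The lower bound uses in addition that $\sum_{j \geqslant -1} \phi_j(\xi) = 1$, so that for each $\xi$ at least one $\phi_j(\xi)$ is bounded below by a positive universal constant; combined with the local finiteness of the sum, this yields $m(\xi) \gtrsim \langle \xi\rangle^{2s}$.

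Combining this pointwise equivalence with the two Plancherel identities above gives $\|f\|_{B^s_{2,2}}^2 \sim \|f\|_{H^s}^2$, which is the claim. I do not foresee any real obstacle: the only mildly delicate point is the lower bound on $m(\xi)$, which forces one to use the partition-of-unity identity rather than working only from support considerations.
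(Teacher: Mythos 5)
Your argument is correct. The paper itself gives no proof of this lemma (it is stated with a citation to Bahouri--Chemin--Danchin), and the approach you take --- pass to the Fourier side via Plancherel, reduce the claim to the pointwise equivalence $\sum_{j\geqslant -1} 2^{2js}|\phi_j(\xi)|^2 \sim \langle\xi\rangle^{2s}$, and prove it by combining the almost-orthogonality of the supports (local finiteness of the sum, $2^j \sim |\xi|$ on the support for $j\geqslant 0$) with the partition-of-unity identity for the lower bound --- is precisely the standard proof found in that reference. One small point worth making explicit: the lower bound works uniformly in the sign of $s$ because on each contributing annulus $2^{2js}\sim |\xi|^{2s}$ whether $s$ is positive or negative; you handle this correctly but it deserves a half-sentence. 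Also note that the paper's displayed definition of the Besov norm sums over $j\geqslant 0$, whereas you (correctly, and consistently with the paper's own definition of $\Delta_{-1}=\chi(D)$) sum over $j\geqslant -1$; the paper's display is evidently a typo, as the sum over $j\geqslant 0$ alone would not control low frequencies.
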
 

\begin{lemma}[Besov reconstruction, \cite{bahouriCheminDanchin}]\label{1lemmaReconstruction} Let $s>0$ and $p,r \in [1,\infty]$. Let $(u_j)_{j \geqslant 0}$ be a sequence of smooth functions which satisfy \[\left( \sup_{|\alpha| \leqslant \lfloor s\rfloor +1} 2^{j(s-|\alpha|)} \|\partial ^{\alpha} u_j\|_{L^p}\right)_{j \geqslant 0} \in \ell ^r,\] then one has $\displaystyle u = \sum_{j \geqslant 0}  u_j \in B^s_{p,r}$ and the estimate: \[\|u\|_{B^s_{p,r}} \lesssim \left \| \left( \sup_{|\alpha| \leqslant \lfloor s\rfloor +1} 2^{j(s-|\alpha|)} \|\partial ^{\alpha} u_j\|_{L^p}\right)_{j \geqslant 0} \right \|_{\ell ^r},\] whith implicit constant only depending on $s$. The same result holds for homogeneous Besov spaces $\dot B^{s}_{p,r}$ and the $\ell ^r$ norm taken with indices running in $\mathbb{Z}$.
\end{lemma}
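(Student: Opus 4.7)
My plan is to prove the reconstruction lemma by a standard weighted convolution estimate on the Littlewood--Paley blocks of the candidate sum. First I would set $N := \lfloor s \rfloor$ and
\[
c_j := \sup_{|\alpha| \leqslant N+1} 2^{j(s-|\alpha|)} \|\partial^\alpha u_j\|_{L^p},
\]
so that $(c_j) \in \ell^r$ by hypothesis. The case $|\alpha|=0$ alone gives $\|u_j\|_{L^p} \leqslant 2^{-js}c_j$, which, since $s > 0$, already ensures absolute convergence of $u := \sum_{j \geqslant 0} u_j$ in $L^p$ and hence in $\mathcal{S}'$, so that $u$ is well-defined.

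Next I would bound $\|\Delta_k u_j\|_{L^p}$ in two regimes. When $j \geqslant k$, I would simply use the uniform $L^p$-boundedness of $\Delta_k$ (convolution against an $L^1$-normalized kernel), giving $\|\Delta_k u_j\|_{L^p} \lesssim \|u_j\|_{L^p} \lesssim 2^{-js}c_j$. When $j < k$, I would transfer $N+1$ derivatives from $\Delta_k$ onto $u_j$ by factoring $\varphi(2^{-k}\xi) = 2^{-k(N+1)} \widetilde\varphi(2^{-k}\xi)\, (i\xi)^{N+1}$ for some $\widetilde\varphi \in \mathcal{D}(\mathcal{C})$, so that $\Delta_k = 2^{-k(N+1)}\, \widetilde\varphi(2^{-k}D)\, \nabla^{N+1}$ with $\widetilde\varphi(2^{-k}D)$ bounded on $L^p$ uniformly in $k$. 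This should yield
\[
\|\Delta_k u_j\|_{L^p} \lesssim 2^{-k(N+1)} \|\nabla^{N+1} u_j\|_{L^p} \lesssim 2^{-k(N+1) + j(N+1-s)}\, c_j.
\]

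Multiplying by $2^{ks}$ and summing over $j$ should then give $2^{ks}\|\Delta_k u\|_{L^p} \leqslant \sum_{j \geqslant 0} a_{k-j}\, c_j$, where $a_m := 2^{ms}$ for $m \leqslant 0$ and $a_m := 2^{m(s-N-1)}$ for $m > 0$. Both branches decay geometrically because $s > 0$ and $N+1 > s$, so $(a_m)_{m \in \mathbb{Z}} \in \ell^1$ with norm depending only on $s$. Young's convolution inequality $\|a \ast c\|_{\ell^r} \leqslant \|a\|_{\ell^1}\|c\|_{\ell^r}$ then delivers the desired $\|(2^{ks}\|\Delta_k u\|_{L^p})_{k \geqslant 0}\|_{\ell^r} \lesssim \|(c_j)\|_{\ell^r}$.

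For the homogeneous version the same block estimates should carry over verbatim with $\dot\Delta_k$ in place of $\Delta_k$ and indices running over $\mathbb{Z}$. The only subtlety is that absolute convergence of $\sum_j u_j$ in $L^p$ typically fails as $j \to -\infty$, so the reconstruction has to be interpreted modulo polynomials of degree less than $s$; I would handle this along the lines of Lemma~2.62 in \cite{bahouriCheminDanchin}. The main technical point will be the frequency-gain step itself: the hypothesis on all derivatives up to order $N+1$ is exactly what substitutes for genuine Fourier localization of $u_j$, and is what allows the high-frequency cutoff $\Delta_k$ (with $k > j$) to pay $N+1$ powers of $2^{-k}$ against the $N+1$ powers of $2^j$ generated by the derivatives on $u_j$; getting the $\widetilde\varphi$ factorization and its uniform $L^p$-boundedness right is where one must be careful.
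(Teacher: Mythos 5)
Your proof is correct and follows the standard Bahouri--Chemin--Danchin argument (Lemma~2.84 in their book), which the paper simply cites without reproducing. The two-regime bound on $\|\Delta_k u_j\|_{L^p}$ — the direct $L^p$ estimate for $j\geqslant k$, and the gain of $2^{-k(N+1)}$ by transferring $N{+}1$ derivatives onto $u_j$ for $j<k$ — followed by Young's convolution inequality in $\ell^r$ is exactly the canonical route; the only point to make rigorous is to replace the scalar shorthand $(i\xi)^{N+1}$ by the finite sum $\varphi(\xi)=\sum_{|\beta|=N+1}\theta_\beta(\xi)\,\xi^\beta$ with $\theta_\beta\in\mathcal{D}(\mathcal{C})$, which is available precisely because $\varphi$ is supported away from the origin.
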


Next we recall some embedding theorems: 

\begin{theorem}[Sobolev-Besov embeddings, \cite{bahouriCheminDanchin}] \label{1sobolev} Let $1 \leqslant p_1 \leqslant p_2 \leqslant \infty$ and $1 \leqslant r_1 \leqslant r_2 \leqslant \infty$, $p,q \in [1,\infty]$ and $s,s_1,s_2 \in \mathbb{R}$. Then:
\begin{enumerate}[label=(\textit{\roman*})]
    \item The embedding $\displaystyle B^{s}_{p_1,r_1} \hookrightarrow B^{s-d\left(\frac{1}{p_1}-\frac{1}{p_2}\right)}_{p_2,r_2}$ is continuous.
    \item For $1 \leqslant p \leqslant \infty$ the embeddings $B^s_{p,1} \hookrightarrow W^{s,p} \hookrightarrow B^s_{p,\infty}$ are continuous. The homogeneous counterpart is also true.  
    \item The embedding $W^{s_1,p} \hookrightarrow W^{s_2,q}$ is continuous as soon as $\frac{1}{p}-\frac{s_1}{d} \leqslant\frac{1}{q} - \frac{s_2}{d}$.
    \item The embedding (\textit{iii}) is locally compact whenever the inequality is strict.
    \item Let $r_1>r_2$, $s<0$, $p \in [1,\infty]$. Then for arbitrarly small $\varepsilon >0$, the embedding $B_{p,r_1}^{s+\varepsilon} \hookrightarrow B_{p,r_2}^s$ is continuous. 
\end{enumerate}
\end{theorem}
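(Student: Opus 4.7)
The plan is to deduce every item from the Bernstein inequality of Theorem~\ref{1bernstein} together with standard Mikhlin-H\"ormander multiplier estimates, treating each assertion as bookkeeping in Littlewood-Paley theory.

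For (\textit{i}), I would apply Bernstein to each dyadic block: since $\Delta_j u$ has Fourier support in an annulus of size $\sim 2^j$, one has $\|\Delta_j u\|_{L^{p_2}}\lesssim 2^{jd(1/p_1-1/p_2)}\|\Delta_j u\|_{L^{p_1}}$. Multiplying by $2^{j(s-d(1/p_1-1/p_2))}$ converts this into the Besov-indexed inequality, and the inclusion $\ell^{r_1}\hookrightarrow\ell^{r_2}$ (valid since $r_1\leqslant r_2$) completes the argument. For (\textit{v}), I would split $2^{js}=2^{-j\varepsilon}\cdot 2^{j(s+\varepsilon)}$ and apply H\"older in $\ell^r$ with exponents chosen so that $\tfrac{1}{r_2}=\tfrac{1}{r}+\tfrac{1}{r_1}$; the resulting geometric series $\sum_{j\geqslant 0} 2^{-j\varepsilon r}$ is summable as soon as $\varepsilon>0$.

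The two embeddings in (\textit{ii}) reduce to the bound $\|\Delta_j\langle\nabla\rangle^{\pm s}\|_{L^p\to L^p}\lesssim 2^{\pm js}$, which follows from Mikhlin-H\"ormander applied to the rescaled symbol $\varphi(\xi)\langle 2^j\xi\rangle^{\pm s}2^{\mp js}$ (its derivatives being bounded uniformly in $j\geqslant 0$). The embedding $B^s_{p,1}\hookrightarrow W^{s,p}$ then follows by writing $\langle\nabla\rangle^s u=\sum_j\langle\nabla\rangle^s\Delta_j u$ and applying the triangle inequality in $L^p$; the reverse $W^{s,p}\hookrightarrow B^s_{p,\infty}$ comes from $\|\Delta_j u\|_{L^p}=\|\Delta_j\langle\nabla\rangle^{-s}\langle\nabla\rangle^s u\|_{L^p}\lesssim 2^{-js}\|u\|_{W^{s,p}}$.

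For (\textit{iii}), the cleanest route uses the Triebel-Lizorkin identification $W^{s,p}\simeq F^s_{p,2}$, valid for $p\in(1,\infty)$ as a consequence of the Fefferman-Stein vector-valued maximal inequality. Combining this characterization with Bernstein inside a square function then gives the Sobolev embedding $F^{s_1}_{p,2}\hookrightarrow F^{s_2}_{q,2}$ under the scaling condition, and (\textit{iv}) follows from the Fr\'echet-Kolmogorov criterion: the strict inequality yields a summable high-frequency tail, reducing local compactness to Rellich-Kondrakov on a ball of bounded frequency. I expect (\textit{iii}) to be the main obstacle, since unlike the other items it genuinely requires the Fefferman-Stein maximal function machinery (or, equivalently, Hardy-Littlewood-Sobolev together with complex interpolation) rather than the direct Littlewood-Paley bookkeeping that suffices elsewhere.
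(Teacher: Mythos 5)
The paper itself offers no proof of this theorem: it is quoted with a citation to Bahouri--Chemin--Danchin, so there is no written-out argument to compare your proposal against. That said, your sketch is essentially the standard Littlewood--Paley proof of each item and is correct in its main lines. Item (\textit{i}) is exactly Bernstein plus the inclusion $\ell^{r_1}\hookrightarrow\ell^{r_2}$. Item (\textit{v}) follows from absorbing the factor $2^{-j\varepsilon}$; your H\"older splitting works, though it is slightly more than needed, since $B^{s+\varepsilon}_{p,r_1}\hookrightarrow B^{s+\varepsilon}_{p,\infty}$ together with the convergent geometric series $\sum_{j\geqslant 0}2^{-j\varepsilon r_2}$ already gives the bound directly. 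Items (\textit{iii}) and (\textit{iv}) via $W^{s,p}\simeq F^{s}_{p,2}$, Bernstein in the square function, and Rellich--Kondrakov are the standard route.

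The one place where you should be more careful is item (\textit{ii}). You invoke Mikhlin--H\"ormander to prove $\bigl\|\Delta_j\langle\nabla\rangle^{\pm s}\bigr\|_{L^p\to L^p}\lesssim 2^{\pm js}$, but Mikhlin--H\"ormander only yields $L^p$ bounds for $1<p<\infty$, whereas the statement allows $p\in[1,\infty]$. Since the rescaled symbol $\xi\mapsto\varphi(\xi)\langle 2^j\xi\rangle^{\pm s}\,2^{\mp js}$ is compactly supported and smooth with derivatives bounded uniformly in $j\geqslant 0$, its inverse Fourier transform is Schwartz with an $L^1$ norm bounded uniformly in $j$; Young's convolution inequality then gives the $L^p\to L^p$ bound for all $p\in[1,\infty]$, which is both simpler and removes the endpoint restriction. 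The same caveat attaches to the identification $W^{s,p}\simeq F^s_{p,2}$ you use in (\textit{iii}), which again requires $1<p<\infty$; this restriction is harmless for how the theorem is used in the paper, but worth flagging since the statement nominally allows $p,q\in[1,\infty]$.
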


The next theorem is a well-known interpolation inequality.

\begin{theorem}[Gagliardo-Nirenberg]\label{1gagliardoNirenberg} Let $p_0,p_1  \in (1,\infty)$ and $s,t >0$. Let $p>1$ and $\alpha \in (0,1)$ satisfying 
\[\displaystyle -\frac{s}{d}+\frac{1}{p}=(1-\alpha)\left(\frac{1}{p_0} -\frac{t}{d}\right) + \frac{\alpha}{p_1} \text{ and } s\leqslant (1-\alpha)t\,.\]
Then for $u \in W^{t,p_0}(\mathbb{R}^d) \cap L^{p_1}(\mathbb{R}^d)$ one has \[ \| u\|_{\dot{W}^{s,p}} \lesssim \|u\|^{1-\alpha}_{\dot{W}^{t,p_0}} \|u\|_{L^{p_1}}^{\alpha}\,,\]
and the same result holds in nonhomogeneous Sobolev spaces.
\end{theorem}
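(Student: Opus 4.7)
The plan is to prove this via a Littlewood-Paley dyadic decomposition, in the spirit of \cite{bahouriCheminDanchin}. The core idea is that, for each dyadic block $\dot\Delta_j u$, two competing Bernstein-type estimates are available: one controlling $\dot\Delta_j u$ via $\|u\|_{\dot W^{t,p_0}}$ (sharp at high frequencies) and one via $\|u\|_{L^{p_1}}$ (sharp at low frequencies). Their geometric mean with weights $1-\alpha$ and $\alpha$, together with the scaling condition, will control the $\dot B^s_{p,\infty}$ norm, and the slack afforded by $s \leq (1-\alpha)t$ will allow promoting this to a $W^{s,p}$ bound.

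First, I would establish the two dyadic bounds. Using that $\dot\Delta_j u$ has Fourier transform supported on $\{|\xi| \sim 2^j\}$, Theorem~\ref{1bernstein} combined with the boundedness of the Riesz-type operator $|\nabla|^{-t}$ on dyadic blocks yields, assuming $p \geq \max(p_0,p_1)$ (remaining cases follow by Theorem~\ref{1sobolev}(iii)),
\begin{align*}
\|\dot\Delta_j u\|_{L^p} &\lesssim 2^{j[d(1/p_0 - 1/p) - t]}\|u\|_{\dot W^{t,p_0}},\\
\|\dot\Delta_j u\|_{L^p} &\lesssim 2^{jd(1/p_1 - 1/p)}\|u\|_{L^{p_1}}.
\end{align*}
Interpolating these with weights $1-\alpha$ and $\alpha$, a direct computation shows that the $2^{js}$-weighted $j$-exponent vanishes exactly under the scaling hypothesis; hence $\|u\|_{\dot B^s_{p,\infty}} \lesssim \|u\|_{\dot W^{t,p_0}}^{1-\alpha}\|u\|_{L^{p_1}}^\alpha$ uniformly in $j$.

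Next, I would upgrade from Besov to Sobolev. The hypothesis $s \leq (1-\alpha)t$ is exactly what is needed here: in the generic case it forces the two dyadic exponents above to be of strictly opposite signs, so that choosing the first bound for $j \geq j_0$ and the second for $j < j_0$ produces geometric series that sum. This yields $\|u\|_{\dot B^s_{p,1}} \lesssim \|u\|_{\dot W^{t,p_0}}^{1-\alpha}\|u\|_{L^{p_1}}^\alpha$, and the embedding $\dot B^s_{p,1} \hookrightarrow \dot W^{s,p}$ from Theorem~\ref{1sobolev}(ii) transfers the estimate to the Sobolev scale. Finally, the $L^p$ part of the non-homogeneous norm (the low-frequency block) is handled via Young's inequality with a Schwartz convolution kernel, producing the required $\|u\|_{L^{p_1}}$ dependence and completing the passage from $\dot W^{s,p}$ to $W^{s,p}$.

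The main obstacle will be the endpoint case $s = (1-\alpha)t$, where one of the dyadic exponents vanishes and the geometric summation degenerates. In that borderline case the argument must be refined, either by invoking complex interpolation on the Sobolev scale directly (which gives the logarithmic convexity bound), or by observing that equality $s = (1-\alpha)t$ forces $1/p = (1-\alpha)/p_0 + \alpha/p_1$, an additional structural relation which removes the gain needed in the previous step and matches classical Gagliardo--Nirenberg. A secondary technical nuisance is verifying the relative ordering of $p,p_0,p_1$ required for Bernstein to apply in the intended direction; this will be handled by a short case analysis and an application of Theorem~\ref{1sobolev}(iii) to reduce to the case $p \geq \max(p_0,p_1)$.
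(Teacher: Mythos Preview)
Your direct dyadic approach differs from the paper's, which is shorter and more modular: the paper quotes the endpoint case $s=(1-\alpha)t$ (with $1/\bar p=(s/t)/p_0+(1-s/t)/p_1$) as Theorem~2.44 of \cite{bahouriCheminDanchin}, pairs it with the Sobolev embedding $\dot W^{t,p_0}\hookrightarrow \dot W^{s,\tilde p}$ at the other extreme $\alpha=0$, and then recovers all intermediate $\alpha$ by H\"older interpolation between $\dot W^{s,\bar p}$ and $\dot W^{s,\tilde p}$. No case analysis on the ordering of $p,p_0,p_1$ is needed.

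Your proposed reduction to $p\geq\max(p_0,p_1)$ via Theorem~\ref{1sobolev}(iii) is a genuine gap. Preserving the scaling relation and the parameters $\alpha,t,p_0,p_1$ while embedding $W^{s',p'}\hookrightarrow W^{s,p}$ forces $s'=s+d(1/p-1/p')$; pushing $p'$ up to $\max(p_0,p_1)$ can drive $s'$ strictly above $(1-\alpha)t$, so the reduced problem no longer satisfies the hypotheses. This is not academic: the paper's own application (Lemma~\ref{1lemmeTechnique}, with $p_0=2$, $p_1=p+1$, target exponent $(p+1)/2$) has target $<p_1$ and sits exactly at the endpoint $s=(1-\alpha)t$. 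Your identification of the role of $s\leq(1-\alpha)t$ is also off: from the scaling one computes $e_1=-\alpha\bigl[t-d(1/p_0-1/p_1)\bigr]$ and $e_2=(1-\alpha)\bigl[t-d(1/p_0-1/p_1)\bigr]$, so the signs of the two dyadic exponents are governed by $t$ versus $d(1/p_0-1/p_1)$, not by $s\leq(1-\alpha)t$; the latter's actual role is to guarantee $p\geq p_*$ with $1/p_*=(1-\alpha)/p_0+\alpha/p_1$. Your endpoint fix (block-level H\"older, since then $p=p_*$) is correct and essentially reconstructs the BCD result; to cover the full range the cleanest route is then the paper's, interpolating between that endpoint and the Sobolev embedding.
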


\begin{proof} The limit embedding is given by Theorem 2.44 in \cite{bahouriCheminDanchin}, that is: 
\[\|u\|_{\dot{W}^{s,\bar{p}}} \lesssim \|u\|_{\dot{W}^{t,p_0}}^{s/t} \|u\|^{1-s/t}_{L^{p_1}}\] where $\frac{1}{\bar{p}} = \frac{s/t}{p_0}+\frac{1-s/t}{p_1}$. On the other hand the Sobolev embedding reads $\|u\|_{\dot{W}^{s,\tilde{p}}} \lesssim \|u\|_{\dot{W}^{t,p_0}}$ where $\frac{1}{\tilde{p}}-\frac{s}{d}=\frac{1}{p_0}-\frac{t}{d}$. Let $p$ a real number such that there exists $\tilde{\theta} \in [0,1]$ satisfying $\frac{1}{p}=\frac{\tilde{\theta}}{\bar{p}}+\frac{1-\tilde{\theta}}{\tilde{p}}$. Then by interpolation and using the previous inequalities on have 
\[\|u\|_{\dot{W}^{s,p}} \leqslant \|u\|_{\dot{W}^{s,\bar{p}}}^{\tilde{\theta}}\|u\|_{\dot{W}^{s,\tilde{p}}}^{1-\tilde{\theta}} \lesssim \|u\|^{1-\theta}_{\dot{W}^{t,p_0}} \|u\|_{L^{p_1}}^{\theta}\] with $\theta = \left(1-\frac{s}{t}\right) \tilde{\theta}$. A straightforward computation yields 
\begin{equation}
\label{1eqExposants}
\frac{1}{p} - \frac{s}{d}=(1-\theta)\left(\frac{1}{p_0}-\frac{t}{d} \right) + \frac{\theta}{p_1}. 
\end{equation}
Note that the imposed condition $\theta \in \left[0,1-\frac{s}{t}\right]$ comes from the fact that $\tilde{\theta} \in [0,1]$. All the range of $p$ is covered since the equality \eqref{1eqExposants} implies $\frac{1}{p} \in \left[\frac{1}{\bar{p}},\frac{1}{\tilde{p}}\right]$.

The proof in the nonhomogeneous case follows using $\|u\|_{W^{s,q}} \leqslant \|u\|_{L^q} + \|u\|_{\dot{W}^{s,q}}$. 
\end{proof}

\begin{theorem}[Sobolev chain rule, \cite{taylor}]\label{1sobolevChain} For $s \in (0,1)$, $p \in (1,\infty)$ and a function $F\in \mathcal{C}^1(\mathbb{R})$ such that $F(0)=0$ and such that there is a $\mu \in L^1([0,1])$ such that for every $\theta \in [0,1]$: \[|F'(\theta v +(1-\theta)w)| \leqslant \mu(\theta) \left( G(v)+G(w) \right)\] where $G>0$. We have $\|F \circ u \|_{W^{s,p}} \lesssim \|u\|_{W^{s,p_0}}\|G \circ u\|_{L^{p_1}}$ as soon as $\frac{1}{p_0}+\frac{1}{p_1}=\frac{1}{p}$, provided $p_0 \in (1,\infty]$ and $p_1 \in (1,\infty)$.  
\end{theorem}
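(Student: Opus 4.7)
The plan is to mimic the Littlewood-Paley proof of Lemma~\ref{1chainRule}, transplanted to the Bessel potential space $W^{s,p}$ \textit{via} its Triebel-Lizorkin characterization
\[\|f\|_{W^{s,p}} \sim \|f\|_{L^p} + \left\| \left(\sum_{j \geqslant 0} 2^{2js} |\Delta_j f|^2\right)^{1/2} \right\|_{L^p},\]
valid for $s\in(0,1)$, $p\in(1,\infty)$; the $L^p$ contribution and the square-function contribution will be handled separately.

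For the $L^p$ contribution, the Newton-Leibniz formula combined with $F(0)=0$ yields $F\circ u = u\int_0^1 F'(\theta u)\,\mathrm{d}\theta$; the hypothesis applied at $(v,w)=(u,0)$ gives the pointwise bound $|F\circ u|\leqslant \|\mu\|_{L^1}(G\circ u+G(0))|u|$, and a Hölder inequality with $\frac{1}{p_0}+\frac{1}{p_1}=\frac{1}{p}$, combined with the continuous embedding $W^{s,p_0}\hookrightarrow L^{p_0}$, closes this part. For the homogeneous part, I would use the telescoping decomposition $F\circ u = \sum_{j\geqslant 0} f_j$ with $f_j := F(\mathbf{P}_{j+1}u)-F(\mathbf{P}_j u)$, factored by the first-order Taylor formula as $f_j = m_j \Delta_j u$ where $m_j := \int_0^1 F'((1-\theta)\mathbf{P}_j u + \theta \mathbf{P}_{j+1} u)\,\mathrm{d}\theta$. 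The hypothesis then furnishes the pointwise bound $|m_j| \leqslant \|\mu\|_{L^1}(G(\mathbf{P}_j u) + G(\mathbf{P}_{j+1} u))$. Since the $f_j$ are spectrally supported in a dyadic annulus of size $\simeq 2^j$, we have $\Delta_k f_j\equiv 0$ for $|j-k|\geqslant 2$, so Cauchy-Schwarz in the remaining finite $j$-sum followed by Hölder in the spatial variable with exponents $p_1/p$ and $p_0/p$ yield
\[\left\|\left(\sum_k 2^{2ks}|\Delta_k(F\circ u)|^2\right)^{1/2}\right\|_{L^p} \lesssim \left\|\sup_j G(\mathbf{P}_j u)\right\|_{L^{p_1}}\left\|\left(\sum_j 2^{2js}|\Delta_j u|^2\right)^{1/2}\right\|_{L^{p_0}},\]
the second factor being comparable to $\|u\|_{W^{s,p_0}}$. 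The first is controlled by $\|G\circ u\|_{L^{p_1}}$ \textit{via} the pointwise majoration $G(\mathbf{P}_j u)\lesssim M(G\circ u)$, valid for the $G$ of interest in this paper (\textit{e.g.} $G(x)=|x|^{p-1}$, because $|\mathbf{P}_j u|\lesssim Mu$ and $G$ is monotone), together with the $L^{p_1}$-boundedness of the Hardy-Littlewood maximal function $M$.

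The main obstacle is precisely this last step: one cannot \textit{a priori} commute $G$ with the frequency projector $\mathbf{P}_j$, and obtaining a uniform (in $j$) $L^{p_1}$-bound for $G(\mathbf{P}_j u)$ by $\|G\circ u\|_{L^{p_1}}$ requires either a monotonicity or subadditivity property of $G$, or an abstract structural hypothesis on the pair $(F,G)$ implicit in Taylor's formulation. A secondary technicality is the commutation of the $j$-summation with the spatial $L^{p_1}$-norm inside the Hölder step, which, should the simpler pointwise $\sup_j$-bound prove insufficient, is handled by Fefferman-Stein's vector-valued maximal inequality.
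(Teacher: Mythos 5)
The paper does not actually prove Theorem~\ref{1sobolevChain}: it cites it from~\cite{taylor} without proof. So the relevant comparison is with the standard Littlewood--Paley argument, and in particular with the paper's own proof of the analogous Besov chain rule, Lemma~\ref{1chainRule}. Your telescoping decomposition $F\circ u=\sum_j f_j$, the first-order Taylor factorization $f_j=m_j\Delta_j u$, and the spatial H\"older split are exactly the route taken there; the only real divergence is that you pass through the square-function characterization of $W^{s,p}$ and the Hardy--Littlewood maximal function, whereas the paper's Besov proof works directly with the Bernstein-type bound $\|\mathbf{P}_j u\|_{L^q}\lesssim\|u\|_{L^q}$ and then invokes the homogeneity of $G(x)=|x|^{p-1}$ to turn $\|G(\mathbf{P}_j u)\|_{L^{q_2}}$ into $\|\mathbf{P}_j u\|_{L^{q_2(p-1)}}^{p-1}$. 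Your route buys slightly more flexibility (the Fefferman--Stein inequality would let you avoid the crude $\sup_j$), at the cost of one extra classical tool.

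The gap you flag is genuine and worth stating plainly: as written, the hypothesis ``$G>0$'' is insufficient. Two things go wrong in the abstract setting. First, your $L^p$ step produces the term $\|G\circ u + G(0)\|_{L^{p_1}}$, which is infinite on $\mathbb{R}^d$ unless $G(0)=0$; the statement silently assumes this. Second, and more substantially, the square-function step needs a uniform-in-$j$ bound $\|G(\mathbf{P}_j u)\|_{L^{p_1}}\lesssim\|G\circ u\|_{L^{p_1}}$, which is simply false for an arbitrary positive $G$. For the power-type $G$ used in this paper the step is recoverable, but your pointwise claim $G(\mathbf{P}_j u)\lesssim M(G\circ u)$ should be justified more carefully than by ``monotonicity'': write $|\mathbf{P}_j u|\leqslant C\,(|\varphi_j|/\||\varphi_j|\|_{L^1})*|u|$, use convexity of $t\mapsto t^{p-1}$ and Jensen to get $G(|\mathbf{P}_j u|/C)\leqslant (|\varphi_j|/\||\varphi_j|\|_{L^1})*G(|u|)\lesssim M(G\circ u)$, then undo the normalization by homogeneity of $G$. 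Alternatively, bypass the maximal function entirely and imitate the paper's Lemma~\ref{1chainRule} proof: bound $\|G(\mathbf{P}_j u)\|_{L^{p_1}}=\|\mathbf{P}_j u\|_{L^{p_1(p-1)}}^{p-1}\lesssim\|u\|_{L^{p_1(p-1)}}^{p-1}=\|G\circ u\|_{L^{p_1}}$. Either way, the fix uses that $G$ is a convex, homogeneous function of $|u|$ --- structure that is present in the application but absent from the bare hypothesis ``$G>0$.''
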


In order to derive product laws in Sobolev or Besov spaces, recall the paraproduct algorithm introduced by J.M. Bony, which consists in writing for two given functions $a,b$: \[ ab=T_ab+T_ba+R(a,b)=\dot{T}_ab+\dot{T}_ba+\dot{R}(a,b)\] where \[ T_ab:=\sum_{j}\mathbf{P}_{j-1}a\Delta_jb \text{ and } R(a,b):=\sum_{|k-j| \leqslant 1} \Delta _k a \Delta_j b \,,\] and \[\dot{T}_ab:=\sum_{j}\dot{\mathbf{P}}_{j-1}a\dot{\Delta}_jb \text{ and } \dot{R}(a,b):=\sum_{|k-j| \leqslant 1} \dot{\Delta} _k a \dot{\Delta}_j b \,,\] respectively denote nonhomogeneous paraproduct and remainder (resp. homogeneous). 

\begin{theorem}[Tame estimates, \cite{bahouriCheminDanchin}]\label{1productBesov} Let $s>0$, $p,r \in [1,\infty]$, and $p_1,p_2 \in [1, \infty]$ satisfying $\frac{1}{p_1}+\frac{1}{p_2}=\frac{1}{p}$. Then: 
\begin{enumerate}[label=(\textit{\roman*})]
    \item $\|\dot{T}_u v\|_{\dot B^s_{p,r}} \lesssim \|u\|_{L^{p_1}}\|v\|_{\dot B^{s}_{p_2,r}}$.
    \item $\|\dot R(u,v)\|_{\dot B^s_{p,r}} \lesssim \|u\|_{L^{p_1}}\|v\|_{\dot B^{s}_{p_2,r}}$.
\end{enumerate}
\end{theorem}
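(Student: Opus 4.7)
The plan is to apply Bony's paraproduct strategy: for each dyadic piece of $\dot T_u v$ and $\dot R(u,v)$ I would first identify its Fourier spectrum, then control its $L^p$-norm via Hölder's inequality, and finally recover the Besov norm by a weighted $\ell^r$ summation in $q$ (where $\dot\Delta_q$ is applied). The $L^p$-boundedness of each $\dot{\mathbf{P}}_j$ and $\dot\Delta_j$ (uniform in $j$), which follows from Young's convolution inequality applied to rescaled $L^1$ kernels, will be used freely.

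\textbf{Paraproduct estimate (i).} For each $j$, the function $\dot{\mathbf{P}}_{j-1}u\cdot\dot\Delta_j v$ has Fourier spectrum in a dyadic annulus $\{|\zeta|\sim 2^j\}$: indeed, $\dot{\mathbf{P}}_{j-1}u$ has frequencies in $\{|\xi|\lesssim 2^{j-1}\}$ while $\dot\Delta_j v$ is supported in $2^j\mathcal{C}$, and the two scales are separated. Hence there is a universal $N_0$ such that $\dot\Delta_q(\dot{\mathbf{P}}_{j-1}u\cdot\dot\Delta_j v)=0$ whenever $|q-j|>N_0$. Combining Hölder with the uniform $L^{p_1}$-boundedness of $\dot{\mathbf{P}}_{j-1}$ gives
$$\|\dot\Delta_q(\dot T_u v)\|_{L^p}\lesssim \|u\|_{L^{p_1}}\sum_{|j-q|\leqslant N_0}\|\dot\Delta_j v\|_{L^{p_2}}.$$
Multiplying by $2^{qs}$ and taking the $\ell^r_q$-norm yields (i) via the triangle inequality, as the residual convolution in $j$ is a finite sum.

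\textbf{Remainder estimate (ii).} Writing $\dot R(u,v)=\sum_{j}R_j$ with $R_j:=\sum_{|k-j|\leqslant 1}\dot\Delta_k u\cdot\dot\Delta_j v$, the two factors are now on comparable scales, so the cancellation used in (i) fails: $R_j$ only has Fourier support in a \emph{ball} of radius $\sim 2^j$. Therefore $\dot\Delta_q R_j=0$ only for $j\geqslant q-N_0$, an open-ended range. Hölder together with $\|\dot\Delta_k u\|_{L^{p_1}}\lesssim\|u\|_{L^{p_1}}$ gives $\|R_j\|_{L^p}\lesssim\|u\|_{L^{p_1}}\|\dot\Delta_j v\|_{L^{p_2}}$, hence
$$2^{qs}\|\dot\Delta_q \dot R(u,v)\|_{L^p}\lesssim \|u\|_{L^{p_1}}\sum_{j\geqslant q-N_0} 2^{(q-j)s}\bigl(2^{js}\|\dot\Delta_j v\|_{L^{p_2}}\bigr).$$
Since $s>0$, the kernel $(2^{ms})_{m\leqslant N_0}$ (extended by zero) lies in $\ell^1(\mathbb{Z})$, so Young's convolution inequality $\ell^1\ast\ell^r\hookrightarrow\ell^r$ transfers the $\ell^r_j$-summability of $\bigl(2^{js}\|\dot\Delta_j v\|_{L^{p_2}}\bigr)$ to the analogous bound on $\dot R(u,v)$, proving (ii).

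The main obstacle is the remainder estimate: unlike in (i), spectral localization only produces an open-ended summation, so summability has to be recovered from the weight $2^{(q-j)s}$. This is exactly where the hypothesis $s>0$ enters in a non-negotiable way, since it is what makes the geometric series $\sum_{m\geqslant 0}2^{-ms}$ converge; if $s\leqslant 0$, the tame estimate (ii) fails in general. The paraproduct estimate (i), by contrast, is essentially bookkeeping once the annular localization is observed.
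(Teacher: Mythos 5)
Your proof is correct and is precisely the standard argument one finds in Bahouri--Chemin--Danchin, which the paper cites for this result without reproducing the proof; so there is no distinct "paper proof" to compare against. The spectral localization of $\dot{\mathbf{P}}_{j-1}u\,\dot\Delta_j v$ in an annulus $\sim 2^j$ for the paraproduct, versus a ball $\sim 2^j$ for the remainder, followed by Hölder, uniform $L^p$-boundedness of the filters, and Young's $\ell^1\ast\ell^r\hookrightarrow\ell^r$ for the one-sided tail, is exactly the mechanism used there. One small slip of phrasing: you write "$\dot\Delta_q R_j=0$ only for $j\geqslant q-N_0$" when you clearly mean the opposite inclusion, namely that $\dot\Delta_q R_j$ can be nonzero \emph{only} when $j\geqslant q-N_0$ (equivalently, $\dot\Delta_q R_j=0$ for $j<q-N_0$); the display that follows makes your intent unambiguous, but the sentence as written reverses the condition.
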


From this one infers the next inequality which is important in our analysis:

\begin{corollary}\label{1coroProductRule} Let $s>0$, $p,r \in [1,\infty]$, and $p_1,p_2,p'_1,p'_2 \in [1, \infty]$ satisfying \[\frac{1}{p_1}+\frac{1}{p_1'}=\frac{1}{p_2}+\frac{1}{p_2'}=\frac{1}{p}\,.\] 
Then: 
\[\|fg\|_{\dot B^{s}_{p,r}} \lesssim \|f\|_{\dot B^{s}_{p_1,r}} \|g\|_{L^{p_1'}} + \|f\|_{L^{p_2}} \|g\|_{\dot B^{s}_{p_2',r}}\,.\] 
\end{corollary}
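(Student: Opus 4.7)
The plan is to apply Bony's paraproduct decomposition
\[
fg = \dot{T}_f g + \dot{T}_g f + \dot{R}(f,g)
\]
and then control each of the three pieces using the tame estimates from Theorem~\ref{1productBesov}. The proof is essentially bookkeeping: matching the available Hölder exponents to the two possible splittings $(p_1,p_1')$ and $(p_2,p_2')$.

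First I would handle the two paraproduct terms. For $\dot{T}_g f$, Theorem~\ref{1productBesov}~(i) applied with the exponent pair $(p_1',p_1)$ (the low-frequency factor in $L^{p_1'}$, the high-frequency factor in $\dot B^s_{p_1,r}$) gives
\[
\|\dot{T}_g f\|_{\dot B^s_{p,r}} \lesssim \|g\|_{L^{p_1'}} \|f\|_{\dot B^s_{p_1,r}},
\]
which is the first term on the right-hand side of the claimed inequality. Symmetrically, for $\dot{T}_f g$ I would apply Theorem~\ref{1productBesov}~(i) with the pair $(p_2,p_2')$ to obtain
\[
\|\dot{T}_f g\|_{\dot B^s_{p,r}} \lesssim \|f\|_{L^{p_2}} \|g\|_{\dot B^s_{p_2',r}},
\]
matching the second term.

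Next I would estimate the remainder $\dot{R}(f,g)$. Since $\dot R$ is symmetric in its arguments, I can apply Theorem~\ref{1productBesov}~(ii) with either Hölder pair; choosing $(p_2,p_2')$ yields
\[
\|\dot R(f,g)\|_{\dot B^s_{p,r}} \lesssim \|f\|_{L^{p_2}}\|g\|_{\dot B^s_{p_2',r}},
\]
which is again absorbed by the second term of the target estimate. (Alternatively, using the pair $(p_1',p_1)$ gives the same bound with the first term on the right-hand side.) Summing the three contributions gives the desired inequality.

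The only point deserving care, and what I would flag as the main (minor) obstacle, is making sure the hypothesis $s>0$ is available: the tame estimate for $\dot{T}_f g$ as stated in Theorem~\ref{1productBesov}~(i) does not require any restriction on $s$, but the estimate for the remainder $\dot R$ in~(ii) requires $s>0$ for convergence of the dyadic sum, which is exactly the hypothesis of the corollary. No further regularity assumption on $f,g$ is needed because the paraproduct decomposition is valid in the space of tempered distributions modulo polynomials, and the finiteness of the right-hand side guarantees that each piece lies in $\dot B^s_{p,r}$.
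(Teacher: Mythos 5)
Your proof is correct and is exactly the argument the paper leaves implicit: the paper simply states ``from this one infers'' after Theorem~\ref{1productBesov} without writing out the proof, and the expected route is precisely Bony's decomposition $fg=\dot T_f g+\dot T_g f+\dot R(f,g)$ followed by applying (i) to each paraproduct with the appropriate Hölder pair and (ii) to the remainder. Your remark that $s>0$ is needed only for the remainder term is also accurate.
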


\bibliographystyle{alpha}
\bibliography{biblio}

\begin{thebibliography}{BOP15b}

\bibitem[BB14a]{bourgainBulut1}
Jean Bourgain and Aynur Bulut.
\newblock Almost sure global well posedness for the radial nonlinear
  schrödinger equation on the unit ball i: the 2d case.
\newblock {\em Ann. Inst. H. Poincaré Anal. Non Linéaire}, 31:1267–1288,
  2014.

\bibitem[BB14b]{bourgainBulut2}
Jean Bourgain and Aynur Bulut.
\newblock Almost sure global well posedness for the radial nonlinear
  schrödinger equation on the unit ball ii: the 3d case.
\newblock {\em J. Eur. Math. Soc. (JEMS)}, 16:1289–1325, 2014.

\bibitem[BCD11]{bahouriCheminDanchin}
H.~Bahouri, J.Y. Chemin, and R.~Danchin.
\newblock {\em Fourier Analysis and Nonlinear Partial Differential Equations}.
\newblock Grundlehren der mathematischen Wissenschaften. Springer Berlin
  Heidelberg, 2011.

\bibitem[BF13]{boyer}
Franck Boyer and Pierre Fabrie.
\newblock {\em Mathematical Tools for the Study of the Incompressible
  Navier-Stokes Equations and Related Models}.
\newblock Applied Mathematical Science. Springer-Verlag, 2013.

\bibitem[BOP15a]{benyiOhPocovnicu}
\'Arp\'ad Bényi, Tadahiro Oh, and Oana Pocovnicu.
\newblock Wiener randomization on unbounded domains and an application to
  almost sure local well-posedness of nls.
\newblock {\em Excursions in Harmonic Analysis}, 4:3--25, 2015.

\bibitem[BOP15b]{benyiOhPocovnicu2}
\'Arpad Bényi, Tadiharo Oh, and Oana Pocovnicu.
\newblock On the probabilistic cauchy theory of the cubic nonlinear
  schrödinger equation on $\mathbb{R}^d$, $d \ge 3$.
\newblock {\em Trans. Amer. Math. Soc. Ser. B}, 2:1--50, 2015.

\bibitem[Bou96]{bourgain}
J.~Bourgain.
\newblock Invariant measures for the 2d-defocusing nonlinear schrödinger
  equation.
\newblock {\em Comm. Math. Phys.}, 176:421--445, 1996.

\bibitem[BT08a]{burqTzvetkov2}
Nicolas Burq and Nikolay Tzvetkov.
\newblock Random data cauchy theory for supercritical wave equations i: Local
  theory.
\newblock {\em Inventiones Mathematicae}, 173:449--475, 2008.

\bibitem[BT08b]{burqTzvetkov3}
Nicolas Burq and Nikolay Tzvetkov.
\newblock Random data cauchy theory for supercritical wave equations ii: A
  global result.
\newblock {\em Inventiones Mathematicae}, 173:477--496, 2008.

\bibitem[BT14]{burqTzvetkov}
Nicolas Burq and Nikolay Tzvetkov.
\newblock Probabilistic well-posedness for the cubic wave equation.
\newblock {\em J. Eur. Math. Soc.}, 16(1):1--30, 2014.

\bibitem[BTT15]{burq}
Nicolas Burq, Laurent Thomann, and Nikolay Tzvetkov.
\newblock Global infinite energy solutions for the cubic wave equation.
\newblock {\em Bulletin de la Société Mathématique de France},
  143(2):301--313, 2015.

\bibitem[Hö65]{hormander}
L.~Hörmander.
\newblock Pseudo-differential operators and hypoelliptic equations.
\newblock {\em Proc. Symp. Pure math}, X:138--183, 1965.

\bibitem[LM14]{lurmanMendelson}
Jonas Lührmann and Dana Mendelson.
\newblock Random data cauchy theory for nonlinear wave equations of power-type
  on $\mathbb{R}^3$.
\newblock {\em Comm. Partial Differential Equations}, 39:2262--2283, 2014.

\bibitem[LM16]{lurmannMenselsonBis}
Jonas Lührmann and Dana Mendelson.
\newblock On the almost sure global well-posedness of energy sub-critical
  nonlinear wave equations on $\mathbb{R}^3$.
\newblock {\em New York J. Math.}, 22:209--227, 2016.

\bibitem[NPS13]{nahmodPavlovicStaffilani}
A.R. Nahmod, N.~Pavlovi\'c, and G.~Staffilani.
\newblock Almost sure existence of global weak solutions for supercritical
  navier-stokes equations.
\newblock {\em SIAM J. Math. Anal.}, 45:3431--3542, 2013.

\bibitem[OOP17]{okamotoOhPocovnicu}
M.~Okamoto, Tadiharo Oh, and Oana Pocovnicu.
\newblock On the probabilistic well-posedness of the nonlinear schrödinger
  equation with non-algebraic nonlinearities.
\newblock {\em arXiv:1708.01568}, 2017.

\bibitem[OP16]{ohPocovnicu}
Tadahiro Oh and Oana Pocovnicu.
\newblock Probabilistic global well-posedness of the energy-critical defocusing
  quintic nonlinear wave equation on $\mathbb{R}^3$.
\newblock {\em Journal de Mathématiques Pures et Appliquées}, 105:342--366,
  2016.

\bibitem[OP17]{ohPocovnicu2}
Tadahiro Oh and Oana Pocovnicu.
\newblock A remark on almost sure global well-posedness of the energy-critical
  defocusing nonlinear wave equations in the periodic setting.
\newblock {\em Tohoku Math. J.}, 69(3):455--481, 2017.

\bibitem[Poc17]{pocovnicu}
Oana Pocovnicu.
\newblock Almost sure global well-posedness for the energy-critical defocusing
  nonlinear wave equation on $\mathbb{R}^d$, $d=4,5$.
\newblock {\em J. Eur. Math. Soc.}, 19:2321--2375, 2017.

\bibitem[Str70]{strauss}
Walter~A. Strauss.
\newblock On weak solutions of semi-linear hyperbolic equations.
\newblock {\em Anais. Acad. Brasil. Cienc.}, 42:645--651, 1970.

\bibitem[SX16]{sunXia}
Chenmin Sun and Bo~Xia.
\newblock Probabilistic well-posedness for supercritical wave equations with
  periodic boundary condition on dimension three.
\newblock {\em Illinois J. Math.}, 60(2):481--503, 2016.

\bibitem[Tay07]{taylor}
M.E. Taylor.
\newblock {\em Tools for PDE: Pseudodifferential Operators, Paradifferential
  Operators, and Layer Potentials}.
\newblock Mathematical surveys and monographs. American Mathematical Society,
  2007.

\bibitem[Wie32]{wiener}
Norbert Wiener.
\newblock Tauberian theorems.
\newblock {\em Annals of Mathematics}, 33:1--100, 1932.

\bibitem[Wol33]{wolibner}
W.~Wolibner.
\newblock Un théorème sur l’existence du mouvement plan d’un fluide
  parfait, homogène, incompressible, pendant un temps infiniment long.
\newblock {\em Math. Z.}, 37:698--726, 1933.

\bibitem[Yud63]{yudovich}
V.I. Yudovich.
\newblock Non-stationary flows of an ideal incompressible fluid.
\newblock {\em \u{Z} Vy\u{c}isl. Mat. i Mat. Fiz.}, 3:1032--1066, 1963.

\end{thebibliography}
\end{document}